\documentclass[11pt]{amsart}

\headheight=7pt         \topmargin=14pt
\textheight=574pt       \textwidth=445pt
\oddsidemargin=18pt     \evensidemargin=18pt 

 \setlength{\parskip}{4pt}
\usepackage{amsmath,amssymb,amsthm,amsfonts,amscd,flafter,
graphicx,verbatim,pinlabel,mathrsfs}
\usepackage[all]{xy}
\usepackage{epstopdf}
\epstopdfsetup{suffix=}
\usepackage[colorlinks=false]{hyperref}
\usepackage[all]{hypcap}

\usepackage{collectbox}

\makeatletter

\makeatother

\title{On the equivalence of contact invariants in sutured Floer homology theories}

\author[John A. Baldwin]{John A. Baldwin}
\address{Department of Mathematics \\ Boston College}
\email{john.baldwin@bc.edu}

\author[Steven Sivek]{Steven Sivek}
\address{Department of Mathematics \\ Imperial College London}
\email{s.sivek@imperial.ac.uk}

\thanks{JAB was  supported by NSF Grants DMS-1104688, DMS-1406383, and NSF CAREER Grant DMS-1454865. }
\thanks{SS was supported by  NSF Grants DMS-1204387 and DMS-1506157.}

\def\D{{\mathbb{D}}}
\def\T{{\mathbb{T}}}

\def\Q{{\mathbb{Q}}}
\def\P{{\mathbb{P}}}

\newcommand\ob{\mathfrak{ob}}
\newcommand\bx{\mathbf{x}}

\newcommand\by{\mathbf{y}}

\newcommand\bcc{\mathbf{c}}

\newcommand\sfh{\mathit{SFH}}
\newcommand\sfc{\mathit{SFC}}
\newcommand\hfp{\mathit{HF}^+}

\newcommand\cfp{\mathit{CF}^+}

\newcommand\zz{\mathbb{Z}}
\newcommand\zzt{\mathbb{Z}/2\mathbb{Z}}

\newcommand\Sc{\text{Spin}^c}
\newcommand\SA{\text{A}}
\newcommand\spc{\mathfrak{s}}
\newcommand\spt{\mathfrak{t}}
\newcommand\Ta{\mathbb{T}_{\alpha}}
\newcommand\Tb{\mathbb{T}_{\beta}}
\newcommand\Tc{\mathbb{T}_{\gamma}}

\newcommand\ul{\underline}
\newcommand\ba{\boldsymbol{\alpha}}
\newcommand\bb{\boldsymbol{\beta}}
\newcommand\bc{\boldsymbol{\gamma}}

\newcommand\ssm{\smallsetminus}

\newcommand\HFKh{\widehat{\mathit{HFK}}}

\newcommand\SHM{\mathit{SHM}}
\newcommand\SHMt{\underline{\SHM}}
\newcommand\KHMt{\underline{\mathit{KHM}}}
\newcommand\SFH{\mathit{SFH}}
\newcommand\HMtoc{\HMto_{\bullet}}
\newcommand\HF{\mathit{HF}}
\newcommand\HM{\mathit{HM}}


\newcommand\maxtb{\overline{tb}}
\newcommand\maxsl{\overline{sl}}

    \RequirePackage{rotating}                   
    \def\HMto{%
       \setbox0=\hbox{$\widehat{\mathit{HM}}$}
       \setbox1=\hbox{$\mathit{HM}$}
       \dimen0=1.1\ht0
       \advance\dimen0 by 1.17\ht1
       \smash{\mskip2mu\raise\dimen0\rlap{%
          \begin{turn}{180}
              {$\widehat{\phantom{\mathit{HM}}}$}
           \end{turn}} \mskip-2mu    
                \mathit{HM}
    }{\vphantom{\widehat{\mathit{HM}}}}{}}

\newtheorem{theorem}{Theorem}[section]
\newtheorem{lemma}[theorem]{Lemma}

\newtheorem{corollary}[theorem]{Corollary}
\newtheorem{proposition}[theorem]{Proposition}

\theoremstyle{definition}

\newtheorem{remark}[theorem]{Remark}

\begin{document}
\begin{abstract} 
We recently defined an invariant of contact manifolds with convex boundary in Kronheimer and Mrowka's sutured monopole Floer homology theory. Here, we prove that there is an isomorphism between sutured monopole Floer homology and sutured Heegaard Floer homology which identifies our invariant with the contact class defined by Honda, Kazez and Mati{\'c} in the latter theory.  One consequence is that the Legendrian invariants in knot Floer homology behave functorially with respect to Lagrangian concordance. In particular, these invariants provide computable and effective obstructions to the existence of such concordances. Our work also provides the first proof which does not rely on Giroux's correspondence that  Honda, Kazez and Mati{\'c}'s contact class is well-defined up to isomorphism.
\end{abstract}

\maketitle

\section{Introduction}
\label{sec:intro}

The purpose of this article is to establish an equivalence between two invariants of contact 3-manifolds with boundary---one defined using Heegaard Floer homology and the other using monopole Floer homology. Our equivalence fits naturally into the  ongoing program of establishing connections between the many different instantiations of Floer theory. In addition to  the  theoretical appeal of  such connections,   an equivalence between invariants in different Floer homological  settings allows one to combine  the intrinsic advantages of  the  different settings, often with interesting topological or geometric consequences. This principle is illustrated nicely by Taubes's isomorphism between monopole Floer homology and embedded contact homology \cite{taubes1,taubes2,taubes3,taubes4,taubes5}, whose first step, a correspondence between monopoles and Reeb orbit sets, proved the Weinstein conjecture for 3-manifolds \cite{taubeswc}. 

Our work provides another  illustration of this principle.
One of the primary advantages of Heegaard Floer homology is its computability. On the other hand, monopole Floer homology enjoys a certain functoriality  with respect to exact symplectic cobordism which has yet to be proven in the Heegaard Floer setting. The equivalence described in this paper enables us to combine these advantages to give a new, computable obstruction to the existence of Lagrangian concordances between Legendrian knots, a subject of much recent interest. Another application of our equivalence is a Giroux-correspondence-free proof that the contact invariant in sutured Heegaard Floer homology is well-defined up to isomorphism. 

Below, we describe our equivalence and its applications in more detail. We then outline the proof. We work in characteristic 2 throughout this paper.

\subsection{Our equivalence}

Let us first recall  the   invariants of \emph{closed} contact 3-manifolds defined by Kronheimer and Mrowka and by Ozsv{\'a}th and Szab{\'o}. Suppose $(Y,\xi)$ is a closed contact 3-manifold, and $\eta$ is an oriented curve in $Y$. In \cite{km,kmosz}, Kronheimer and Mrowka associate  to such data a class \[c_{\HM}(\xi)\in\HMtoc(-Y,\spc_\xi;\Gamma_{\eta})\footnote{In \cite{km, kmosz}, this class is denoted by $\psi$.}\] in the monopole Floer homology of $-Y$ with local coefficients. Here, $\spc_\xi$ is the $\Sc$ structure on $-Y$ associated with $\xi$, and $\Gamma_{\eta}$ is a local  system on the  monopole Floer configuration space with fiber a Novikov ring $\Lambda$. In \cite{osz1}, Ozsv{\'a}th and Szab{\'o}   likewise define a class \[c_{\HF}(\xi)\in\hfp(-Y,\spc_\xi;\Gamma_{\eta})\] in  Heegaard Floer homology with local coefficients, but by very different means. Remarkably, these two invariants are equivalent. This is made precise in the theorem below, which follows from Taubes' work \cite{taubes1,taubes2,taubes3,taubes4,taubes5} together with work of Colin, Ghiggini, and Honda \cite{cgh3,cgh4,cgh5} on the isomorphism between Heegaard Floer homology and embedded contact homology.

\begin{theorem}[Taubes, Colin--Ghiggini--Honda]
\label{thm:closedequiv}
For every $\spc\in\Sc({-}Y)$, there is an isomorphism of $\Lambda[U]$-modules
\[\Phi_\spc:\hfp({-}Y,\spc;\Gamma_{\eta})\to\HMtoc(-Y,\spc;\Gamma_{\eta}),\]
such that $\Phi_{\spc_\xi}(c_{\HF}(\xi))=c_{\HM}(\xi)$.
\end{theorem}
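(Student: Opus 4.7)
The plan is to realize $\Phi_\spc$ as a composition of two established isomorphisms that factor through embedded contact homology $ECH$:
\[
\hfp(-Y,\spc;\Gamma_\eta) \xrightarrow{\Phi^{HF}_\spc} ECH_*(Y,\spc;\Gamma_\eta) \xrightarrow{\Phi^{HM}_\spc} \HMtoc(-Y,\spc;\Gamma_\eta).
\]
Here $\Phi^{HM}_\spc$ is Taubes's isomorphism from the series \cite{taubes1,taubes2,taubes3,taubes4,taubes5}, which identifies the ECH chain complex of a contact form $\alpha$ with $\ker\alpha=\xi$ with a perturbed Seiberg--Witten chain complex on $-Y$ in a manner that respects the $\Lambda[U]$-module structure and the $\Sc$-decomposition. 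The map $\Phi^{HF}_\spc$ is the Heegaard Floer to ECH isomorphism of Colin--Ghiggini--Honda \cite{cgh3,cgh4,cgh5} (with an alternative construction available through Kutluhan--Lee--Taubes).

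The first step is to invoke the contact class identification on each factor. On the monopole side, Taubes shows that under $\Phi^{HM}_\spc$, the distinguished ECH generator $[\emptyset]$ (the empty orbit set for a contact form defining $\xi$) is sent to the Kronheimer--Mrowka invariant $c_{HM}(\xi)$; this is proven by analyzing the Seiberg--Witten solutions in the limit of the large-$r$ perturbation used to set up the isomorphism. On the Heegaard Floer side, Colin--Ghiggini--Honda show that $\Phi^{HF}_{\spc_\xi}$ sends the Ozsv\'ath--Szab\'o contact class $c_{HF}(\xi)$ to $[\emptyset]$; this uses their open book realization of $HF^+$ by a suitably adapted Heegaard diagram matching the ECH picture for a contact form compatible with the open book. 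Composing, $\Phi_\spc := \Phi^{HM}_\spc \circ \Phi^{HF}_\spc$ carries $c_{HF}(\xi)$ to $c_{HM}(\xi)$ in the Spin$^c$ structure $\spc_\xi$, and is an isomorphism of $\Lambda[U]$-modules in every $\spc$.

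The second step is to promote both isomorphisms to the twisted setting with local system $\Gamma_\eta$. Fix a 1-cycle representative of $\eta\in H_1(Y;\zz)$ and use it to twist each chain complex by the Novikov module $\Lambda$, weighting differentials and chain maps by the (algebraic) intersection of $\eta$ with the relevant homology classes of flow lines / holomorphic curves / Seiberg--Witten trajectories. Because both $\Phi^{HM}_\spc$ and $\Phi^{HF}_\spc$ are constructed at the chain level from moduli spaces whose boundary contributions can be matched class-by-class, one verifies that the chain-level isomorphisms commute with the $\eta$-twisting, so they descend to isomorphisms on homology with coefficients in $\Gamma_\eta$. The contact class identifications then survive because the distinguished generators ($c_{HF}(\xi)$, $[\emptyset]$, $c_{HM}(\xi)$) are represented by single elements whose images in the twisted complex differ only by the unit of $\Lambda$.

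The main obstacle is not the composition itself but the bookkeeping needed to extend Taubes's and CGH's results to the twisted $\Gamma_\eta$ setting; while the general recipe for inserting Novikov-type local coefficients into each chain complex is routine, one must verify that the specific cobordism maps, continuation maps, and pseudoholomorphic-curve counts used in the original proofs remain compatible after twisting, and in particular that the identifications of the distinguished generators are preserved. Once this is checked, the theorem follows immediately from the composition above.
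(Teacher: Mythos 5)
The paper does not actually prove Theorem~\ref{thm:closedequiv}; it is stated as a black box attributed to Taubes and Colin--Ghiggini--Honda, with the paper remarking only that it ``follows from'' the cited works \cite{taubes1,taubes2,taubes3,taubes4,taubes5} and \cite{cgh3,cgh4,cgh5} (with Kutluhan--Lee--Taubes noted as an alternative). Your outline is a faithful account of the route those references take: factor the isomorphism through embedded contact homology, use Taubes's identification of $ECH$ with monopole Floer homology on one side and Colin--Ghiggini--Honda's identification of $ECH$ with $\hfp$ on the other, and track the contact class through the distinguished empty-orbit-set generator $[\emptyset]$. So your approach matches the paper's (implicit) one.

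Two small cautions, neither of which indicates an error in your plan. First, the promotion to the local-coefficient system $\Gamma_\eta$ is the least standard part; you correctly flag it as ``bookkeeping'' to be verified rather than as something already written down in full detail, and the paper likewise does not spell it out. Second, the identification $[\emptyset]\mapsto c_{HM}(\xi)$ on the Taubes side and $c_{HF}(\xi)\mapsto[\emptyset]$ on the CGH side are both in the literature but are scattered across the cited series rather than concentrated in one place; a fully rigorous write-up would need to cite the precise statements. Since the paper simply imports the theorem, your proposal is as complete as the paper's own treatment, and more informative in that it makes the two-step factorization and the contact-class bookkeeping explicit.
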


\begin{remark}
 Kutluhan, Lee, and Taubes also proved this isomorphism \cite{klt1,klt2,klt3,klt4,klt5}.  We need to specifically use the Taubes and Colin-Ghiggini-Honda isomorphisms in this work, however, because they identify the contact invariants $c_{\HF}(\xi)$ and $c_{\HM}(\xi)$ while Kutluhan-Lee-Taubes do not.
\end{remark}

This article sets out to establish a similar equivalence for invariants of contact 3-manifolds \emph{with  boundary}, or, more precisely, what we call \emph{sutured contact manifolds}. These are triples $(M,\Gamma,\xi)$ where $(M,\xi)$ is a contact 3-manifold with convex boundary and  dividing set $\Gamma\subset \partial M$. In \cite{hkm4}, Honda, Kazez, and Mati{\'c} associate to such data a class \[c_{\HF}(\xi)\in\SFH(-M,-\Gamma)\footnote{In \cite{hkm4}, this class is denoted by $EH$.}\] in the sutured Heegaard Floer homology of $(-M,-\Gamma)$ which, in a sense, generalizes Ozsv{\'a}th and Szab{\'o}'s invariant of closed contact manifolds (it generalizes the \emph{hat} version of Ozsv{\'a}th and Szab{\'o}'s invariant). In \cite{bsSHM}, we gave a similar generalization of Kronheimer and Mrowka's invariant. Ours assigns to a sutured contact manifold  a class 
\[c_{\HM}(\xi)\in\SHMt(-M,-\Gamma)\footnote{In \cite{bsSHM}, this class is denoted by $\psi$.}\] in a version of sutured monopole Floer homology with local coefficients.\footnote{In fact, our invariant can be made to take values in the ``natural" refinement of $\SHMt$  defined in \cite{bs3}.} Our main theorem is the following, settling a conjecture made in \cite[Conjecture 1.9]{bsSHM}.

\begin{theorem}[Main Theorem]
\label{thm:main}
There is an isomorphism of $\Lambda$-modules
\[\SFH(-M,-\Gamma)\otimes \Lambda\to\SHMt(-M,-\Gamma),\]
sending $c_{\HF}(\xi)\otimes 1$ to $c_{\HM}(\xi)$.
\end{theorem}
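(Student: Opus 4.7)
The plan is to reduce Theorem~\ref{thm:main} to the closed-manifold equivalence of Theorem~\ref{thm:closedequiv} via a closure construction. Given $(M,\Gamma,\xi)$, we form a closure $(Y,R,\bar\xi)$: a closed 3-manifold $Y$ containing a distinguished convex surface $R$ of genus $g\geq 2$, together with a contact structure $\bar\xi$ on $Y$ extending $\xi$ in which $R$ is convex with standard dividing set, and we fix a curve $\eta\subset R$ realizing the local coefficient system. By the definitions of the two sutured theories, $\SFH(-M,-\Gamma)$ is identified with the summand of $\hf(-Y)$ cut out by the $\Sc$ structures of maximal evaluation on $[R]$, and $\SHMt(-M,-\Gamma)$ with the corresponding summand of $\HMtoc(-Y;\Gamma_{\eta})$.

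The first key step is to verify that, under these identifications, the sutured contact classes $c_{HF}(\xi)$ and $c_{HM}(\xi)$ are the images of the closed contact classes $c_{HF}(\bar\xi)$ and $c_{HM}(\bar\xi)$ under the natural projections into these summands. On the monopole side, this is essentially how $c_{HM}(\xi)$ is defined in \cite{bsSHM}. On the Heegaard side, however, it is the principal novel input: HKM's definition of $c_{HF}(\xi)$ via a partial open book decomposition must be reconciled with the closed contact class of a suitable closure. The expected approach is to extend a partial open book of $(M,\Gamma,\xi)$ to an honest open book supporting $\bar\xi$ on $Y$, and to directly compare the intersection-point cycles representing the two classes on the associated Heegaard diagrams.

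Granting this compatibility, Theorem~\ref{thm:closedequiv} applied in each relevant $\Sc$ summand yields an isomorphism $\hfp(-Y\,|\,R;\Gamma_{\eta})\to\HMtoc(-Y\,|\,R;\Gamma_{\eta})$ identifying $c_{HF}(\bar\xi)$ with $c_{HM}(\bar\xi)$. To descend this to the $\hf$-level statement of Theorem~\ref{thm:main}, we use that $g(R)\geq 2$ forces $c_1(\spc)$ to be non-torsion on every contributing $\Sc$ structure, so $HF^{\infty}$ and $\overline{HM}$ both vanish; the $U$-action on each side is then nilpotent, and the exact triangle $\hf\to\hfp\xrightarrow{U}\hfp$ reconstructs $\hf$ from that $U$-action. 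The $\Lambda[U]$-linearity of Theorem~\ref{thm:closedequiv}, combined with a parallel analysis on the monopole side relating $\SHMt$ to the $U$-action on $\HMtoc$, transports the isomorphism to the desired $\Lambda$-module isomorphism $\SFH(-M,-\Gamma)\otimes\Lambda \to \SHMt(-M,-\Gamma)$ carrying $c_{HF}(\xi)\otimes 1$ to $c_{HM}(\xi)$.

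The main obstacle is the Heegaard-side compatibility in the first step above, since HKM's definition is not a priori related to any closure and requires a genuinely new comparison argument. A secondary technical issue is independence of the resulting isomorphism from all auxiliary choices (the closure $(Y,R)$, the curve $\eta$, etc.), which should follow from combining the naturality of Theorem~\ref{thm:closedequiv} under the cobordisms relating different closures with the corresponding naturality properties of $\SFH$ and $\SHMt$ established in the literature.
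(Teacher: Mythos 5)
Your overall reduction strategy is correct at the top level: form a contact closure $(Y,R,\bar\xi)$, apply Theorem~\ref{thm:closedequiv} in the bottom $\Sc$ structures, and transport the contact classes. And you correctly identify the HF-side compatibility (relating HKM's $c_{HF}(\xi)$ to the closed class $c_{HF}(\bar\xi)$) as the principal difficulty. But your proposed resolution of that difficulty --- ``extend a partial open book of $(M,\Gamma,\xi)$ to an honest open book supporting $\bar\xi$ on $Y$, and directly compare the intersection-point cycles'' --- is precisely the approach that does \emph{not} work. The pointed Heegaard diagram one obtains for a closure of a sutured diagram coming from a partial open book is not an open book diagram for $\bar\xi$, and there is no direct intersection-point comparison available; this is exactly the obstruction to using Lekili's identification of $\SFH$ with a summand of the closure's $\hf$. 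The actual proof circumvents this by using the topological reformulation of the contact class developed in the sutured monopole setting: $(M,\Gamma,\xi)$ is obtained from the product sutured manifold $H(S)$ by contact $2$-handle attachments along curves $s_1,\dots,s_n$, and a contact closure of $(M,\Gamma,\xi)$ is obtained by contact $(+1)$-surgery on the corresponding Legendrians in a contact closure $(Y_S,R,\bar\xi_S)$ of $H(S)$. One then has $c_{HF}(\bar\xi) = G(c_{HF}(\bar\xi_S))$, where $G$ is the $2$-handle cobordism map and $c_{HF}(\bar\xi_S)$ generates the rank-one module $\hfp(-Y_S|{-}R;\Gamma_\eta)$. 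The remaining work is to build an explicit chain map $f_\gamma\colon \sfc(-M,-\Gamma)\otimes\Lambda \to \cfp(\gamma,\alpha|R_\gamma;\Gamma_\eta)$ on a carefully chosen Heegaard triple diagram $(\ul\Sigma,\gamma,\beta,\alpha,z)$ encoding both closures and the cobordism, and to show, via heavy winding of the $\alpha$-curves and taking $g(R)$ large, that (i) $f_\gamma$ is a filtered quasi-isomorphism, and (ii) the holomorphic triangle counts defining $G$ on the relevant generators localize to the corresponding sutured triple diagram.

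A secondary issue is the ``descent'' paragraph. The paper does not pass through $\hf$ of the closure at all, and the exact triangle $\hf\to\hfp\xrightarrow{U}\hfp$ plays no role; $\SFH(-M,-\Gamma)\otimes\Lambda$ is compared directly with $\hfp(-Y|{-}R;\Gamma_\eta)$ via the chain map $f_\gamma$, whose construction and quasi-isomorphism property is one of the two main theorems of the section (Theorem~\ref{thm:iso}). The identification of $\SFH$ with a summand of the closure's Floer homology compatibly with the contact class is precisely the content that must be proved, not something that can be quoted; and the claim that ``$g(R)\geq 2$ forces $c_1(\spc)$ to be non-torsion on every contributing $\Sc$ structure'' needs the bottom-$\Sc$ structure condition $\langle c_1(\spc),[R]\rangle = 2-2g(R) \neq 0$, which is built into the closure framework. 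In short: your proposal correctly sets the stage but is missing the two central ideas --- the contact $(+1)$-surgery reformulation of the contact class, and the winding/large-genus localization of holomorphic curve counts --- that make the comparison tractable.
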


\subsection{Applications}

The main topological application of Theorem \ref{thm:main} discussed in this paper is to the study of Lagrangian concordance initiated by Chantraine in \cite{chantraine}. Recall that for Legendrian knots $K_1,K_2\subset (Y,\xi)$, $K_1$ is \emph{Lagrangian concordant} to $K_2$---written $K_1\prec K_2$---if there is an embedded Lagrangian cylinder $C$ in the symplectization of $(Y,\xi)$ such that \begin{align*}C\,\cap\, ((-\infty,-T)\times Y) &=  (-\infty,-T)\times K_1,\\
C\,\cap\, ((T,\infty)\times Y) &=  (T, \infty)\times K_2\end{align*}
 for some $T>0$. Two Legendrian knots related by Lagrangian concordance must have the same classical invariants (Thurston-Bennequin and rotation numbers) \cite{chantraine}. A challenging problem, therefore, which has attracted a lot of recent attention, is to find tools for deciding whether two knots with the same classical invariants are Lagrangian concordant. Note that this is more difficult than the already formidable task of deciding whether two smoothly isotopic knots with the same classical invariants are Legendrian isotopic, though many known Legendrian isotopy invariants are in fact Lagrangian concordance obstructions, see e.g.\ \cite{cdrgg-gokova}.

In \cite{bsLeg}, we defined a Legendrian invariant  which assigns to a Legendrian knot $K\subset (Y,\xi)$ a class \[\mathcal{L}_{\HM}(K)\in \KHMt(-Y,K)\] in  monopole knot Floer homology with local coefficients. It is defined in terms of a certain contact structure $\xi_K$ on the sutured knot complement $Y(K)$ with two meridional sutures,
\begin{equation}\label{eqn:leg}\mathcal{L}_{\HM}(K):=c_{\HM}(\xi_K)\in \SHMt(-Y(K))=:\KHMt(-Y,K).\end{equation}
Furthermore, we used the functoriality  of $c_{\HM}$ under exact symplectic cobordism  (a feature whose analogue  in Heegaard Floer homology has not been established independently of Theorem \ref{thm:closedequiv}) to show that $\mathcal{L}_{\HM}$ behaves functorially under Lagrangian concordance, as follows.

\begin{theorem}[Baldwin--Sivek]
\label{thm:functcon}
If $K_1,K_2\subset (Y,\xi)$ are Legendrian knots with $K_1\prec K_2$, then there is a map \[\KHMt(-Y,K_2)\to \KHMt(-Y,K_1),\] sending $\mathcal{L}_{\HM}(K_2)$ to $\mathcal{L}_{\HM}(K_1).$
\end{theorem}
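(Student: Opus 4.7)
The plan is to obtain the desired map by excising a Weinstein neighborhood of the Lagrangian concordance from the symplectization of $(Y,\xi)$ and then invoking the functoriality of $c_{HM}$ under exact symplectic cobordism in the sutured monopole setting.

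More precisely, given a concordance $C\subset (\R\times Y, d(e^t\alpha))$ from $K_1$ to $K_2$ as in Chantraine's definition, I would first truncate the symplectization to $[-T,T]\times Y$, so that $C$ is cylindrical over the Legendrians $K_1$ and $K_2$ near the two ends. Removing an open standard symplectic tubular neighborhood $N(C)$ of $C$ produces, after smoothing corners, an exact symplectic manifold $W = ([-T,T]\times Y)\setminus N(C)$. The cylindrical structure of $C$ near the ends guarantees that $W$ is an exact symplectic cobordism from the sutured contact manifold $(Y(K_2),\xi_{K_2})$ to $(Y(K_1),\xi_{K_1})$ used to define $\mathcal{L}_{HM}(K_2)$ and $\mathcal{L}_{HM}(K_1)$ via \eqref{eqn:leg}, with both sets of sutures consisting of two meridians on the corresponding knot complements.

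With this cobordism in hand, I would invoke the functoriality of the sutured monopole contact invariant under exact symplectic cobordism established in \cite{bsSHM}: such a cobordism induces a map $\SHMt(-Y(K_2))\to \SHMt(-Y(K_1))$ carrying $c_{HM}(\xi_{K_2})$ to $c_{HM}(\xi_{K_1})$. Rewriting both sides via \eqref{eqn:leg} and the definition of $\KHMt$ in terms of $\SHMt$ of the sutured knot complement yields exactly the claimed map $\KHMt(-Y,K_2)\to \KHMt(-Y,K_1)$ sending $\mathcal{L}_{HM}(K_2)\mapsto \mathcal{L}_{HM}(K_1)$.

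The principal obstacle is the geometric verification in the first step, namely identifying the induced contact structure on each boundary component of $W$ with the contact structure $\xi_{K_i}$ that goes into the definition of $\mathcal{L}_{HM}$. This requires choosing the Weinstein model neighborhood of $C$ compatibly with the Weinstein neighborhoods of the Legendrians $K_i$ at the two ends, and then checking that after smoothing corners the resulting convex boundary has precisely the two meridional dividing curves prescribed in the construction of $\mathcal{L}_{HM}$. Once this local identification is pinned down, the conclusion is a direct application of the abstract functoriality of $c_{HM}$ in the sutured monopole category.
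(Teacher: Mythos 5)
Your high-level strategy---excise a Weinstein neighborhood of the Lagrangian concordance $C$ from a truncated symplectization, recognize the complement as an exact symplectic cobordism between the sutured knot complements, and appeal to the functoriality of $c_{HM}$ under exact symplectic cobordism---is indeed the approach the paper attributes to \cite{bsLeg}, and the direction of the induced map ($K_2$ at the convex end mapping to $K_1$ at the concave end) comes out correctly.

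The gap is in the step ``invoke the functoriality of the sutured monopole contact invariant under exact symplectic cobordism established in \cite{bsSHM}.'' As the paper presents things, \cite{bsSHM} defines $c_{HM}(\xi)$ for a sutured contact manifold $(M,\Gamma,\xi)$ as the closed-manifold invariant $c_{HM}(\bar\xi)$ of a contact closure $(Y,R,\bar\xi)$, and proves well-definedness and functoriality under contact handle attachments; it does not appear to give a black-box functoriality theorem for arbitrary exact symplectic cobordisms between sutured contact manifolds. The remark following Theorem~\ref{thm:contactsurgery} states explicitly that Theorem~\ref{thm:functcon} was proved using the Hutchings--Taubes / Mrowka--Rollin functoriality of the \emph{closed} monopole contact invariant. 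Applying that here requires a further, substantive construction that your proposal skips: one must close up the sutured cobordism $W$ to an exact symplectic cobordism $\bar W$ between contact closures $(\bar Y_2,\bar\xi_2)$ and $(\bar Y_1,\bar\xi_1)$ of $(Y(K_2),\xi_{K_2})$ and $(Y(K_1),\xi_{K_1})$ (e.g.\ by gluing on a symplectization of the auxiliary $R\times[-1,1]$ piece compatibly with the contact forms), verify that the resulting $\bar\xi_i$ are the ones used in defining $\mathcal{L}_{HM}(K_i)=c_{HM}(\bar\xi_i)$, and then check that the induced map $\HMtoc(-\bar Y_2;\Gamma_\eta)\to\HMtoc(-\bar Y_1;\Gamma_\eta)$ preserves the bottom $\Sc$ structure summands with respect to $R$, so that it descends to a map $\SHMt(-Y(K_2))\to\SHMt(-Y(K_1))=\KHMt(-Y,K_1)$. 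That closing-up and $\Sc$-structure bookkeeping is the real content of the argument, and it is absent from your write-up; the local Weinstein-neighborhood verification you flag is genuine but is the easier of the two issues.
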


In this way, the class $\mathcal{L}_{\HM}$  provides an obstruction to the existence of Lagrangian concordances between Legendrian knots. Unfortunately, this class is not easily computable. A much more computable Legendrian invariant is that defined by Lisca, Ozsv{\'a}th, Stipsicz, and Szab{\'o} in \cite{lossz}. Theirs   takes the form of a class \[\mathcal{L}_{\HF}(K)\subset \HFKh(-Y,K)\] in Heegaard knot Floer homology. Though originally defined in terms of open books for $(Y,\xi)$, Stipsicz and V{\'e}rtesi discovered in \cite{sv} that it can also be formulated as
\[\mathcal{L}_{\HF}(K)=c_{\HF}(\xi_K)\in \SFH(-Y(K))=\HFKh(-Y,K).\] In fact, their work was the inspiration for our subsequent definition of $\mathcal{L}_{\HM}$. The equivalence below then follows  immediately from Theorem \ref{thm:main}.

\begin{theorem}
\label{thm:equivleg}
There is an isomorphism of $\Lambda$-modules
\[\HFKh(-Y,K)\otimes\Lambda\to\KHMt(-Y,K),\]
sending $\mathcal{L}_{\HF}(K)\otimes 1$ to $\mathcal{L}_{\HM}(K)$. \qed
\end{theorem}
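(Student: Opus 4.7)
The plan is to deduce this directly from Theorem \ref{thm:main} applied to a particular sutured contact manifold. Given a Legendrian knot $K\subset(Y,\xi)$, let $Y(K)$ denote the sutured knot complement equipped with two meridional sutures $\Gamma_\mu$, and let $\xi_K$ be the contact structure on $Y(K)$ introduced in \cite{bsLeg}. Theorem \ref{thm:main} applied to the sutured contact manifold $(Y(K),\Gamma_\mu,\xi_K)$ yields an isomorphism of $\Lambda$-modules
\[\SFH(-Y(K),-\Gamma_\mu)\otimes\Lambda\;\longrightarrow\;\SHMt(-Y(K),-\Gamma_\mu)\]
sending $c_{HF}(\xi_K)\otimes 1$ to $c_{HM}(\xi_K)$.

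The remaining step is purely definitional. By the definition of monopole knot Floer homology recalled in the excerpt, one has $\KHMt(-Y,K)=\SHMt(-Y(K))$, and the corresponding identification $\HFKh(-Y,K)=\SFH(-Y(K))$ in the Heegaard setting is standard. Under these identifications, Stipsicz and V\'ertesi's reformulation of the Lisca--Ozsv\'ath--Stipsicz--Szab\'o invariant reads $\mathcal{L}_{HF}(K)=c_{HF}(\xi_K)$, while the defining equation \eqref{eqn:leg} reads $\mathcal{L}_{HM}(K)=c_{HM}(\xi_K)$. Substituting these identities into the isomorphism produced by Theorem \ref{thm:main} gives exactly the desired isomorphism sending $\mathcal{L}_{HF}(K)\otimes 1$ to $\mathcal{L}_{HM}(K)$.

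Since Theorem \ref{thm:main} is the sole input and both invariants have already been expressed as contact classes of $\xi_K$ in sutured Floer homology, there is no genuine obstacle here; the whole content of Theorem \ref{thm:equivleg} is packaged inside Theorem \ref{thm:main}. The only thing to be careful about is that the identifications $\SFH(-Y(K))=\HFKh(-Y,K)$ and $\SHMt(-Y(K))=\KHMt(-Y,K)$ are the ones used to define $\mathcal{L}_{HF}$ and $\mathcal{L}_{HM}$, respectively, so that the contact classes are identified with the Legendrian invariants on the nose; this is already built into the definitions adopted in \cite{sv} and \cite{bsLeg}.
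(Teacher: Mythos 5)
Your argument is exactly the one the paper intends: the theorem is stated with a \qed because it is an immediate corollary of Theorem~\ref{thm:main} applied to $(Y(K),\Gamma_\mu,\xi_K)$, combined with the definitional identities $\mathcal{L}_{HM}(K)=c_{HM}(\xi_K)$ from~(\ref{eqn:leg}) and $\mathcal{L}_{HF}(K)=c_{HF}(\xi_K)$ from Stipsicz--V\'ertesi, together with the identifications $\KHMt(-Y,K)=\SHMt(-Y(K))$ and $\HFKh(-Y,K)=\SFH(-Y(K))$. Your proof is correct and matches the paper's approach.
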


\begin{remark}
The second author defined a similar but different invariant of Legendrian knots in $\KHMt$ in \cite{sivek}. In fact, it was his construction that inspired our series \cite{bsSHM, bsSHI, bsLeg}. It is still unknown how his Legendrian invariant is related to $\mathcal{L}_{\HM}$ and therefore to $\mathcal{L}_{\HF}$.
\end{remark}

A Legendrian invariant is said to be \emph{effective} if it can distinguish smoothly isotopic Legendrian knots with the same classical invariants. The invariant $\mathcal{L}_{\HF}$ is effective in that there are Legendrian knots as above for which the invariant vanishes for one but not for the other.  Theorem \ref{thm:equivleg} then implies that $\mathcal{L}_{\HM}$ is effective as well, resolving \cite[Conjecture 1.1]{bsLeg}.

\begin{theorem}
The invariant $\mathcal{L}_{\HM}$ is effective. \qed
\end{theorem}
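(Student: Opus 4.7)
The plan is to deduce effectiveness of $\mathcal{L}_{HM}$ as a formal corollary of Theorem \ref{thm:equivleg}, together with the previously established effectiveness of the Heegaard-theoretic invariant $\mathcal{L}_{HF}$. The structure of the argument is essentially bookkeeping: transport a known effective example from the Heegaard side to the monopole side via the isomorphism.

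First, I would invoke the existing literature (Lisca--Ozsv{\'a}th--Stipsicz--Szab{\'o}, and subsequent authors producing concrete examples) to fix a pair of Legendrian knots $K_1, K_2 \subset (Y, \xi)$ which are smoothly isotopic, have matching Thurston--Bennequin and rotation numbers, and satisfy $\mathcal{L}_{HF}(K_1) = 0$ while $\mathcal{L}_{HF}(K_2) \neq 0$. This pair witnesses the effectiveness of $\mathcal{L}_{HF}$ that is taken for granted in the discussion preceding the theorem.

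Second, I would apply Theorem \ref{thm:equivleg} to each knot separately, obtaining $\Lambda$-module isomorphisms
\[
\HFKh(-Y, K_i) \otimes \Lambda \xrightarrow{\;\sim\;} \KHMt(-Y, K_i)
\]
that send $\mathcal{L}_{HF}(K_i) \otimes 1$ to $\mathcal{L}_{HM}(K_i)$. For $K_1$, the element $\mathcal{L}_{HF}(K_1) \otimes 1 = 0$ maps to $\mathcal{L}_{HM}(K_1) = 0$. For $K_2$, I would observe that the Novikov ring $\Lambda$ is free, hence flat and torsion-free, as an $\mathbb{F}$-module, so tensoring with $\Lambda$ over $\mathbb{F}$ preserves non-vanishing of individual elements; thus $\mathcal{L}_{HF}(K_2) \otimes 1 \neq 0$, and applying the isomorphism yields $\mathcal{L}_{HM}(K_2) \neq 0$. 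Since $K_1$ and $K_2$ are smoothly isotopic with identical classical invariants yet distinguished by $\mathcal{L}_{HM}$, effectiveness follows.

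There is no real obstacle in this argument: the only point needing verification is that $-\otimes_{\mathbb{F}} \Lambda$ preserves non-vanishing, which is immediate from the ring structure of $\Lambda$. All substantive content has been concentrated into Theorem \ref{thm:main} (and its specialization Theorem \ref{thm:equivleg}), whose proof constitutes the technical heart of the paper.
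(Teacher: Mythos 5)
Your proposal is correct and matches the paper's approach: the paper treats this as an immediate corollary of Theorem~\ref{thm:equivleg} together with the known effectiveness of $\mathcal{L}_{HF}$, which is exactly what you do. Your extra observation that $-\otimes_{\zt}\Lambda$ preserves non-vanishing (since $\Lambda$ is a nonzero free $\zt$-module) correctly fills in the one small point the paper leaves implicit.
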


\begin{remark}
The classes $\mathcal{L}_{\HF}$ and $\mathcal{L}_{\HM}$ are  invariant under negative Legendrian stabilization, and therefore provide  invariants of transverse knots as well. Theorem \ref{thm:equivleg}, combined with computations in Heegaard Floer homology \cite{lossz}, implies that $\mathcal{L}_{\HM}$ is also an effective transverse knot invariant, in the sense that it can distinguish smoothly isotopic transverse knots with the same self-linking numbers.
\end{remark}

An even more striking consequence of Theorems \ref{thm:functcon} and \ref{thm:equivleg} is that the invariant $\mathcal{L}_{\HF}$ is also functorial under Lagrangian concordance, as follows.

\begin{theorem}
\label{thm:functhf}
If $K_1,K_2\subset (Y,\xi)$ are Legendrian with $K_1\prec K_2$, then there is a map \[\HFKh(-Y,K_2)\otimes\Lambda\to \HFKh(-Y,K_1)\otimes\Lambda,\] sending $\mathcal{L}_{\HF}(K_2)\otimes 1$ to $\mathcal{L}_{\HF}(K_1)\otimes 1.$ \qed
\end{theorem}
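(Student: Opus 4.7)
The plan is to derive Theorem \ref{thm:functhf} as a purely formal consequence of Theorems \ref{thm:functcon} and \ref{thm:equivleg}, by transporting the Lagrangian concordance map on the monopole side through the comparison isomorphism. There is essentially no new geometric input: both the functoriality and the invariant-matching property have already been established in the statements we are allowed to quote, so the entire task is to assemble them coherently.

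Concretely, I would proceed as follows. First, apply Theorem \ref{thm:equivleg} twice to obtain isomorphisms of $\Lambda$-modules
\[
\Phi_i : \HFKh(-Y,K_i)\otimes \Lambda \xrightarrow{\ \cong\ } \KHMt(-Y,K_i), \qquad i=1,2,
\]
with the property that $\Phi_i\bigl(\mathcal{L}_{HF}(K_i)\otimes 1\bigr) = \mathcal{L}_{HM}(K_i)$. Next, since $K_1 \prec K_2$, Theorem \ref{thm:functcon} supplies a $\Lambda$-linear map
\[
F : \KHMt(-Y,K_2)\longrightarrow \KHMt(-Y,K_1)
\]
satisfying $F(\mathcal{L}_{HM}(K_2)) = \mathcal{L}_{HM}(K_1)$. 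Finally, define the desired Heegaard Floer map as the composition
\[
\Phi_1^{-1}\circ F \circ \Phi_2 : \HFKh(-Y,K_2)\otimes \Lambda \longrightarrow \HFKh(-Y,K_1)\otimes \Lambda.
\]
Tracking the generator $\mathcal{L}_{HF}(K_2)\otimes 1$ through this composition gives $\mathcal{L}_{HF}(K_1)\otimes 1$, as required.

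Since every ingredient is already in place, there is no real obstacle beyond being careful that the two applications of Theorem \ref{thm:equivleg} are being made consistently; in particular, both isomorphisms $\Phi_1,\Phi_2$ need to be the ones produced by the proof of Theorem \ref{thm:main} applied to the sutured manifolds $(-Y(K_i),-\Gamma_{\mu_i})$ associated with meridional sutures, so that the identification $\SFH(-Y(K)) = \HFKh(-Y,K)$ and the corresponding monopole identification are used in the same way on both sides. Given that, the theorem follows immediately, and accordingly the \qed in the statement is warranted.
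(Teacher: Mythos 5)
Your proposal is correct and is exactly the argument the paper has in mind: the \qed in the statement signals that Theorem \ref{thm:functhf} follows immediately by conjugating the monopole-side concordance map of Theorem \ref{thm:functcon} by the identifications of Theorem \ref{thm:equivleg}, which is precisely what you do with $\Phi_1^{-1}\circ F\circ\Phi_2$.
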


\begin{remark}We remark that decorated smooth concordances induce similar maps on knot Floer homology, as defined and studied by Juh\'asz \cite{juhasz-cobordisms} and Juh\'asz-Marengon \cite{juhasz-marengon}.  It is not clear how those maps are related to ours.
\end{remark}

Once again, the value of establishing this functoriality in the Heegaard Floer setting has to do with the relative computability of invariants in that setting. In fact, before the discovery of $\mathcal{L}_{\HF}$,  Ozsv{\'a}th, Szab{\'o}, and Thurston defined in \cite{oszt} an intrinsically computable invariant of Legendrian knots in the tight contact structure $(S^3,\xi_{std})$ using the grid diagram formulation of knot Floer homology. Their invariant assigns to a Legendrian $K\subset (S^3,\xi_{std})$ a class 
\[\Theta_{\HF}(K)\in \HFKh(-S^3,K).\] It was  shown in \cite{bvv} that these two Heegaard Floer invariants are equivalent where they overlap, per the following theorem.

\begin{theorem}[Baldwin--Vela-Vick--V{\'e}rtesi] 
\label{thm:equivleginv}For any Legendrian knot $K\subset (S^3,\xi_{std}),$ there is an automorphism \[\HFKh(-S^3,K)\to\HFKh(-S^3,K),\] sending $\mathcal{L}_{\HF}(K)$ to $\Theta_{\HF}(K)$.
\end{theorem}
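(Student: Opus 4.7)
The plan is to produce a common combinatorial model from which both $\mathcal{L}_{HF}(K)$ and $\Theta_{HF}(K)$ can be read off simultaneously, and then construct an automorphism of $\HFKh(-S^3,K)$ carrying one to the other by tracking generators through a sequence of elementary Heegaard moves. The key input is the Stipsicz--V\'ertesi reformulation
\[\mathcal{L}_{HF}(K)=c_{HF}(\xi_K)\in\SFH(-Y(K))=\HFKh(-S^3,K),\]
which identifies $\mathcal{L}_{HF}(K)$ with the Honda--Kazez--Mati\'c contact class of the canonical sutured contact structure $\xi_K$ on the knot complement with two meridional sutures.

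First, starting from a grid diagram $G$ of the Legendrian $K\subset (S^3,\xi_{std})$, I would build an explicit partial open book decomposition of $(Y(K),\xi_K)$ whose page is a planar surface reflecting the grid layout and whose monodromy is a product of positive Dehn twists encoded by the $\bs{X}$- and $\bs{O}$-markings of $G$. Applying the Honda--Kazez--Mati\'c recipe to this partial open book yields a sutured Heegaard diagram $H$ for $-Y(K)$, together with a canonical intersection point $\bx_{HKM}$ representing $c_{HF}(\xi_K)=\mathcal{L}_{HF}(K)$.

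Second, I would compare $H$ with the toroidal grid diagram $T(G)$: the latter is a multi-pointed Heegaard diagram for $S^3$ in which $\Theta_{HF}(K)$ is defined as the distinguished generator whose coordinates occupy the upper-right corners of the $\bs{X}$-squares. The aim is to exhibit an explicit finite sequence of stabilizations, handleslides, and isotopies carrying $H$ to $T(G)$, supported in a neighborhood of the binding so that the bulk of the two diagrams agree on the nose. The induced composition of quasi-isomorphisms will be the desired automorphism of $\HFKh(-S^3,K)$, and one needs to verify that $\bx_{HKM}$ tracks at each stage onto the distinguished grid generator representing $\Theta_{HF}(K)$.

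The main obstacle is the combinatorial bookkeeping in this second step: every handleslide triangle map and every stabilization change-of-basis map could a priori perturb the contact generator by lower-order terms, and one must check at each stage that the only holomorphic triangle contributing between the two distinguished generators is a small embedded one with no interior intersections. A convenient way to ensure this is to arrange the partial open book in the first step so that $H$ already sits inside the grid torus of $T(G)$, differing only by a controlled local model near the binding circle representing $K$; then the requisite moves and their triangle counts become entirely local and explicit, and composing them produces the automorphism claimed.
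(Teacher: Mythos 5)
This theorem is not proven in the present paper: it is quoted from Baldwin--Vela-Vick--V\'ertesi \cite{bvv}, so there is no internal proof to compare against, and the relevant benchmark is the argument in \cite{bvv} itself.

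Your outline is in the right spirit --- \cite{bvv} does invoke the Stipsicz--V\'ertesi identification of $\mathcal{L}_{HF}(K)$ with a contact class, does exploit the open book of $(S^3,\xi_{std})$ implicit in a grid diagram (annular page, monodromy a product of positive Dehn twists, with the transverse pushoff of $K$ realized as a braid), and does ultimately match distinguished generators across a concrete sequence of Heegaard moves. But the step you flag as the ``main obstacle'' is precisely where the content of the theorem lives, and your proposed resolution (``arrange the partial open book so that $H$ already sits inside the grid torus, differing only by a controlled local model near the binding'') is stated as a desideratum rather than demonstrated. In particular, you give no construction of the claimed partial open book for $(Y(K),\xi_K)$ from the grid, no argument that the resulting HKM sutured diagram embeds in the grid torus as claimed, and no verification that the intervening handleslide/stabilization maps carry $\bx_{HKM}$ to the grid generator without lower-order corrections; these are exactly the places where a blind attempt would bog down. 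The approach in \cite{bvv} avoids a head-on comparison by introducing an intermediate ``BRAID'' invariant $t$, defined directly from a multi-pointed Heegaard diagram associated to a transverse braid about an open book. One then shows $t=\Theta_{HF}$ essentially by observing that for the annular open book of a grid, the braid Heegaard diagram \emph{is} the grid diagram with the grid generator as the distinguished class, and separately shows $t$ agrees with the LOSS/Stipsicz--V\'ertesi class by relating the braid diagram to an HKM partial-open-book diagram for the complement. This factoring makes the generator-tracking local and manageable; without it, your direct route faces the full combinatorial burden at once, and as written that burden is not discharged.
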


Combined with Theorem \ref{thm:functhf}, this theorem implies that the invariant $\Theta_{\HF} = \mathcal{L}_{\HF}$ provides an entirely computable obstruction to the existence of Lagrangian concordances between Legendrian knots in $(S^3,\xi_{std})$, as follows. 

\begin{theorem}
\label{thm:grid-concordance}
If $K_1$ and $K_2$ are Legendrian knots in $(S^3,\xi_{std})$ with  \[\Theta_{\HF}(K_1) \neq 0\,\textrm{ and }\,\Theta_{\HF}(K_2) = 0,\] then there is no Lagrangian concordance from $K_1$ to $K_2$. \qed
\end{theorem}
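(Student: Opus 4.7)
The statement should follow by contradiction, combining Theorem \ref{thm:functhf} and Theorem \ref{thm:equivleginv} in a formal way. The plan is to assume a Lagrangian concordance $K_1\prec K_2$ exists and deduce that $\Theta_{HF}(K_1)$ would have to vanish, contradicting the hypothesis. First, I would translate the hypotheses from $\Theta_{HF}$ to $\mathcal{L}_{HF}$: by Theorem \ref{thm:equivleginv}, there are automorphisms of $\HFKh(-S^3,K_i)$ carrying $\mathcal{L}_{HF}(K_i)$ to $\Theta_{HF}(K_i)$, so the vanishing statements for the two invariants coincide. In particular the hypotheses become $\mathcal{L}_{HF}(K_1)\neq 0$ and $\mathcal{L}_{HF}(K_2)=0$.

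Next, assuming for contradiction that $K_1\prec K_2$, I would invoke Theorem \ref{thm:functhf} to produce an $\Lambda$-linear map
\[F\colon \HFKh(-S^3,K_2)\otimes \Lambda \longrightarrow \HFKh(-S^3,K_1)\otimes \Lambda\]
with $F(\mathcal{L}_{HF}(K_2)\otimes 1)=\mathcal{L}_{HF}(K_1)\otimes 1$. Since $\mathcal{L}_{HF}(K_2)=0$, the left-hand side is zero, so $\mathcal{L}_{HF}(K_1)\otimes 1 = 0$ in $\HFKh(-S^3,K_1)\otimes \Lambda$.

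The one small thing to verify is that this tensored vanishing actually implies $\mathcal{L}_{HF}(K_1)=0$ in $\HFKh(-S^3,K_1)$; I expect this to be the only non-formal step, and it is easy because we work over the field $\zt$ and $\Lambda$ is a torsion-free (in fact flat) $\zt$-module, so the unit map $\HFKh(-S^3,K_1)\to \HFKh(-S^3,K_1)\otimes\Lambda$, $x\mapsto x\otimes 1$, is injective. Hence $\mathcal{L}_{HF}(K_1)=0$, which via Theorem \ref{thm:equivleginv} gives $\Theta_{HF}(K_1)=0$, contradicting the hypothesis and proving the theorem. There is no substantive obstacle here beyond the flatness remark; the whole content of the result lies in Theorems \ref{thm:functhf} and \ref{thm:equivleginv}, which in turn rest on the main equivalence Theorem \ref{thm:main}.
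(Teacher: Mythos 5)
Your argument is correct and is exactly the deduction the paper has in mind: Theorem \ref{thm:grid-concordance} is stated with a \qed precisely because it follows formally by combining Theorem \ref{thm:equivleginv} (to pass between $\Theta_{HF}$ and $\mathcal{L}_{HF}$) with Theorem \ref{thm:functhf} (functoriality under Lagrangian concordance). Your observation that $\mathcal{L}_{HF}(K_1)\otimes 1=0$ forces $\mathcal{L}_{HF}(K_1)=0$ because $\zt\hookrightarrow\Lambda$ is injective and everything is over the field $\zt$ is a correct filling-in of the one small detail the paper leaves implicit.
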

As mentioned by the authors of \cite{bvv}, proving a result like Theorem \ref{thm:grid-concordance} was a major part of their motivation for establishing the equivalence described in Theorem \ref{thm:equivleginv}. 

It is easy to find examples demonstrating the effectiveness of the obstruction in Theorem \ref{thm:grid-concordance}. In particular, there are infinitely many pairs  $(K_1,K_2)$ of smoothly isotopic Legendrian knots with the same classical invariants which satisfy the hypotheses of Theorem \ref{thm:grid-concordance} \cite{ost, not, kng, bald9}.   The results of \cite{ost} imply that such $K_1$ and $K_2$ are not Legendrian isotopic, whereas Theorem \ref{thm:grid-concordance} implies the much stronger fact that $K_1$ is not Lagrangian concordant to $K_2$.

It bears mentioning that the Legendrian contact homology  DGA of Chekanov and Eliashberg \cite{yasha8} enjoys a similar sort of functoriality under Lagrangian concordance \cite{ehk}. However, it can be    difficult to apply  this DGA obstruction in practice. Consider, for example, the two Legendrian representatives $K_1$ and $K_2$ of $m(10_{132})$ with $(tb,r) = (1,0)$ described by Ng, Ozsv{\'a}th, and Thurston in \cite{not}. One can show that $K_1$ is not Lagrangian concordant to $K_2$ by showing that the DGA is trivial for $K_2$ while nontrivial for $K_1$. But proving this nontriviality is tricky as the DGA for $K_1$ does not even admit any nontrivial finite-dimensional representations \cite{sivek2}. By contrast, it is quite easy to check that $\Theta_{\HF}$ vanishes $K_2$ but not for $K_1$, and in so doing, apply the Heegaard Floer obstruction in Theorem \ref{thm:grid-concordance}.

Another advantage of $\Theta_{\HF}$ is that it is preserved under negative Legendrian stabilization, whereas the Legendrian contact homology DGA is trivial for stabilized knots. In particular, for $K_1$ and $K_2$ satisfying the hypotheses of  Theorem \ref{thm:grid-concordance}, we may  also conclude   that no negative stabilization of $K_1$ is Lagrangian concordant to any negative stabilization of $K_2$. The DGA, by contrast, cannot tell us anything about Lagrangian concordances between stabilized knots. 

In Section \ref{sec:concex}, we give another demonstration of  our obstruction, providing several additional  examples of Legendrian knots with the same classical invariants which are  not smoothly isotopic or Lagrangian concordant, but which \emph{are smoothly concordant}. In these examples, Lagrangian concordance is obstructed by Theorem \ref{thm:grid-concordance} while the Legendrian contact homology DGA provides no such obstruction.

Another important application of our work concerns the well-definedness of Honda, Kazez, and Mati{\'c}'s contact invariant. Given a sutured contact manifold $(M,\Gamma,\xi)$ and a  partial open book $\ob$ compatible with $\xi$, Honda, Kazez, and Mati{\'c}  define an element \[c_{\HF}(\ob)\in\SFH(-M,-\Gamma). \] They then prove that the elements associated to any two  open books compatible with $\xi$ agree, and define $c_{\HF}(\xi)$ to be this common element. Their proof that this class is  independent of the choice of   open book relies on Giroux's correspondence---in particular, on its assertion that any two open books compatible with $\xi$ are related by  positive stabilizations and destabilizations.\footnote{Henceforth, any mention of ``Giroux's correspondence" will refer to this assertion; proofs can be found in the literature for the other parts of the correspondence.} 
By contrast, our construction of $c_{\HM}(\xi)$ does not rely on this assertion.  

Moreover, in proving Theorem \ref{thm:main}, what we actually show (see Theorem \ref{thm:mainob}),  again without  using Giroux's correspondence, is that for any  partial open book $\ob$ compatible with $\xi$ there is a $\Lambda$-module isomorphism \begin{equation}\label{eqn:isoob}\SFH(-M,-\Gamma)\otimes \Lambda\to \SHMt(-M,-\Gamma)\end{equation}  sending $c_{\HF}(\ob)\otimes 1$ to $c_{\HM}(\xi)$. Our work thus gives a proof which does not rely on Giroux's correspondence that the elements  associated to any  two partial open books compatible with  $\xi$ are related by an automorphism of $\SFH(-M,-\Gamma)$; in other words, that $c_{\HF}(\xi)$ is well-defined up to isomorphism.
While our well-definedness statement is  weaker than that of Honda, Kazez, and Mati{\'c} (see the remark below), the value of our proof lies in the fact that a complete proof of Giroux's correspondence has yet to appear.

\begin{remark}
We do not claim to have given a Giroux-correspondence-free proof that $c_{\HF}(\xi)$ is well-defined \emph{as an element of} $\SFH(-M,-\Gamma)$, the point being that we do not know whether the isomorphism in \eqref{eqn:isoob} is independent of the choice of partial open book.  However, the question of whether $c_{\HF}(\xi)$ vanishes does not depend on the particular isomorphism, and it is often only this vanishing or non-vanishing that is used in applications.
\end{remark}

\subsection{Proof outline}

We outline our proof of Theorem \ref{thm:main} below following  a  very brief review of sutured monopole Floer homology and  our contact invariant.

A \emph{closure} of a balanced sutured manifold $(M,\Gamma)$, as defined by Kronheimer and Mrowka in \cite{km4}, is a  closed manifold $Y$ together with a distinguished surface $R\subset  Y$, formed from $(M,\Gamma)$ in a certain  manner, and containing $M$ as a submanifold. Let \[\Sc(Y|R) := \{\spc\in\Sc(Y)\mid\langle c_1(\spc),[R]\rangle = 2g(R)-2\}\] denote the set of ``top" $\Sc$ structures on $Y$ with respect to $R$. The sutured monopole Floer homology of $(M,\Gamma)$ is defined as \[\SHMt(M,\Gamma):=\HMtoc(Y| R; \Gamma_\eta):=\bigoplus_{\spc\in\Sc(Y|R)}\HMtoc(Y,\spc;\Gamma_\eta),\] where $\eta$ is a curve in $R$ of a certain form.  Given a sutured contact manifold $(M,\Gamma,\xi)$, we give a procedure in \cite{bsSHM} for extending $\xi$ to a contact structure $\bar\xi$ on  a certain class of closures $(Y,R)$ with respect to which $R$ is convex and such that  \begin{equation}\label{eqn:evalY}\langle c_1(\spc_{\bar\xi}),[R]\rangle = 2-2g(R).\end{equation} For a certain class of  $\eta\subset R$ as above,  we refer to the quadruple $(Y,R,\bar\xi,\eta)$ as a \emph{contact closure} of $(M,\Gamma,\xi)$. The  pairing  in \eqref{eqn:evalY} implies that \[\spc_{\bar\xi}\in\Sc({-}Y|{-}R),\]  which means that  $\HMtoc(-Y,\spc_{\bar\xi};\Gamma_\eta)$ is a direct summand of $\SHMt(-M,-\Gamma)$. We  define \[c_{\HM}(\xi):=c_{\HM}(\bar\xi)\in \HMtoc(-Y,\spc_{\bar\xi};\Gamma_\eta)\subset \SHMt(-M,-\Gamma),\] and prove that this class is independent of the  choices involved in its construction. 
Note that Theorem \ref{thm:closedequiv} provides an isomorphism
\[ \xymatrix@C=12pt@R=-2pt{
\hfp(-Y| {-}R; \Gamma_\eta)\ar[rr]^{}&& \HMtoc(-Y| {-}R; \Gamma_\eta)\,\,\, \ar@{}[r]|{=:}&\SHMt(-M,-\Gamma) \\
\rotatebox{90}{$\in$} && \rotatebox{90}{$\in$} & \rotatebox{90}{$\in$} \\
\,\,\,c_{\HF}(\bar\xi)\,\,\ar@{|->}[rr] &&\,\,\, c_{\HM}(\bar\xi)\,\ar@{}[r]|{=:}&c_{\HM}(\xi).
}\] 
Therefore, in order to prove Theorem \ref{thm:main}, it suffices to prove the following. 

\begin{theorem}
\label{thm:main2}
There is a contact closure $(Y,R,\bar\xi,\eta)$ of $(M,\Gamma,\xi)$ for which there exists an isomorphism of $\Lambda$-modules \[ A:\SFH(-M,-\Gamma)\otimes \Lambda \to \hfp(-Y| {-}R; \Gamma_\eta),\] sending 
$c_{\HF}(\xi)\otimes 1$ to $c_{\HF}(\bar\xi)$.
\end{theorem}

Our strategy for proving Theorem \ref{thm:main2}  makes use of an interesting topological reformulation of the contact invariant of $\bar\xi$ from \cite{bsSHM}. One starts with a partial open book for $(M,\Gamma,\xi)$, which provides a description of this contact manifold as obtained from an $[-1,1]$-invariant contact structure $\xi_S$ on the product sutured manifold \[H(S) = (S\times[-1,1],\partial S\times\{0\})\] by attaching contact 2-handles along certain curves $s_1,\dots,s_n$ in $\partial H(S)$. These curves correspond naturally to Legendrians  in a contact closure $(Y_S,R,\bar\xi_S, \eta)$ of $(H(S),\xi_S)$, and we proved that contact (+1)-surgery on these Legendrian curves results in a contact closure $(Y,R,\bar\xi,\eta)$ of $(M,\Gamma,\xi)$. It then follows from the functoriality of the contact invariant under such surgeries \cite[Theorem 4.2]{osz1} that the map \[B:\hfp(-Y_S|{-}R;\Gamma_\eta)\to\hfp(-Y|{-}R;\Gamma_\eta)\] induced by the natural 2-handle cobordism corresponding to these surgeries  satisfies \[ B(c_{\HF}(\bar\xi_S))=c_{\HF}(\bar\xi).\]  The contact class $c_{\HF}(\bar\xi_S)$ is always nonzero and the domain of $B$,  \[\hfp(-Y_S|{-}R;\Gamma_\eta)\cong \Lambda, \] is 1-dimensional, so this   class may be characterized simply as the generator of this module. We prove that if the initial contact closure $(Y_S,R, \bar\xi_S,\eta)$ is of a certain form, where, in particular, $g(R)$ is sufficiently large, then there is a $\Lambda$-module isomorphism as claimed in the theorem, and
the above characterization of $c_{\HF}(\bar\xi_S)$
enables us
to show that \begin{equation}\label{eqn:A=B}A(c_{\HF}(\xi)\otimes 1)= B(c_{\HF}(\bar\xi_S)).\end{equation} Since the latter equals $c_{\HF}(\bar\xi)$,  this proves Theorem \ref{thm:main2} and therefore Theorem \ref{thm:main}.

It bears mentioning that Lekili has already shown in \cite{lekili2} that the modules in Theorem \ref{thm:main2} are isomorphic. Given a sutured Heegaard diagram for $(-M,-\Gamma)$, Lekili constructs a pointed Heegaard diagram for $-Y$ in a certain  natural way, and, by comparing these diagrams, defines a quasi-isomorphism between the corresponding chain complexes.  Our map $A$ is defined using a similar, but slightly different diagram for ${-}Y$. A novel aspect of our construction is that, for $g(R)$ sufficiently large and for sufficient \emph{winding} of the curves in the Heegaard diagram, we are able to show that $A$ is a chain map and  quasi-isomorphism without resorting to the somewhat involved holomorphic disk analysis that appears in Lekili's proof. A similar principle, applied to counting holomorphic triangles, is used to prove the equality in \eqref{eqn:A=B}.

\subsection{Organization}
In Section \ref{sec:background}, we review the constructions and properties of the contact invariants in Heegaard and monopole Floer homologies and their sutured variants. In Section \ref{sec:proof}, we prove Theorem \ref{thm:main} as outlined above. In Section \ref{sec:concex}, we provide examples which further illustrate the effectiveness of Theorem \ref{thm:grid-concordance} in obstructing Lagrangian concordances.

\subsection{Acknowledgements} We thank Ko Honda for helpful correspondence.  We also thank the referees for several careful readings and many helpful comments, and especially for pointing out a mistake (twice!)\ in earlier drafts of this article; our corrections led to substantial improvements in the proofs of our main results.

\section{Background}
\label{sec:background}
\subsection{Sutured monopole Floer homology and contact invariants} 
\label{ssec:hm} 
Let $\Lambda$ be the Novikov ring over $\zzt$ defined by \[\Lambda=\bigg\{\sum_{\alpha}c_{\alpha}t^{\alpha}\,\,\bigg |\, \,\alpha\in\mathbb{R},\,\,c_{\alpha}\in\zzt,\,\,\#\{\alpha<n|c_{\alpha}\neq 0\}<\infty\textrm{ for all } n\in \zz\bigg\}. \] 
Suppose $Y$ is a closed, oriented 3-manifold and $\eta$ is a smooth 1-cycle in $Y$. Kronheimer and Mrowka  defined in \cite{kmbook,km4} a version of monopole Floer homology with local coefficients which assigns to the pair $(Y,\eta)$ a $\Lambda[U]$-module \[\HMtoc(Y;\Gamma_\eta) = \bigoplus_{\spc\in\Sc(Y)}\HMtoc(Y,\spc;\Gamma_\eta).\] 
Furthermore,  Kronheimer and Mrowka in \cite{km,kmosz} assign to a contact structure $\xi$ on $Y$ a class \[c_{\HM}(\xi)\in\HMtoc(-Y,\spc_\xi;\Gamma_\eta)\subset \HMtoc(-Y;\Gamma_\eta)\footnote{As mentioned in the introduction, this class is denoted by $\psi(\xi)$ in \cite{kmosz}.}\] which depends only on the isotopy class of $\xi$. We note that the construction of $c_{\HM}(\xi)$ does not rely on Giroux's correspondence.

We recall below the definition of sutured monopole Floer homology and our construction of the contact invariant for sutured contact manifolds defined therein.

Suppose $(M,\Gamma)$ is a balanced sutured manifold.  Let $A(\Gamma)$ be a closed tubular neighborhood of $\Gamma$ in $\partial M$, and let $T$ be a compact, connected, oriented surface with $g(T)\geq 2$ and $\pi_0(\partial T)\cong \pi_0(\Gamma)$.  Let \[h:\partial T\times [-1,1]\to A(\Gamma)\] be an orientation-reversing homeomorphism sending $\partial T\times \{\pm 1\}$ to $\partial (R_{\pm}\ssm A(\Gamma))$. Now consider  the \emph{preclosure} \[P = M\cup_h T\times [-1,1]\] formed by gluing $T\times[-1,1]$ according to $h$. The balanced condition ensures that $P$ has two homeomorphic boundary components, $\partial_+P$ and $\partial_-P$, given by\[
\partial_{\pm}P= (R_{\pm}\ssm A(\Gamma)) \cup T\times\{\pm1\}.\] One can then glue $\partial_+P$ to $\partial_-P$ by an orientation-reversing homeomorphism to form a closed, oriented 3-manifold $Y$ containing a distinguished surface \[R:=\partial_+P= -\partial_-P\subset Y.\] In \cite{km4}, Kronheimer and Mrowka define a \emph{closure} of $(M,\Gamma)$ to be any pair $(Y,R)$ obtained in this manner. They refer to $T$ as the \emph{auxiliary surface} used to form this closure.

\begin{remark}
If $(Y,R)$ is a closure of  $(M,\Gamma)$, then $(-Y,-R)$ is a closure of $(-M,-\Gamma)$.
\end{remark}

 \begin{remark}
 \label{rmk:altclosure}
It is sometimes useful to think of $Y$ as obtained by gluing  $R\times[-1,1]$ to $P$, by a map which identifies $R\times\{\pm 1\}$ with $\partial_{\mp}P$, and $R$ as $R\times\{0\}$. In particular, from this perspective, $\partial M$ is a  codimension 1 submanifold of $Y$.
\end{remark}

Suppose $(Y,R)$ is a closure of $(M,\Gamma)$ formed as above, and fix an oriented  curve $\eta\subset R$ which is dual to a homologically essential curve in the auxiliary surface $T$. As in the introduction, we let  \begin{equation}
\label{eqn:topSC}\Sc(Y|R) := \{\spc\in\Sc(Y)\mid \langle c_1(\spc),[R]\rangle = 2g(R)-2\}\end{equation} be the set of ``top" $\Sc$ structures on $Y$ with respect to $R$, and define the sutured monopole Floer homology of $(M,\Gamma)$ to be  the $\Lambda$-module  \[\SHMt(M,\Gamma):=\HMtoc(Y| R; \Gamma_\eta):=\bigoplus_{\spc\in\Sc(Y|R)}\HMtoc(Y,\spc;\Gamma_\eta).\]   Indeed, Kronheimer and Mrowka prove in \cite[Proposition~4.6]{km4} that the isomorphism class of this module is independent of the choice of closure $(Y,R)$ and  curve $\eta$, and is therefore an invariant of $(M,\Gamma)$. We later proved in \cite{bs3} that the modules assigned to different closures are related by \emph{canonical} isomorphisms.

Suppose now that $(M,\Gamma,\xi)$ is a sutured contact manifold. Let $(Y,R)$ be a closure of $(M,\Gamma)$ formed by  gluing on a thickened auxiliary surface $T\times[-1,1]$  to form a preclosure $P$, as  described above, and then gluing $\partial_+P$ to $\partial_-P$ by a map which sends  $c\times\{+1\}$ to $c\times\{-1\}$ for some nonseparating  curve $c\subset T$. In \cite[Section~3]{bsSHM}, we gave a procedure for extending $\xi$ to a contact structure $\bar \xi$ on   $Y$ such that $R$ is convex with respect to $\bar\xi$ with \[\langle c_1(\spc_{\bar\xi}),[R]\rangle = 2-2g(R).\] For any curve $\eta\subset T\subset R$ dual to $c$, we refer to the quadruple $(Y,R,\bar\xi,\eta)$ as a \emph{contact closure} of $(M,\Gamma,\xi)$. The above pairing implies that \[\spc_{\bar\xi}\in\Sc(-Y|{-}R).\footnote{We incorrectly computed this pairing in \cite{bsSHM} to be $2-2g(R)$; this error does not affect the validity of any of the results in that paper.}\]  It therefore makes sense to define the element  \[c_{\HM}(\xi):=c_{\HM}(\bar\xi)\in \HMtoc(-Y,\spc_{\bar\xi};\Gamma_\eta)\subset \HMtoc(-Y| {-}R; \Gamma_\eta)=:\SHMt(-M,-\Gamma).\] We proved in \cite{bsSHM} that this element is independent of the choices involved in its construction. Our proof does not rely on Giroux's correspondence.

\subsection{Partial open books for sutured contact manifolds}
\label{ssec:partialob}
We  assume the reader is familiar with (non-partial) open books for  closed contact manifolds. 
Following \cite[Definition 4.9]{bsSHM},  a  \emph{partial open book} is a quadruple $(S,P,h,\{c_1,\dots,c_n\})$, where: 
\begin{itemize}
\item $S$ is a surface with nonempty boundary, 
\item $P$ is a subsurface of $S$,  
\item $h:P\to S$ is an embedding which restricts to the identity on $\partial P\cap \partial S$, 
\item $\{c_1,\dots,c_n\}$ is a set of disjoint, properly embedded arcs in $P$ whose complement in $S$ deformation retracts onto $S\ssm P$.
\end{itemize}

\begin{remark}
\label{rmk:basisindep}
The collection $\{c_1,\dots,c_n\}$ is called a \emph{basis} for the partial open book. It is often not included as part of the definition since the sutured contact manifold compatible with the partial open book, described below, is independent of the basis.
\end{remark}

Given a partial open book \[\ob=(S,P,h,\{c_1,\dots,c_n\}), \]let $\xi_S$ be the $[-1,1]$-invariant contact structure on $S\times[-1,1]$ for which each $S\times\{t\}$ is convex with  Legendrian boundary and  dividing set consisting of one boundary-parallel arc for each component of $\partial S$, oriented in the direction of $\partial S$. Let $(H(S),\xi_S)$ denote the  sutured contact manifold  obtained from $(S\times[-1,1],\xi_{S})$ by rounding corners, as illustrated in Figure \ref{fig:productsurfacecorners} below. In particular,    the dividing set on $\partial H(S)$ is isotopic to $\partial S\times \{0\}$. 
Let $s_i$ be the curve on $\partial H(S)$ given by\begin{equation}\label{eqn:basishandle}s_i=(c_i\times\{1\})\cup (\partial c_i\times [-1,1])\cup (h(c_i)\times\{-1\}).\end{equation}  (In a slight abuse of notation, we identify $H(S)$ with $(S\times[-1,1],\xi_S)$, ignoring corner rounding.) 
We say that the partial open book $\ob$ is  \emph{compatible} with the sutured contact manifold $(M,\Gamma,\xi)$ if the latter can be obtained  from $(H(S),\xi_S)$ by attaching contact $2$-handles along the curves $s_1,\dots,s_n$.  Honda, Kazez, and Mati{\'c} proved the following in \cite[Theorem~1.3]{hkm4}.

\begin{theorem}[Honda--Kazez--Mati{\'c}]
\label{thm:relativegiroux1}
Every sutured contact manifold admits a compatible partial open book decomposition.
\end{theorem}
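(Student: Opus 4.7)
The plan is to build a contact cell decomposition of $(M,\Gamma,\xi)$ adapted to the convex boundary, and then read the partial open book off of its 1-skeleton. This is the sutured analogue of Giroux's original argument in the closed case.

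First I would construct a finite Legendrian graph $K\subset M$ such that $K\cap \partial M$ is a Legendrian graph on $\partial M$ whose complement consists of disks, each containing exactly one arc of $\Gamma$; the complement $M\ssm \nu(K)$ is a disjoint union of standard contact 3-balls with connected dividing set on the boundary; and each 2-cell of the induced decomposition is a convex disk with $tb=-1$. The existence of such a $K$ follows from Giroux's convex surface theory: start with any cell decomposition of $\partial M$ adapted to $\Gamma$, use the Legendrian realization principle to make its 1-skeleton Legendrian, extend this to an interior triangulation whose edges are Legendrianized after a $C^0$-small perturbation, and then subdivide any interior 2-cell with $tb(\partial D)<-1$ via Legendrian stabilization moves. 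Eliashberg's classification of tight contact structures on the 3-ball ensures that each complementary 3-cell is standard.

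Next I would observe that a regular neighborhood $\nu(K)$ is contactomorphic to $H(S)$ for an appropriate compact surface $S$: the ribbon surface of $K$ relative to its framing by convex meridional disks. Under this identification, $\xi|_{\nu(K)}$ matches the $[-1,1]$-invariant structure $\xi_S$, and the dividing set on $\partial H(S)$ matches $\Gamma$ on $\partial M\cap \nu(K)$ together with one meridional suture across each 1-handle of $S$. For each 2-cell $D_i$, its Legendrian boundary $\partial D_i$ with $tb=-1$ lies on $\partial H(S)$; after a contact isotopy, one can arrange that $\partial D_i\cap(S\times\{1\})$ is a single properly embedded arc $c_i\subset S$. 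The arcs $c_1,\dots,c_n$, together with the subsurface $P$ given by a regular neighborhood of $\bigcup c_i$ in $S$ and the monodromy $h:P\to S$ sending $c_i$ to the arc $h(c_i)\subset S\times\{-1\}$ traced out by the opposite portion of $\partial D_i$, yield the desired partial open book $(S,P,h,\{c_1,\dots,c_n\})$. By construction, each $s_i$ coincides with $\partial D_i$, so attaching contact 2-handles along the $s_i$ fills in every 2-cell; uniqueness of the tight structure on the complementary 3-balls then recovers $(M,\Gamma,\xi)$ on the nose, establishing compatibility.

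The main obstacle will be the first step, namely producing a contact cell decomposition with precisely the right boundary behavior: simultaneously matching the 1-skeleton on $\partial M$ to $\Gamma$ via Legendrian realization, controlling the dividing sets on the disks cut out of $\partial M$, and reducing interior 2-cells down to $tb=-1$ without disturbing the boundary structure. Once this combinatorial model is in hand, the remaining steps amount to translating combinatorics into the partial open book data $(S,P,h,\mathbf{c})$ and invoking standard uniqueness results for contact structures on handles and balls.
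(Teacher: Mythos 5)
The paper does not prove this theorem; it is a black-box citation to \cite{hkm4}. Your sketch faithfully reproduces the contact-cell-decomposition argument of Honda, Kazez, and Mati{\'c} in that reference: build a Legendrian graph adapted to $(\partial M,\Gamma)$ whose regular neighborhood is contactomorphic to $H(S)$ and whose complement is a union of standard tight $3$-balls, then read the basis arcs $c_i$ and the monodromy $h$ off the convex $2$-cells with $tb=-1$. You correctly identify the delicate step---producing a contact cell decomposition with the prescribed behavior on the convex boundary, including the Legendrian realization of the $1$-skeleton on $\partial M$ relative to $\Gamma$ and the reduction of interior $2$-cells to $tb=-1$ without disturbing the boundary---so no gap is apparent beyond the work you explicitly defer to convex surface theory and Eliashberg's uniqueness for tight $B^3$.
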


\begin{remark}
A (non-partial) open book for a closed contact $3$-manifold can be thought of as a partial open book in which $P$ is the complement of a disk in $S$. The corresponding closed contact $3$-manifold is formed by attaching contact $2$-handles to $(H(S),\xi_S)$ as above and then filling the resulting $S^2$ boundary with a Darboux ball.
\end{remark}

\begin{figure}[ht]
\centering
\includegraphics[width=10cm]{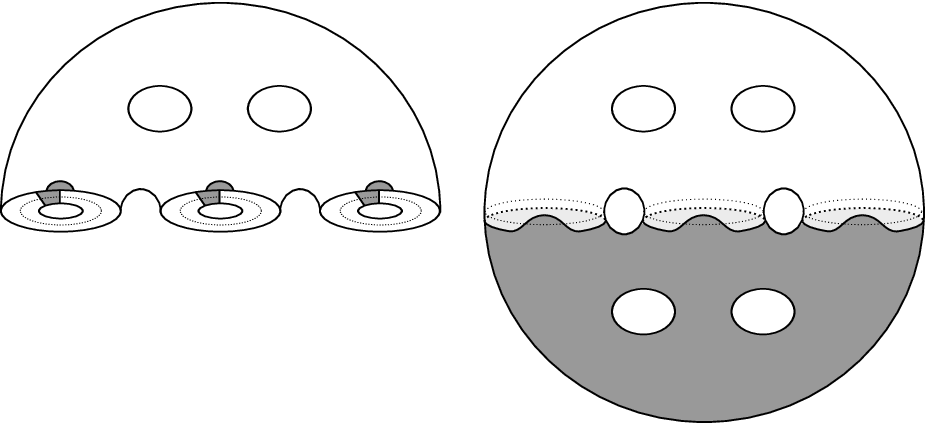}
\caption{Left, $(S\times[-1,1],\xi_{S})$,  with  negative region shaded, for a genus $2$ surface with $3$ boundary components. Right, the convex boundary of the sutured contact manifold $(H(S),\xi_S)$ obtained by rounding corners.}
\label{fig:productsurfacecorners}
\end{figure}

\subsection{Sutured Heegaard Floer homology and contact invariants}
\label{ssec:sfh}
.....................................................................................................................................................................................................................................................................................................................
To define the sutured Heegaard Floer homology of a balanced sutured manifold $(M,\Gamma)$, as introduced by Juh{\'a}sz in \cite{juhasz2}, one starts with an admissible sutured Heegaard diagram \[(\Sigma,\boldsymbol\alpha = \{\alpha_1,\dots,\alpha_n\},\boldsymbol\beta = \{\beta_1\dots,\beta_n\})\] for $(M,\Gamma)$. In particular, 
\begin{itemize}
\item $\Sigma$ is a compact surface with boundary, 
\item $M$ is obtained from $\Sigma\times[-1,1]$ by attaching 3-dimensional 2-handles along the curves $\alpha_i\times\{-1\}$ and $\beta_i\times\{1\}$, for $i= 1,\dots,n$, and 
\item $\Gamma$ is given by $ \partial \Sigma\times\{0\}$. 
\end{itemize} 
The admissibility condition means  that every nontrivial periodic domain has both positive and negative coefficients.

The sutured Heegaard Floer complex $\sfc(\Sigma,\boldsymbol\alpha,\boldsymbol\beta)$ is the $\zzt$ vector space  generated by intersection points \[\mathbf{x}\in\Ta\cap \Tb = (\alpha_1\times\dots\times\alpha_n)\cap (\beta_1\times\dots\times\beta_n)\subset \operatorname{Sym}^n(\Sigma).\]  The differential  is defined by counting holomorphic disks in the usual way; namely, for a generator $\bx$ as above,   \[d\bx =\sum_{\by\in\Ta\cap\Tb}\,\sum_{\substack{\phi\in\pi_2(\bx,\by)\\\mu(\phi)=1}} \#\big(\mathcal{M}(\phi)/\mathbb{R}\big)\cdot \by, \] where $\pi_2(\bx,\by)$ is the set of homotopy classes of Whitney disks from $\bx$ to $\by$;  $\mu(\phi)$ refers to the Maslov index of $\phi$; and $\mathcal{M}(\phi)$ is the moduli space of pseudoholomorphic representatives of $\phi$. The sutured Heegaard Floer homology of $(M,\Gamma)$ is the homology \[\sfh(M,\Gamma):=H_*(\sfc(\Sigma,\boldsymbol\alpha,\boldsymbol\beta))\] of this complex.

Suppose now that $(M,\Gamma,\xi)$ is a sutured contact manifold and that \[\ob=(S,P,h,\{c_1,\dots,c_n\})\] is a  partial open book compatible with $\xi$. Let $\Sigma$ be the surface formed  by  attaching $1$-handles $H_1,\dots,H_n$ to $S$, where the feet of $H_i$ are attached along  the endpoints of $c_i$.
Orient $\Sigma$ so that the induced orientation on $S$ as a subsurface of $\Sigma$ is opposite the given orientation on $S$.
For  $i=1,\dots,n$, let $\alpha_i$ and $\beta_i$ be embedded curves in $\Sigma$ such that: 
\begin{itemize}
\item $\alpha_i$ is the union of $c_i$ with a core of $H_i$, and

\item $\beta_i$ is the union of $h(c_i)$ with a core of $H_i$.
\end{itemize}
We require that these curves intersect  in the region $H_i\subset \Sigma$ in the manner shown in  Figure \ref{fig:handlecurves}.  Then $(\Sigma,\boldsymbol\beta,\boldsymbol\alpha)$ is an admissible sutured Heegaard diagram for $(-M,-\Gamma)$, as observed in \cite[Section~2]{hkm4}. In particular, we note the following for later use.

\begin{remark}
\label{rmk:orientations}
Since \[R_+(-M,-\Gamma)=R_+(M,\Gamma),\] we note that the oriented surface $\Sigma$ agrees with $R_+(\Gamma)$ in the identification of $(-M,-\Gamma)$ with the sutured manifold specified by $(\Sigma,\boldsymbol\beta,\boldsymbol\alpha)$.
\end{remark}

\begin{figure}[ht]
\labellist
\tiny
\pinlabel $\beta_i$ at 57 101
\pinlabel $\alpha_i$ at 80 100
\pinlabel $c^i$ at 138 24
\small
\pinlabel $H_i$ at 190 6
\pinlabel $-S$ at 240 95

\endlabellist
\centering
\includegraphics[width=6.5cm]{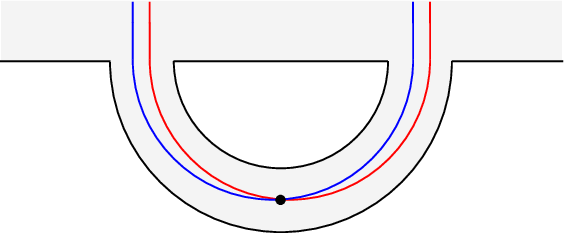}
\caption{The handle $H_i$, the curves $\alpha_i$ and $\beta_i$, and the intersection point $c^i$.}
\label{fig:handlecurves}
\end{figure}

For each $i=1,\dots,n$, let $c^i$ be the  intersection point between $\alpha_i$ and $\beta_i$ in $H_i$,  and define \begin{equation}\label{eqn:defc}\mathbf{c}= \{c^1,\dots,c^n\}\in \Tb\cap \Ta \subset \operatorname{Sym}^n(\Sigma).\end{equation} This generator is a cycle in the  complex $\sfc(\Sigma,\boldsymbol\beta,\boldsymbol\alpha)$, and Honda, Kazez, and Mati{\'c} define \[c_{\HF}(\ob):=[\mathbf{c}]\in \sfh(-M,-\Gamma)\] in \cite{hkm4}. They prove that this class is independent of the basis of the open book. They  moreover prove that if $\ob'$ is obtained from $\ob$ via positive stabilization then \[c_{\HF}(\ob)=c_{\HF}(\ob').\]  Giroux's correspondence then implies that the classes associated to any two partial open books compatible with $\xi$ are equal. Accordingly, Honda, Kazez, and Mati{\'c} define \[c_{\HF}(\xi):=c_{\HF}(\ob)\] for any partial open book compatible with $\xi$.

\subsection{Heegaard Floer homology with local coefficients and contact invariants}
\label{ssec:hflocal}

Suppose $Y$ is a closed, oriented 3-manifold and $\eta$ is a smooth 1-cycle in $Y$. To define the Heegaard Floer homology of $Y$  in a $\Sc$ structure $\spc\in \Sc(Y)$ with local coefficient system associated with $\eta$, one starts with an weakly $\spc$-admissible pointed Heegaard diagram  \[(\Sigma,\boldsymbol\alpha = \{\alpha_1,\dots,\alpha_n\},\boldsymbol\beta=\{\beta_1,\dots,\beta_n\},z)\] for $Y$. This admissibility condition means that every nontrivial periodic domain $P$ with \[\langle c_1(\spc), [{P}]\rangle=0\] has a negative multiplicity, where $[{P}]$ refers to the class in $H_2(Y)$ represented by $P$. We may view $\eta$ as a (possibly non-embedded) curve on $\Sigma$. The chain complex \[\cfp(\Sigma,\boldsymbol\alpha,\boldsymbol\beta,z,\spc;\Gamma_\eta)\] is the $\Lambda[U]$-module \[\bigoplus_{\substack{\bx \in T_\alpha \cap T_\beta\\
\spc_z(\bx)=\spc}} \bigg(\frac{\Lambda[U,U^{-1}]}{U\cdot \Lambda[U]}\bigg)\langle\bx\rangle,\] 
  generated by intersection points $\bx\in\Ta\cap \Tb$ whose associated  $\Sc$ structure $\spc_z(\bx)$ equals $\spc$. For ease of notation, we will adopt the convention in \cite{osz8} and use  $[\bx,i]$ to denote $U^{-i}\bx$, which then vanishes for $i<0$. The differential is defined on such a pair by \[ \partial([\bx,i]) =\sum_{\substack{\phi\in\pi_2(\bx,\by)\\\mu(\phi)=1}} \#\big(\mathcal{M}(\phi)/\mathbb{R}\big)\cdot [\by,i-n_z(\phi)]\cdot t^{\partial_\alpha(\phi)\cdot \eta}, \]  and extended linearly with respect to multiplication in $\Lambda$. Here,  $\partial_\alpha(\phi)\cdot \eta$ refers to the oriented intersection in $\Sigma$ of the $\boldsymbol\alpha$ portion of the boundary of the domain of $\phi$ in $\Sigma$ with the curve $\eta$. The  Heegaard Floer homology of $Y$ is the homology \[\hfp(Y,\spc;\Gamma_\eta):=H_*(\cfp(\Sigma,\boldsymbol\alpha,\boldsymbol\beta,z,\spc;\Gamma_\eta),\partial)\] of this complex. This module is an invariant of the  class $[\eta]\in H_1(Y)$. Given an embedded surface $R\subset Y$ in addition to $\eta$, we define \[\hfp(Y|R;\Gamma_\eta):=\bigoplus_{\spc\in\Sc(Y|R)}\hfp(Y,\spc;\Gamma_\eta),\] just as for monopole Floer homology. Heegaard Floer homology obeys the following adjunction inequality \cite[Corollary 7.2]{osz14}. 
    
\begin{theorem}[Ozsv{\'a}th--Szab{\'o}]
\label{thm:adj3}
If $Z$ is a connected, embedded surface in $Y$ and $\hfp(Y,\spc;\Gamma_\eta)$ is nonzero  then \begin{equation*}
 \label{eqn:adj3}|\langle c_1(\spc),[Z] \rangle| \leq \max\{0,-\chi(Z)\}.\end{equation*} 
\end{theorem}

Suppose now that $(Y,\xi)$ is a closed contact 3-manifold and $\eta$ is a smooth 1-cycle in $Y$. To define the associated Ozsv{\'a}th-Szab{\'o} contact invariant,  one chooses a (non-partial) open book $\ob$ compatible with $(Y,\xi)$ and assigns to this open book a class \[c_{\HF}(\ob)\in\hfp(-Y,\spc_\xi;\Gamma_\eta),\] following \cite{osz1,hkm4}. As in the partial open book case, this class is independent of the basis and is invariant under positive stabilization. Giroux's correspondence then implies that this class is an invariant of the contact structure $\xi$. Accordingly, Ozsv{\'a}th and Szab{\'o} define \[c_{\HF}(\xi):=c_{\HF}(\ob)\] for any open book $\ob$ compatible with $\xi$. 
This  invariant is functorial with respect to contact $(+1)$-surgery, as follows from \cite[Theorem~4.2]{osz1}.\footnote{This theorem was originally stated with coefficients in $\zzt$, but the proof works in the setting of local coefficients just as well.}

\begin{theorem}[Ozsv{\'a}th--Szab{\'o}]
\label{thm:contactsurgeryhf}
Suppose $(Y',\xi')$ is the result of contact $(+1)$-surgery on a Legendrian link $L\subset (Y,\xi)$ disjoint from $\eta$. Let $W$ be the corresponding 2-handle cobordism, obtained from $Y\times[0,1]$ by attaching  contact $(+1)$-framed 2-handles along $L\times\{1\}$, and let   $\nu\subset W$ be the  cylinder $\nu= \eta\times [0,1]$. Then there are open books $\ob$ and $\ob'$ compatible with $\xi$ and $\xi'$, respectively, such that the induced map \[\hfp(-W;\Gamma_{\nu}):\hfp(-Y;\Gamma_{\eta})\to\hfp(-Y';\Gamma_\eta)\] sends $c_{\HF}(\ob)$ to $c_{\HF}(\ob')$.
\end{theorem}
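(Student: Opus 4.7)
The plan is to adapt the proof of \cite[Theorem 4.2]{osz1} from the $\zzt$ setting to local coefficients, the only new ingredient being to track the Novikov weights and verify that they are trivial for the canonical triangle appearing in the original argument. First, by a construction of Akbulut--Ozbagci, I would choose an open book $(S,\phi)$ compatible with $(Y,\xi)$ such that $K$ sits on a page $S_0$ as a Legendrian curve whose contact framing agrees with the page framing. Because $K$ is disjoint from $\eta$, I may isotope $\eta$ in $Y$ to be disjoint from a small tubular neighborhood $\nu(K)$ of $K$.

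Following Section \ref{ssec:hflocal}, I would use this open book (minus a Darboux ball chosen away from both $\eta$ and $K$) to produce a pointed Heegaard diagram $(\Sigma',\beta,\alpha,z)$ for $-Y$, with $\eta$ realized as an embedded curve on $\Sigma'$ that misses the image of $\nu(K)$, so that $c_{HF}(\xi)=[[\mathbf{c},0]]$ with $\mathbf{c}$ as in \eqref{eqn:defc}. Contact $(+1)$-surgery on $K$ modifies the monodromy to $\phi\circ D_K^{-1}$, and in the diagram this replaces a single curve $\beta_i$ by a new curve $\beta'_i$ obtained from $\beta_i$ by a handleslide across the copy of $K$ sitting on $\Sigma'$; the remaining $\beta$ curves are unchanged. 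Setting $\beta'=\{\beta_1,\dots,\beta'_i,\dots,\beta_n\}$, the diagram $(\Sigma',\beta',\alpha,z)$ represents $-Y'$ and encodes $c_{HF}(\xi')=[[\mathbf{c}',0]]$, while the triple $(\Sigma',\beta,\beta',\alpha,z)$ represents the cobordism $-W$. Crucially, the entire handleslide region lies inside (the image of) $\nu(K)$ and is therefore disjoint from $\eta$.

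The induced map $\hfp(-W;\Gamma_\nu)$ is computed by counting triangles $\psi\in\pi_2(\bx,\bt,\by)$ with $\mu(\psi)=0$, weighted by $U^{n_z(\psi)}\cdot t^{\partial_\alpha(\psi)\cdot\eta}$ and extended $\Lambda$-linearly. Just as in the $\zzt$ argument of \cite{osz1}, there is a canonical small triangle $\psi_0\in\pi_2(\mathbf{c},\bt,\mathbf{c}')$, where $\bt\in\Tb\cap\mathbb{T}_{\beta'}$ is the top-degree intersection point, whose domain is supported entirely in the handleslide region and satisfies $n_z(\psi_0)=0$ and $\mu(\psi_0)=0$. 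Since the $\alpha$-boundary of $\psi_0$ is contained in that same region, which we arranged to be disjoint from $\eta$, we have $\partial_\alpha(\psi_0)\cdot\eta=0$, so $\psi_0$ contributes to the coefficient of $[\mathbf{c}',0]$ with weight $t^0=1$.

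The main obstacle is to check that $\psi_0$ gives the unique contribution. This reduces to the same positivity/energy argument Ozsv\'ath and Szab\'o use in the $\zzt$ setting: any other homotopy class $\psi\in\pi_2(\mathbf{c},\bt,\mathbf{c}')$ in the relevant $\Sc$ structure differs from $\psi_0$ by a triply periodic domain, and positivity of domains then forces $n_z(\psi)>0$, so such terms are killed after passing to $\cfp$. Here the local coefficient weight $t^{\partial_\alpha(\psi)\cdot\eta}$ only multiplies each term, so it does not interfere with this cancellation argument; it merely tracks that the surviving small triangle contributes with trivial weight. Combining these pieces yields $\hfp(-W;\Gamma_\nu)(c_{HF}(\xi))=c_{HF}(\xi')$, up to a unit in $\Lambda$, as required.
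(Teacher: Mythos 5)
The paper itself does not prove this theorem; it is a black-box citation of \cite[Theorem~4.2]{osz1}, together with the footnote that the $\zzt$ argument carries over unchanged to local coefficients, so there is no ``paper proof'' to compare against. Your skeleton is the right one (twist one $\beta$-curve in the open book diagram, count triangles, and track the Novikov weight of the canonical triangle), but one of your central claims is incorrect. You assert that the domain of $\psi_0$ ``is supported entirely in the handleslide region'' inside $\nu(K)$. That is not where $\psi_0$ lives. Each component small triangle has vertices at $c_j\in\alpha_j\cap\beta_j$, $\Theta^j\in\beta_j\cap\beta'_j$, and $c'_j\in\beta'_j\cap\alpha_j$; the points $c_j$ and $c'_j$ sit in the one-handle regions $H_j$ near $\partial S$, far from $K$. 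Since $\beta_1$ and $\beta'_1$ coincide outside a neighborhood of the Dehn-twist locus, the relevant top intersection $\Theta^1$ used in the canonical triangle is produced by a small perturbation near $c_1$, and the triangle concentrates there, not around $K$. Your desired conclusion $\partial_\alpha(\psi_0)\cdot\eta = 0$ is still obtainable, but it comes from isotoping $\eta$ off the handle regions $H_j$ (or just taking $\eta$ interior to $S$ away from $\partial S$), not from isotoping $\eta$ off $\nu(K)$ as you argue.

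Two further points. First, you should at least justify that $(\Sigma',\beta,\beta')$ is $\#^{n-1}(S^2\times S^1)$, which is required for the triple to compute a 2-handle cobordism map; this follows because $\beta'_1$ is homologous to $\beta_1\pm\overline{\phi(K)}$, and $K$ meets $a_1$ once and misses $a_2,\dots,a_n$, so $\beta_1\cdot\beta'_1 = \pm 1$ on the residual torus obtained by compressing along $\beta_2,\dots,\beta_n$. Second, the hard part of your write-up (showing the Novikov weight is literally $t^0$) is not actually needed: the contribution of a single canonical triangle is a monomial $t^{\partial_\alpha(\psi_0)\cdot\eta}$, which is a unit in $\Lambda$ regardless of its exponent, and this paper only uses the theorem up to multiplication by units in $\Lambda$. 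So once uniqueness of the contributing triangle is established --- which is where the genuine work in the $\zzt$ proof lies, and which you only gesture at via the positivity argument --- the extension to local coefficients is essentially automatic.
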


The following version of Theorem \ref{thm:closedequiv} relates the Heegaard Floer and monopole Floer contact invariants. This version of the theorem does not rely on Giroux's correspondence.

\begin{theorem}[Taubes, Colin--Ghiggini--Honda]
\label{thm:closedequiv2}
For every $\spc\in\Sc({-}Y)$ and every open book $\ob$ compatible with $\xi$, there is an isomorphism of $\Lambda[U]$-modules
\[\Phi_\spc:\hfp({-}Y,\spc;\Gamma_{\eta})\to\HMtoc(-Y,\spc;\Gamma_{\eta}),\]
such that $\Phi_{\spc_\xi}(c_{\HF}(\ob))=c_{\HM}(\xi)$.
\end{theorem}

\subsection{Reformulating the invariant of a contact closure}
\label{ssec:reform}
We explain below a reformulation of the invariant of a contact closure which  will be critical in our proof of Theorem \ref{thm:mainob2} (a strong version of Theorem \ref{thm:main2}) in the next section.

Suppose $(M,\Gamma,\xi)$ is a sutured contact manifold with compatible partial open book \[(S,P,h,\{c_1,\dots,c_n\}).\] Suppose $(Y_S,R,\bar\xi_S,\eta)$ is a contact closure of the  sutured contact manifold $(H(S),\xi_S)$ defined in Subsection \ref{ssec:partialob}. Adopting the perspective of Remark \ref{rmk:altclosure}, we may view the curves $s_1,\dots,s_n$ defined in \eqref{eqn:basishandle} as embedded curves in $Y_S$  disjoint from $R$. After  small perturbation, we may assume that these $s_i$ are Legendrian with respect to $\bar\xi_S$, via the Legendrian Realization Principle \cite{kanda, honda2}. Each $s_i$ intersects the dividing set  of $\partial H(S)$ in two places, which implies that the $\partial H(S)$-framing on $s_i$ is one more than its contact framing.  In \cite[Section~4.2.3]{bsSHM}, we proved that the result of contact $(+1)$-surgeries on the resulting Legendrian link \[\mathbb{L}=s_1\cup \dots\cup s_n\] is a contact closure $(Y,R,\bar\xi,\eta)$ of $(M,\Gamma,\xi)$. We further showed that 
\begin{equation*}\label{eqn:productlambda}\HMtoc(-Y_S|{-}R;\Gamma_\eta)\cong \Lambda\end{equation*} is generated by the contact class $c_{\HM}(\bar\xi_S)$.  Theorem \ref{thm:closedequiv2} then implies that the same is true in Heegaard Floer homology.  Specifically,  \begin{equation*}\label{eqn:productlambda2}\hfp(-Y_S|{-}R;\Gamma_\eta)\cong \Lambda\end{equation*} is generated by the contact class of any open book compatible with $\bar\xi_S$.

\begin{remark}
One does not need Theorem \ref{thm:closedequiv2} to prove that this Heegaard Floer module is $1$-dimensional or that the contact class is nonzero for some open book for $\bar\xi_S$; one does need the theorem, however, to see that the contact class is nonzero for \emph{any} open book without appealing to Giroux's correspondence.
\end{remark}

Let $W$ be the 2-handle cobordism from $Y_S$ to $Y$ corresponding to the  surgery on $\mathbb{L}$. Note that $R\subset Y_S$ and $R\subset Y$ are homologous (in fact, isotopic) in $W$. Therefore, letting \begin{equation}\label{eqn:topSCW}\Sc(-W|{-}R):=\{\spt\in\Sc(-W)\mid\langle c_1(\spt),[-R]\rangle = 2g(R)-2\}\end{equation} denote the set of ``top" $\Sc$ structures on $-W$ with respect to $-R$, we have that the  cobordism map in Theorem \ref{thm:contactsurgeryhf} restricts to a map \begin{equation}\label{eqn:cobmapbkgnd}\hfp(-W|{-}R;\Gamma_{\nu}):\hfp(-Y_S|{-}R;\Gamma_{\eta})\to\hfp(-Y|{-}R;\Gamma_\eta),\end{equation} given by   \[\hfp(-W|{-}R;\Gamma_{\nu}):=\sum_{\spt\in\Sc(-W|{-}R)} \hfp(-W,\spt;\Gamma_{\nu}).\]  By Theorem \ref{thm:contactsurgeryhf} and the discussion above, we have the following reformulation of the contact invariant of $\bar\xi$. 
\begin{corollary}
\label{cor:functgen}
There exists an open book $\bar\ob$ compatible with $\bar\xi$ such that \[c_{\HF}(\bar\ob)=\hfp(-W|{-}R;\Gamma_{\nu})(\mathbf{1}),\] where $\mathbf{1}$ refers to a generator of $\hfp(-Y_S|{-}R;\Gamma_{\eta}) \cong \Lambda$.
\end{corollary}

\subsection{Cobordism maps in Heegaard Floer homology}
\label{ssec:cob}
We recall below  the construction of the map on Heegaard Floer homology induced by a 2-handle cobordism of the sort in Theorem \ref{thm:contactsurgeryhf}, as  will need it in the next section.

Suppose $\mathbb{L}$ is a framed link in $Y$ disjoint from an embedded curve $\eta\subset Y$. Let $W$ be the cobordism obtained from $Y\times[0,1]$ by attaching $2$-handles along $\mathbb{L}\times\{1\}$ and  let $\nu\subset W$ be the cylinder $\nu = \eta\times[0,1].$ To define the map on Heegaard Floer homology induced by the cobordism \[({-}W,\nu):({-}Y,\eta)\to ({-}Y',\eta)\]  in the $\Sc$ structure $\spt\in\Sc({-}W)$, one starts with a weakly $\spt$-admissible pointed Heegaard triple diagram  \[(\Sigma,\boldsymbol\gamma,\boldsymbol\beta,\boldsymbol\alpha,z)\]  for  $-W$ which is left-subordinate to the framed link $\mathbb{L}\subset -Y$, as  in \cite[Section 5.2]{osz5}. This admissibility condition means that   every nontrivial triply-periodic domain $P$ which is a sum  of doubly-periodic domains and satisfies \[\langle c_1(\spt),[P]\rangle =0\] has a negative multiplicity. Note that if the pointed triple diagram is weakly $\spt$-admissible then the induced diagrams for $Y_{\gamma\beta}$, $Y_{\beta\alpha}$, and $Y_{\gamma\alpha}$ are weakly admissible for the restrictions of $\spt$ to these $3$-manifolds.

For  this triple diagram, we have that $Y_{\gamma\beta}$ is a connected sum of copies of $S^1\times S^2$,
$Y_{\beta\alpha} = -Y$, and $Y_{\gamma\alpha} = -Y'$, and  there is an intersection point $\Theta\in \Tc\cap\Tb$ such that $[\Theta,0]$ is the unique generator of \[\cfp(\Sigma,\boldsymbol\gamma,\boldsymbol\beta,z;\Gamma_\eta)\] in the top Maslov grading among generators killed by $U$.
The map 
\[\hfp(-W,\spt;\Gamma_\nu): \hfp(-Y,\spt|_{{-}Y};\Gamma_\eta)\to \hfp(-Y',\spt|_{{-}Y'};\Gamma_\eta)\]  is induced by the chain map \begin{equation*}\label{eqn:f+}f^+_{\gamma\beta\alpha,\spt;\Gamma_\nu}:\cfp(\Sigma,\boldsymbol\beta,\boldsymbol\alpha,z,\spt|_{Y_{\beta\alpha}};\Gamma_\eta)\to\cfp(\Sigma,\boldsymbol\gamma,\boldsymbol\alpha,z,\spt|_{Y_{\gamma\alpha}};\Gamma_\eta),\end{equation*} defined on $[\bx,i]$ by \[ f^+_{\gamma\beta\alpha,\spt;\Gamma_\nu}([\bx,i]) =\sum_{\by\in\Tc\cap\Ta}\,\sum_{\substack{\phi\in\pi_2(\Theta,\bx,\by)\\\spc_z(\phi)=\spt\\\mu(\phi)=0}} \#\mathcal{M}(\phi)\cdot [\by,i-n_z(\phi)]\cdot t^{\partial_\alpha(\phi)\cdot \eta}, \] where $\pi_2(\Theta,\bx,\by)$ is the set of homotopy classes of Whitney triangles with vertices at $\Theta,\bx,\by$,  $\mathcal{M}(\phi)$ is the moduli space of holomorphic representatives of $\phi$, and $\spc_z(\phi)$ is the $\Sc$ structure on $-W$ represented by $\phi$. This map is an invariant of the  class $[\nu]\in H_2(-W,-\partial W).$ Ozsv{\'a}th and Szab{\'o} prove in \cite[Theorem 3.3]{osz5} that for each element $x\in\hfp(-Y;\Gamma_\eta)$, \[\hfp(-W,\spt;\Gamma_\nu)(x)=0\] for all but  finitely $\spt\in\Sc(-W)$. Furthermore, they prove the following adjunction inequality \cite[Proof of Theorem 1.5]{osz5}. 

\begin{theorem}[Ozsv{\'a}th--Szab{\'o}]
\label{thm:adj4}
If $Z$ is a connected, embedded surface in $W$ with nonnegative self-intersection and the map $\hfp(-W,\spt;\Gamma_\nu)$ is nonzero  then \begin{equation*}\label{eqn:adj4}|\langle c_1(\spt),[Z] \rangle| +Z\cdot Z \leq \max\{0,-\chi(Z)\}.\end{equation*}\end{theorem}

\section{Proof of Main Theorem}
\label{sec:proof}
Let $(M,\Gamma,\xi)$ be a sutured contact manifold. 

The goal of this section is to prove the following version of our main theorem,  Theorem \ref{thm:main}, stated in terms of the  classes associated to  partial open books compatible with $\xi$ rather than  $c_{\HF}(\xi)$, so as not to rely on  Giroux's correspondence.

\begin{theorem} 
\label{thm:mainob} For any partial open book $\ob$ compatible with $\xi$, there exists an isomorphism of $\Lambda$-modules \[
 \SFH(-M,-\Gamma)\otimes \Lambda \to \SHMt(-M,-\Gamma),\] sending 
 $c_{\HF}(\ob)\otimes 1$ to $c_{\HM}(\xi)$.

\end{theorem}

This  follows, as outlined in the introduction, from the   version of Theorem \ref{thm:main2} below.

\begin{theorem} 
\label{thm:mainob2}For any partial open book $\ob$ compatible with $\xi$, there is
\begin{itemize}
\item a contact closure $(Y,R,\bar\xi,\eta)$  of $(M,\Gamma,\xi)$ and
\item an open book $\bar\ob$ compatible with $\bar \xi$
\end{itemize}
for which  there exists an isomorphism of $\Lambda$-modules \[
 A:\SFH(-M,-\Gamma)\otimes \Lambda \to \hfp(-Y| {-}R; \Gamma_\eta),\] sending 
 $c_{\HF}(\ob)\otimes 1$ to $c_{\HF}(\bar\ob)$.
\end{theorem}

Let us explain  in more detail how Theorem \ref{thm:mainob} follows from Theorem \ref{thm:mainob2}. For this, suppose $\ob$ is a partial open book compatible with $\xi$. Let $(Y,R,\bar\xi,\eta)$, $\bar\ob$, and $A$ be as in the conclusion of Theorem \ref{thm:mainob2}. Theorem \ref{thm:closedequiv2} provides an isomorphism of $\Lambda$-modules \[\Phi_R:\hfp({-}Y|{-}R;\Gamma_{\eta})\to\HMtoc(-Y|{-}R;\Gamma_{\eta})=:\SHMt(-M,-\Gamma)\] sending $c_{\HF}(\bar\ob)$ to \[c_{\HM}(\bar\xi)=:c_{\HM}(\xi),\] where $\Phi_R$ is the sum over ``top" $\Sc$ structures with respect to $-R$, \[\Phi_R:=\sum_{\spc\in\Sc(-Y|{-}R)}\Phi_s.\] The composition \[\Phi_R\circ A: \SFH(-M,-\Gamma)\otimes \Lambda \to \SHMt(-M,-\Gamma)\] is therefore an isomorphism of $\Lambda$-modules sending $c_{\HF}(\ob)\otimes 1$ to $c_{\HM}(\xi)$, as desired.

It just remains to prove Theorem \ref{thm:mainob2}. We  do so as outlined in the introduction (in particular, we show in Subsection \ref{ssec:facthm} that Theorem \ref{thm:mainob2} follows from Theorems  \ref{thm:iso} and \ref{thm:map}), except that we again take care to talk about the  classes associated to open books rather than to contact structures, in order to make clear that our proof is independent of Giroux's correspondence.

\subsection{Heegaard diagrams for closures and cobordisms}
\label{ssec:hdclosure}
Fix a partial open book \[\ob=(S,P,h,\{c_1,\dots,c_n\})\] compatible with  $(M,\Gamma,\xi)$. Let \[(H(S)=S\times[-1,1],\xi_S)\] be the associated product sutured contact manifold defined in Section \ref{ssec:partialob}, and let \[\mathbb{L}=s_1\cup \dots \cup s_n\] denote the $\partial H(S)$-framed link on the boundary of $H(S)$, where \begin{equation}\label{eqn:si}s_i=(c_i\times\{1\})\cup (\partial c_i\times [-1,1])\cup (h(c_i)\times\{-1\})\end{equation} 
An important ingredient in the proof of Theorem \ref{thm:mainob2} involves understanding the map (denoted in the introduction by $B$) induced by $-W$, where $W$ is the  2-handle cobordism  from a closure of $H(S)$ to a closure   of $(M,\Gamma)$,  corresponding to  surgery on the link $\mathbb{L}$ in the first closure. In order to  understand this map, we first describe pointed  Heegaard diagrams for these closures and a pointed Heegaard triple diagram for this cobordism.

Let $\Sigma$ be the surface formed  by  attaching $1$-handles $H_1,\dots,H_n$ and $H_1',\dots,H_n'$ to $S$, where:
\begin{itemize}
\item the feet of $H_i$ are attached along  the endpoints of $c_i$, \item  the feet of $H_i'$ are attached along  the endpoints of a cocore of $H_i$.
\end{itemize}
We orient $\Sigma$ so that the induced orientation on $S$ as a subsurface of $\Sigma$ is opposite the given orientation on $S$.
For each $i=1,\dots,n$, let $\alpha_i,\beta_i,\gamma_i$ be embedded curves in $\Sigma$ such that: 
\begin{itemize}
\item $\alpha_i$ is the union of $c_i$ with a core of $H_i$,
\item  $\beta_i$ is the union of a cocore of $H_i$ with a core of $H_i'$,
\item $\gamma_i$ is the union of $h(c_i)$ with a core of $H_i$.
\end{itemize}
We require that these curves intersect  in the region $H_i\subset \Sigma$ in the manner shown on the right in Figure \ref{fig:HH2}. 

\begin{figure}[ht]
\labellist
\tiny
\pinlabel $S$ at 40 8
\pinlabel $\Sigma$ at 175 8
\pinlabel $c_i$ at -7 46
\pinlabel $c_i$ at -7 145
\pinlabel $\alpha_i$ at 130 146
\pinlabel $\beta_i$ at 299 108
\pinlabel $\gamma_i$ at 130 133

\pinlabel $H_i$ at 250 155
\pinlabel $H_i'$ at 341 89

\pinlabel $\Theta^i$ at 423 156
\pinlabel $c_{\beta\alpha}^i$ at 485 169
\pinlabel $c_{\gamma\alpha}^i$ at 504 84
\pinlabel $w_i^+$ at 473 140
\pinlabel $w_i^-$ at 488 35

\endlabellist
\centering
\includegraphics[width=12.5cm]{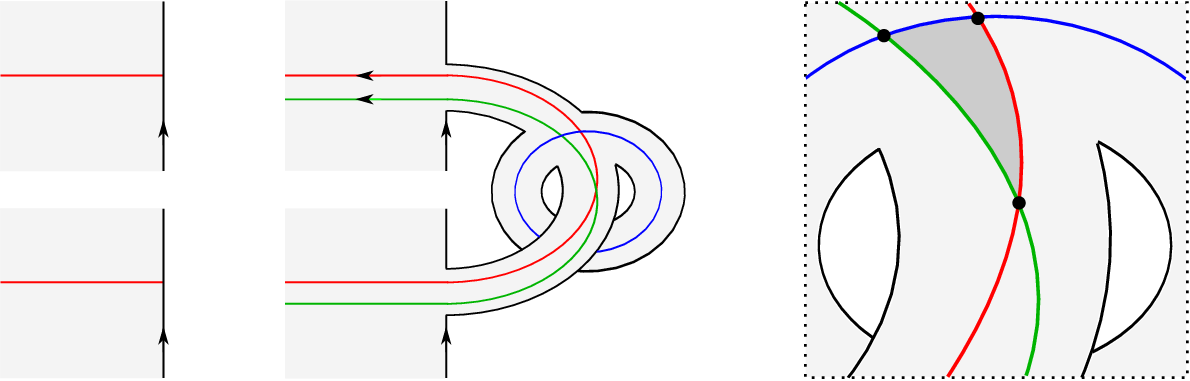}
\caption{On the left, a portion of $S$ near $c_{\gamma\alpha}^i$. In the middle, the corresponding portion of $\Sigma$ with the curves $\alpha_i,\beta_i,\gamma_i$. On the right, a closeup of these curves in $H_i$. We have labeled the intersection points $\Theta^i, c_{\beta\alpha}^i,c_{\gamma\alpha}^i$ and the points $w_i^{\pm}$, and have shaded the triangle $\Delta^i$.}
\label{fig:HH2}
\end{figure}

\begin{remark}
\label{rmk:three}Note that the sutured Heegaard diagram \begin{equation}\label{eqn:diagstab}(\Sigma,\{\beta_1,\dots,\beta_n\},\{\alpha_1,\dots,\alpha_n\})\end{equation} is an $n$-fold stabilization of the standard diagram for $-H(S)$. Meanwhile, the sutured  diagram  \begin{equation}\label{eqn:diagob}(\Sigma, \{\gamma_1,\dots,\gamma_n\},\{\alpha_1,\dots,\alpha_n\})\end{equation} is obtained from the standard Heegaard diagram for $(-M,-\Gamma)$ associated with  the partial open book $\ob$, as described in Subsection \ref{ssec:sfh}, by attaching the handles $H_1',\dots,H_n'$. In particular, it is a sutured Heegaard diagram for the sutured manifold obtained from $(-M,-\Gamma)$ by attaching $n$ contact $1$-handles. We will  ignore this difference, however, and think of the Heegaard diagram in \eqref{eqn:diagob} as encoding $(-M,-\Gamma)$ since (1) there is a canonical isomorphism 
\begin{align*}
H_*(SFC(\Sigma,\{\gamma_1,\dots,\gamma_n\},\{\alpha_1,\dots,\alpha_n\}))&\cong H_*(SFC(\Sigma\ssm (\cup_i H_i'),\{\gamma_1,\dots,\gamma_n\},\{\alpha_1,\dots,\alpha_n\}))\\
&=: SFH(-M,-\Gamma),
\end{align*} and (2) a contact closure of a sutured manifold obtained from $(-M,-\Gamma)$ via contact $1$-handle attachments is also a contact closure of $(-M,-\Gamma)$, as explained in \cite[Section~4.2.2]{bsSHM}.
\end{remark}

We now describe a Heegaard triple diagram which  encodes closures of the sutured manifolds specified by the Heegaard diagrams in \eqref{eqn:diagstab} and \eqref{eqn:diagob} as well as the cobordism $-W$. Let $T$ be a compact, oriented, connected surface with boundary and $g(T)\geq 2$, such that \[\pi_0(\partial T) \cong \pi_0(\partial \Sigma) \cong \pi_0(\partial S).\] Let $R_S$ be the closed, oriented surface formed by gluing $T$ to $S$ by a diffeomorphism of their boundaries. Let $R_\Sigma$ be the  surface formed by gluing $T$ to $\Sigma$ in a similar manner.  Let $D_a$ and $D_b$ be two disjoint disks in $T$. Let $\ul{R}_S$ and $\ul{R}_\Sigma$ denote the complements of these  disks in $R_S$ and $R_\Sigma$, 
\begin{align*}
\ul{R}_S&=R_S\ssm D_a \ssm D_b,\\
\ul{R}_\Sigma&=R_\Sigma\ssm D_a \ssm D_b,
\end{align*} and let \[\ul\Sigma =  \ul{R}_S\cup \ul{R}_\Sigma\] be the closed surface formed by gluing these complements together by the identity maps on $\partial D_a$ and $\partial D_b$. In other words, $\ul\Sigma$ is obtained by connecting $R_S$ and $R_\Sigma$ via two tubes. We will think of the $\alpha_i,\beta_i,\gamma_i$  curves above as lying in $\ul{R}_\Sigma\subset \ul\Sigma.$ See the middle diagram in Figure \ref{fig:annuluseg} for an illustration of $\ul\Sigma = \ul R_S\cup \ul R_\Sigma$ in the case that $S$ is an annulus, $h$ is a right-handed Dehn twist around the core, $\{c_1,\dots,c_n\}$ consists of just the cocore $c_1$, and $T$ is a genus 2 surface with 2 boundary components.
\begin{figure}[ht]
\labellist
\pinlabel $\ul R_\Sigma$ at 220 360
\pinlabel $\ul R_S$ at 640 360
\pinlabel $\longrightarrow$ at 220 900
\pinlabel $=$ at 574 900

\tiny
\pinlabel $c_1$ at 85 957
\pinlabel $h(c_1)$ at -25 905
\pinlabel $S$ at 118 790
\pinlabel $\Sigma$ at 390 790
\pinlabel $\Sigma$ at 718 790

\pinlabel $\Sigma$ at 115 430
\pinlabel $S$ at 730 430
\pinlabel $T\ssm D_a\ssm D_b$ at 565 430
\pinlabel $T\ssm D_a\ssm D_b$ at 285 430
\pinlabel $\partial D_a$ at 390 465
\pinlabel $\partial D_b$ at 390 625

\endlabellist
\centering
\includegraphics[width=11.5cm]{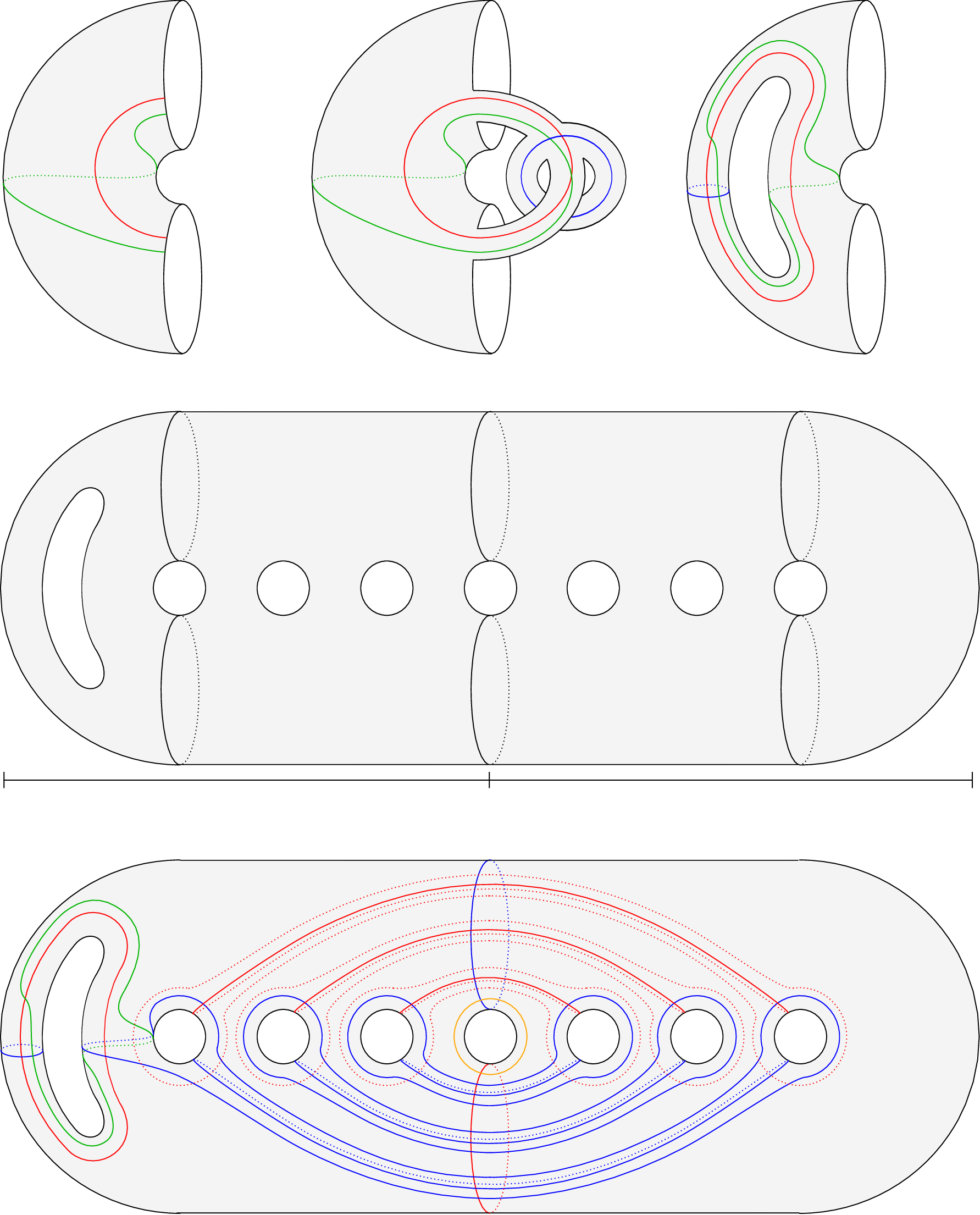}
\caption{Top left, the annulus $S$ with basis arc $c_1$ and its image under the right-handed Dehn twist around the core (it looks like a left-handed Dehn twist because we reverse the orientation of $S$ in forming $\Sigma$). Top middle and right, $\Sigma$ and the curves $\alpha_1,\beta_1,\gamma_1$.  Middle, the corresponding surface $\ul\Sigma =\ul R_\Sigma\cup\ul R_S$. Bottom, the triple diagram $(\ul\Sigma,\gamma,\beta,\alpha)$ without the curves $\gamma_{2},\dots,\gamma_{8}$,  which are just small Hamiltonian translates of $\beta_2,\dots,\beta_8$, and the curve $\delta$ in orange.}
\label{fig:annuluseg}
\end{figure}

\begin{figure}[ht]
\labellist

\pinlabel $A$ at 3 214
\pinlabel $\ul R_S$ at 400 35
\tiny
\pinlabel $180^\circ$ at 659 234
\pinlabel $a_{n+2g}$ at -5 277
\pinlabel $a_{n+2g-1}$ at -13 292
\pinlabel $a_{n+4}$ at -2 338
\pinlabel $a_{n+3}$ at -2 353
\pinlabel $a_{n+2}$ at -2 382
\pinlabel $a_{n+1}$ at -2 398

\pinlabel $b_{n+2g}$ at -5 155
\pinlabel $b_{n+2g{-}1}$ at -13 140
\pinlabel $b_{n+4}$ at -2 99
\pinlabel $b_{n+3}$ at -2 82
\pinlabel $b_{n+2}$ at -2 50
\pinlabel $b_{n+1}$ at -2 34

\pinlabel $d$ at 103 195

\endlabellist
\centering
\includegraphics[width=11cm]{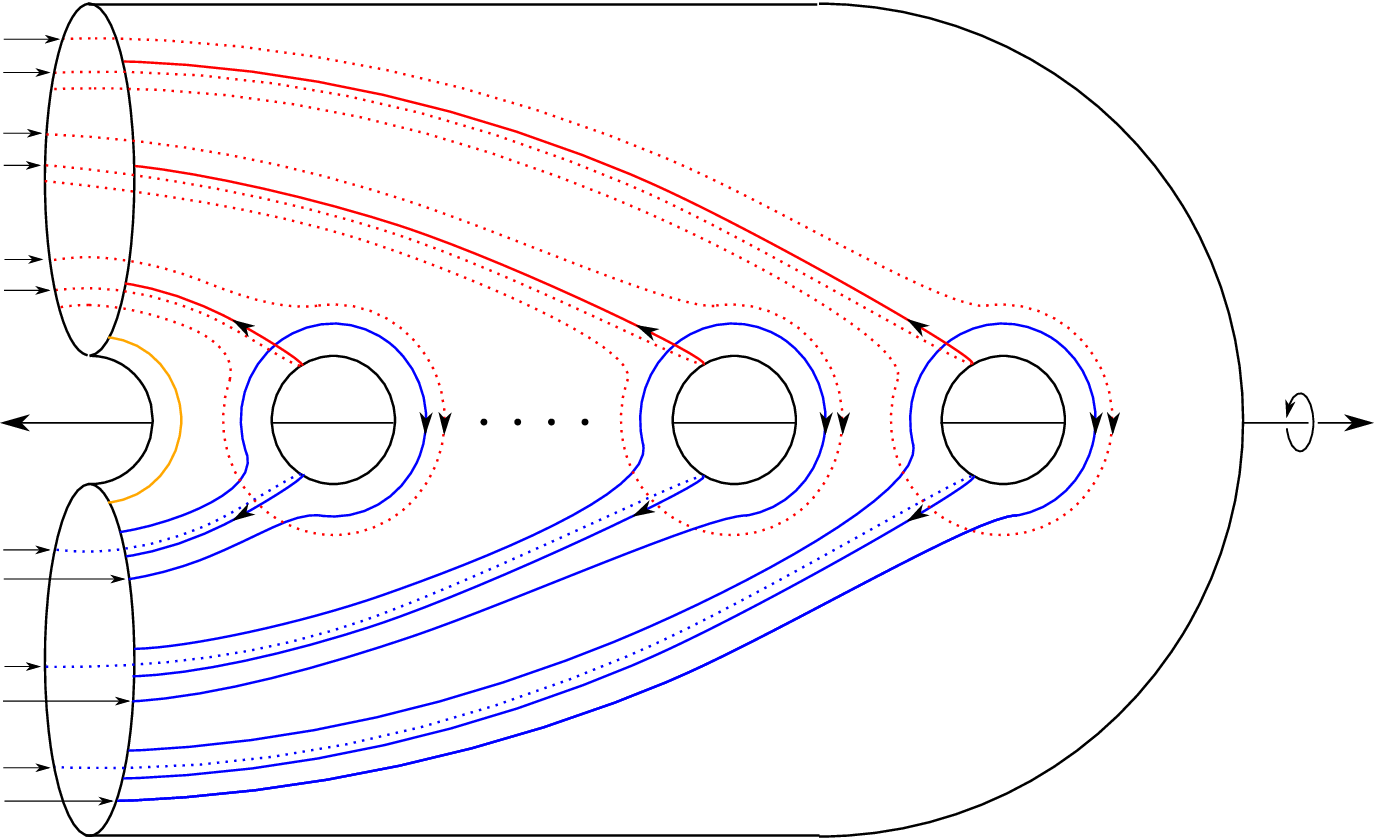}
\caption{The arcs $a_i, b_j$ on $\ul R_S$. The arc $b_i$ is the image of $a_i$ under rotation of $180^\circ$ about the axis $A$. The arc $d$ connects the boundary components of $\ul R_S$.}
\label{fig:TSarcs}
\end{figure}

Suppose $R_S$ has genus $g$. Then $R_\Sigma$ and  $\ul\Sigma$ have genera $n{+}g$ and $n{+}2g{+}1$, respectively. Let \begin{equation}\label{eqn:abarcs}\{a_{n{+}1},\dots,a_{n+2g}\}\textrm{ and }\{b_{n+1},\dots,b_{n+2g}\}\end{equation} be the sets of pairwise disjoint, properly embedded arcs in $\ul R_S$ shown in Figure \ref{fig:TSarcs}. In particular, each $b_i$ is the image of $a_i$ under the  $180^\circ$ rotation of the surface around the axis shown in the figure.  Furthermore, each $b_i$ intersects  $a_{\sigma(i)}$ in exactly one point and is disjoint from $a_j$ for $j\neq \sigma(i)$, where $\sigma$ is the permutation of $\{n{+}1,\dots,n{+}2g\}$ given by 
\begin{equation}
\label{eqn:sigma}\sigma(n{+}i) = 
\begin{cases}
n{+}i{+}1, & i \textrm{  odd}\\
n{+}i{-}1, & i \textrm{  even}\\
\end{cases}
\end{equation}
Note that the $a_i$ have endpoints on $\partial D_b$ and cut $\ul R_S$ into an annulus with  $\partial D_a$ as a boundary component. Likewise, the $b_j$ have endpoints on $\partial D_a$ and cut $\ul R_S$ into an annulus with  $\partial D_b$ as a boundary component. 

Observe that the regular neighborhoods
\begin{align*}
U_i&=N(\alpha_i\cup\beta_i)\subset \Sigma,\\
V_i&=N(\beta_i\cup \gamma_i)\subset \Sigma
\end{align*}
are once-punctured tori. In particular, there exist disks $D_1,\dots,D_n\subset S$ such that \begin{equation}\label{eqn:homo}\ul R_S\ssm (\cup_{i=1}^n D_i)\, \cong\, \ul R_\Sigma \ssm (\cup_{i=1}^n U_i)\, \cong\, \ul R_\Sigma \ssm (\cup_{i=1}^n V_i).\end{equation} We may assume that these $D_i$, as disks in $\ul R_S$, are disjoint from the arcs in \eqref{eqn:abarcs}.
Let 
\begin{align}
\label{eqn:vara}\varphi_\alpha&:  \ul R_S \ssm (\cup_{i=1}^n D_i) \to \ul R_\Sigma\ssm (\cup_{i=1}^n U_i),\\
\label{eqn:varc}\varphi_\gamma&:  \ul R_S \ssm (\cup_{i=1}^n D_i) \to \ul R_\Sigma\ssm (\cup_{i=1}^n V_i)
\end{align}
 be diffeomorphisms which restrict to the identity map on $T\ssm D_a \ssm D_b$. For $i=n{+}1,\dots, n{+}2g$, let $\alpha_i,\beta_i$ be the curves  in $\ul \Sigma= \underline{R}_S \cup \underline{R}_\Sigma$ given by
\begin{align*}
\alpha_i &= a_i \cup \varphi_\alpha(a_i),\\
\beta_i &= b_i\cup \varphi_\gamma(b_i).
\end{align*}
 Let 
 \begin{align*}
 \alpha_{n{+}2g{+}1} &= \partial D_a\subset \ul\Sigma,\\ 
  \beta_{n{+}2g{+}1} &= \partial D_b\subset \ul\Sigma.
  \end{align*} For  $n{+}1,\dots,n{+}2g{+}1$, let $\gamma_i$ be a small Hamiltonian translate  of $\beta_i$  in $\ul \Sigma$ such that $\beta_i$ and $\gamma_i$ intersect in exactly two points, both contained in $\ul R_S\subset \ul \Sigma$. See Figure \ref{fig:TSarcs2} for a closeup  near the intersections of the curves $\alpha_i,\beta_i, \gamma_i$, for $i=n{+}1,\dots,n{+}2g$, on $\ul R_S\subset \ul\Sigma$. Let
\begin{align*}
\ba &= \{\alpha_1,\dots,\alpha_{n+2g+1}\},\\
\bb &=\{\beta_1,\dots,\beta_{n+2g+1}\},\\
\bc &= \{\gamma_1,\dots,\gamma_{n+2g+1}\}.
\end{align*}

We claim the following:
\begin{itemize}
\item $(\ul\Sigma, \boldsymbol\beta, \boldsymbol\alpha)$ is a  Heegaard diagram for a closure $Y_{\beta\alpha} = -Y_S$ of $-H(S)$,
\item $(\ul\Sigma, \boldsymbol\gamma, \boldsymbol\alpha)$ is a  Heegaard diagram for a closure $Y_{\gamma\alpha} = -Y$ of $(-M,-\Gamma)$,\footnote{Really, $-Y_S$ and $-Y$ are the 3-manifolds underlying closures of $-H(S)$ and $(-M,-\Gamma)$; a closure, as defined in Subsection \ref{ssec:hm}, comes with the additional information of a distinguished surface.}
\item $(\ul\Sigma,\boldsymbol\gamma,\boldsymbol\beta,\boldsymbol\alpha)$ is a  Heegaard triple diagram for the cobordism $W_{\gamma\beta\alpha}= -W:-Y_S\to -Y$, where $W:Y_S\to Y$ is the $2$-handle cobordism corresponding to surgery on the framed link $\mathbb{L}\subset \partial H(S)\subset Y_S$.
\end{itemize}
These claims will be unsurprising to the expert. These sorts of Heegaard diagrams for closures are used in Lekili's work \cite{lekili2} and are  very similar to those in Ozsv{\'a}th-Szab{\'o}'s work \cite[Section 3]{osz1}. Nevertheless, we provide an explanation below. We will explain the first of these Heegaard diagrams in depth; the claim regarding the second admits a similar explanation. Along the way, we identify the distinguished surfaces 
\begin{align*}
-R_{\beta\alpha}=-R&\textrm{ in } Y_{\beta\alpha} = -Y_S, \textrm{ and}\\
-R_{\gamma\alpha}=-R&\textrm{ in }Y_{\gamma\alpha} = -Y.
\end{align*}
We will then address the third claim, regarding the Heegaard triple diagram for the cobordism. 

\begin{remark}
As indicated above, we will often use $-R$ to refer to the distinguished surfaces in $-Y_S$ and $-Y$. When more specificity is desired, we will use $-R_{\beta\alpha}$ and $-R_{\gamma\alpha}$, respectively.
\end{remark}

\begin{figure}[ht]
\labellist
\tiny
\pinlabel $\beta_i$ at -5 68
\pinlabel $\alpha_{\sigma(i)}$ at 7 321
\pinlabel $\gamma_i$ at -5 103

\pinlabel $\beta_j$ at 13 -5
\pinlabel $\gamma_j$ at 58 -5

\pinlabel $\alpha_{{\sigma(i)}}$ at 372 18

\pinlabel $\beta_j$ at 378 304
\pinlabel $\alpha_{\sigma(j)}$ at 372 62
\pinlabel $\gamma_j$ at 403 322

\pinlabel $x_{\beta\alpha}^i$ at 96 276
\pinlabel $x_{\gamma\alpha}^i$ at 136 242
\pinlabel $x_{\beta\alpha}^j$ at 476 265
\pinlabel $x_{\gamma\alpha}^j$ at 450 234
\pinlabel $\Theta^i$ at 52 227
\pinlabel $\Theta^j$ at 440 287

\endlabellist
\centering
\includegraphics[width=10cm]{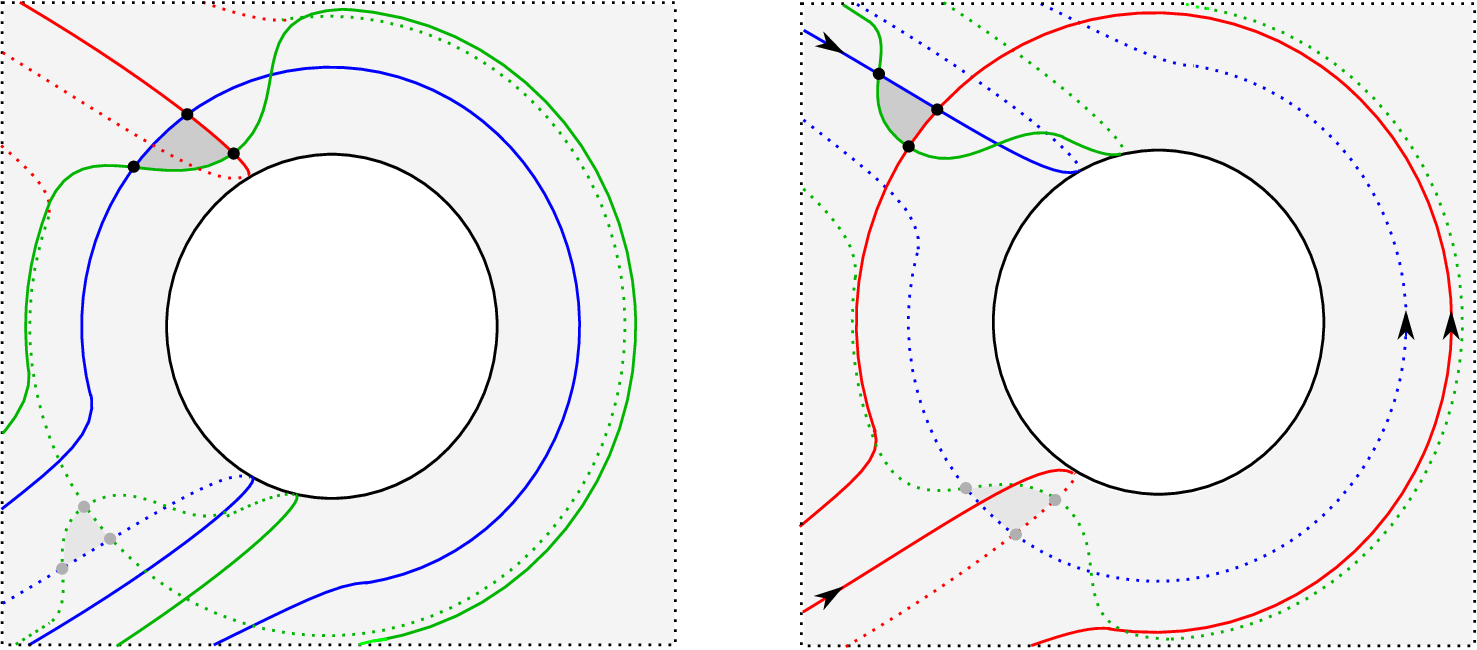}
\caption{Intersections of the $\alpha_i,\beta_i,\gamma_i$ curves for $i=n{+}1,\dots,n{+}2g$. The figures on the left and right show the same curves, after rotating this portion of $\ul R_S$ by $180^\circ$ about the axis $A$ of Figure \ref{fig:TSarcs}. We have labeled the intersection points $\Theta^i, x_{\beta\alpha}^i,x_{\gamma\alpha}^i$ and shaded the triangle $\Delta^i$; likewise for $j=\sigma(i)$. }
\label{fig:TSarcs2}
\end{figure}

Recall from Remark \ref{rmk:three} that \[(\Sigma,\{\beta_1,\dots,\beta_n\},\{\alpha_1,\dots,\alpha_n\})\] is a Heegaard diagram for the sutured manifold $-H(S)$. It follows that \[(R_\Sigma,\{\beta_1,\dots,\beta_n\},\{\alpha_1,\dots,\alpha_n\})\] specifies the preclosure $P$ obtained from $-H(S)$ by attaching $T\times[-1,1]$ in the usual way. That is, $P$ is obtained from $R_\Sigma\times[-1,1]$ by attaching thickened disks to $R_\Sigma\times\{-1\}$ along  $\beta_1,\dots,\beta_n$ and to $R_\Sigma\times\{1\}$ along  $\alpha_1,\dots,\alpha_n$. Let us   denote these unions of thickened disks by $C_{\beta}$ and $C_\alpha$, respectively, so that \[P = R_\Sigma\times[-1,1]\cup C_{\beta} \cup C_\alpha.\] The boundary of $P$ consists of two homeomorphic components, $\partial P = \partial_+P\sqcup -\partial_-P.$ Let \[R_{\beta\alpha} := \partial_+P.\] Per Remark \ref{rmk:altclosure}, 
one may  form a closure $Y_{\beta\alpha}=-Y_S$ of $-H(S)$ by gluing $R_{\beta\alpha}\times[-1,1]$ to $P$, and one may do so in such a way that
\begin{itemize}
\item $D_a\times\{\mp 1\}\subset R_{\beta\alpha}\times\{\mp 1\}$ is identified with $D_a\times\{\pm 1\}\subset \partial_{\pm}P$, and
\item $D_b\times\{\mp 1\}\subset R_{\beta\alpha}\times\{\mp 1\}$ is identified with $D_b\times\{\pm 1\}\subset \partial_{\pm}P$.
\end{itemize} It follows from Remark \ref{rmk:orientations} that the distinguished surface $-R\subset -Y_S$ may    be identified with $-R_{\beta\alpha}$. The diagram on the left in Figure \ref{fig:schematichd} is a schematic illustration of this closure. Note that the disjoint union  $R_\Sigma\times\{0\}\sqcup R_{\beta\alpha}\times\{0\}$ separates $Y_{\beta\alpha}$ into two pieces $V_{\beta}$ and $V_\alpha$, containing $C_{\beta}$ and $C_\alpha$, respectively.  Let $T_\beta$ and $T_\alpha$ be the tubes in $V_\alpha$ and $V_{\beta}$, respectively, defined by
\begin{align*}
T_\beta &= (D_b\times[0,1]\subset R_\Sigma\times[0,1])\cup (D_b\times[-1,0]\subset R_{\beta\alpha}\times[-1,0]),\\
T_\alpha &= (D_a\times[-1,0]\subset R_\Sigma\times[-1,0])\cup (D_a\times[0,1]\subset R_{\beta\alpha}\times[0,1]).
\end{align*}
Then the handlebodies 
\begin{align*}
H_\beta &= \overline{V_\beta\ssm T_\alpha} \cup T_\beta,\\
H_\alpha &= \overline{V_\alpha\ssm T_\beta} \cup T_\alpha
\end{align*} provide a Heegaard splitting of $Y_{\beta\alpha}$, as indicated in the diagram on the right in Figure \ref{fig:schematichd}. The Heegaard surface in this splitting is therefore obtained by connecting $R_\Sigma\times\{0\}$ and  $R_{\beta\alpha}\times\{0\}$ via two tubes. In particular, since $R_{\beta\alpha}\cong R_S$, this Heegaard surface may be identified with $\ul\Sigma$. Under this identification,   one sees that the Heegaard diagram $(\ul \Sigma,\boldsymbol\beta,\boldsymbol\alpha)$ specifies a splitting of precisely this form, where the periodic domains $\ul R_\Sigma$ and $-\ul R_S$ in $\ul \Sigma$ represent $R_{\beta\alpha}.$ Similar reasoning shows that $(\ul\Sigma,\boldsymbol\gamma,\boldsymbol\alpha)$ determines an analogous Heegaard splitting of a closure $Y_{\gamma\alpha}=-Y$ of $(-M,-\Gamma)$, with distinguished surface $-R_{\gamma\alpha}=-R.$

\begin{figure}[htbp]
\labellist
\tiny
\pinlabel $R_\Sigma\times[-1,1]$ at 66 132
\pinlabel $C_\beta$ at 70 112
\pinlabel $C_\alpha$ at 70 152
\pinlabel $R_{\beta\alpha}\times[-1,0]$ at 158 230
\pinlabel $R_{\beta\alpha}\times[0,1]$ at 158 34
\pinlabel $R_{\beta\alpha}\times\{0\}$ at 245 125

\pinlabel $V_\alpha$ at 497 215
\pinlabel $V_\beta$ at 497 20

\pinlabel $T_\beta$ at 608 162

\pinlabel $T_\alpha$ at 590 106

\endlabellist
\centering
\includegraphics[width=13cm]{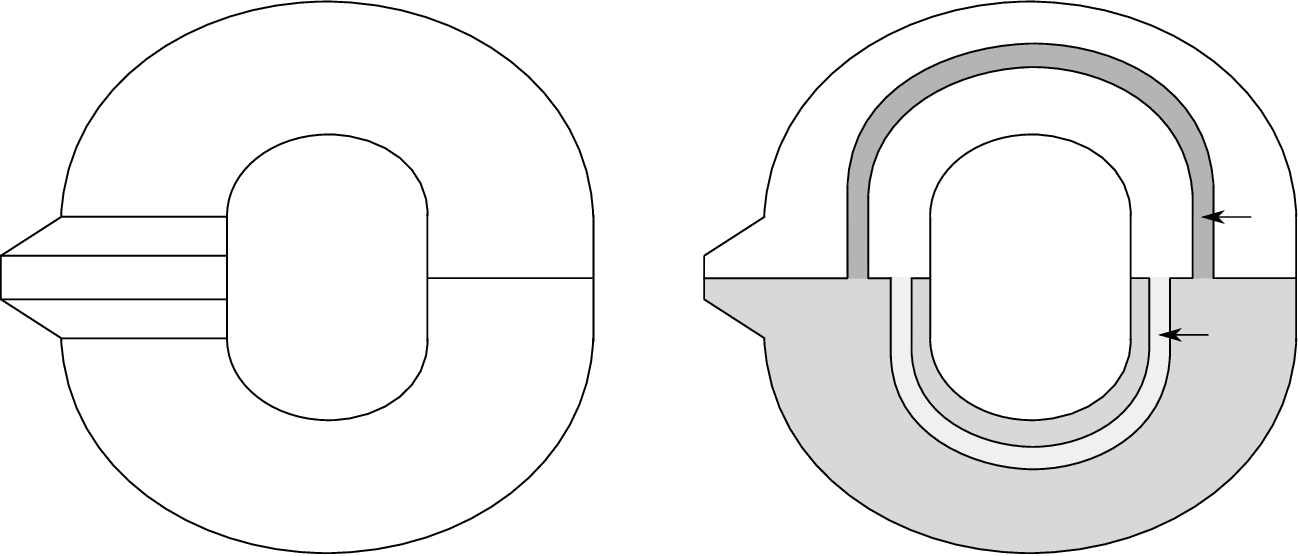}
\caption{Left, a schematic of the closure $Y_{\beta\alpha}$. Right, a schematic of the standard Heegaard splitting of the closure: the handlebody $H_\alpha$ is the union of $V_\alpha$ and $T_\alpha$, shown in white and light gray, respectively; the handlebody $H_\beta$ is the union of $V_{\beta}$ and $T_\beta$, shown in medium and dark gray, respectively.}
\label{fig:schematichd}
\end{figure}

We turn now to the claim about the triple diagram. Viewing  $-H(S)$ as the sutured manifold determined by the sutured Heegaard diagram \[(\Sigma,\{\beta_1,\dots,\beta_n\},\{\alpha_1,\dots,\alpha_n\}),\] we first observe that $\gamma_i\subset \Sigma$ is isotopic in $-H(S)$ to the curve $s_i$ in \eqref{eqn:si}, for $i=1,\dots,n$, and that the $\Sigma$-framing of $\gamma_i$ coincides with the $\partial H(S)$-framing of $s_i$. Indeed, this observation only requires thinking about how $\Sigma$ is embedded in $-H(S)$. In particular,  the subsurface \[H_1 \cup\dots \cup H_n\cup S\subset \Sigma\] on which the $\gamma_i$ lie  is isotopic in $-H(S)$ to the union \[(A_1\cup\dots\cup A_n)\times\{1\}\,\cup\,\partial S\times[-1,1]\,\cup\,S\times\{-1\},\] where each $A_j$ is a rectangular neighborhood of $c_j\subset S$ (note that  $A_j$ is disjoint from $\boldsymbol\alpha$ after isotoping the latter, which is why $A_j$ can be pushed into $S\times\{1\}$). Furthermore, this isotopy carries $\gamma_i$  to the union \[(c_i\times\{1\} )\,\cup\,(\partial c_i\times[-1,1])\,\cup\, (h(c_i)\times\{-1\}),\] which is precisely the curve $s_i$. Moreover, it is also not hard to see that $\beta_i$ bounds a meridional disk for $\gamma_i$. Alternatively,  the above observation is made clear in Figure \ref{fig:surgerytriple}.

This observation, combined with the facts that 
\begin{itemize}\item  $\gamma_i$ intersects $\beta_i$ in exactly one point and is disjoint from the other curves in $\boldsymbol\beta$, for $i=1,\dots,n$, and 
\item $\gamma_i$ is isotopic to $\beta_i$ for $i=n+1,\dots,n+2g+1$,
\end{itemize} is equivalent to the statement that \[(\ul\Sigma,\boldsymbol\gamma,\boldsymbol\beta,\boldsymbol\alpha)\] is a left-subordinate Heegaard triple diagram for the cobordism $W_{\gamma\beta\alpha}=-W$ associated to surgery on the  framed link \[\mathbb{L}=s_1\cup\dots\cup s_n\subset Y_{\beta\alpha},\] as  in \cite{osz5}. In particular, \[(\ul\Sigma,\{\beta_{n+1},\dots,\beta_{n+2g+1}\},\alpha)\] specifies the complement of a bouquet for this framed link.

\begin{figure}[ht]
\labellist

\tiny
\pinlabel $=$ at 540 80

\endlabellist
\centering
\includegraphics[width=13cm]{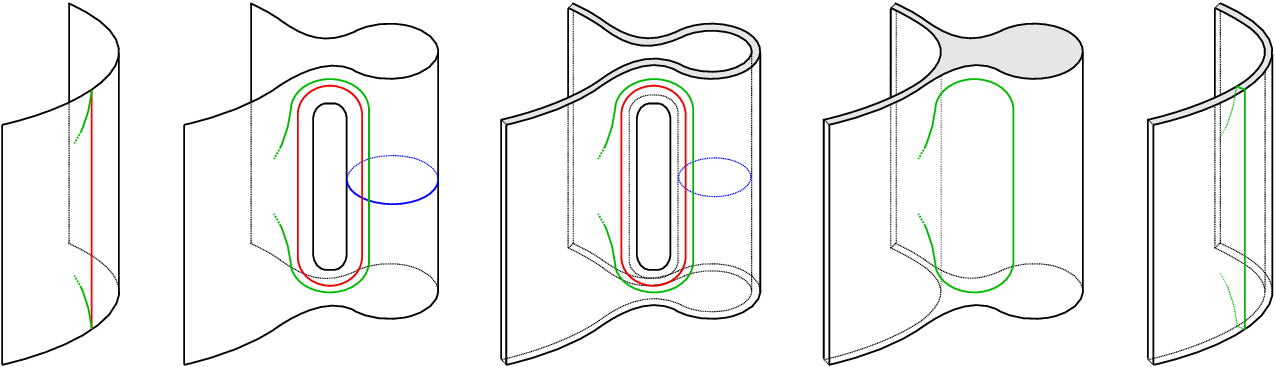}
\caption{From left to right: the first picture shows a neighborhood in $S$ of the arc $c_i$. The arc $c_i$ is shown in red,  its image $h(c_i)$ is shown in green. The second picture shows the result of attaching the $1$-handles $H_i$ and $H_i'$ to $S$ to form $\Sigma$. The curve $\alpha_i$ is shown in red, $\gamma_i$ in green, $\beta_i$ in blue. The third picture shows the corresponding curves on $\Sigma\times[-1,1]$. The fourth picture shows the result of attaching thickened disks to $\Sigma\times[-1,1]$ along $\alpha_i$ and $\beta_i$. The resulting manifold is  $S\times[-1,1]$, as is evident by comparing the fourth and fifth pictures, and  $\gamma_i$ is isotopic to $s_i$, where $s_i$ is shown in green in the last picture. It is also clear from this diagram that the $\Sigma$-framing of $\gamma_i$ is taken to the $\partial H(S)$-framing of $s_i$ under this isotopy. }
\label{fig:surgerytriple}
\end{figure}

\begin{remark}We will   use  $-R$ to refer to the isotopic  surfaces  $-R_{\beta\alpha}$ and $-R_{\gamma\alpha}$ in $-W$.
\end{remark}

To define Heegaard Floer homology groups and the map between them from these diagrams,  we must specify the location of the basepoint $z$. For this, let us   wind $\alpha_{n{+}2g{+}1}$ halfway along the curve $\delta$, as in  Figure \ref{fig:uvuv2}, so that it meets each of $\beta_{n{+}2g{+}1}$ and $\gamma_{n{+}2g{+}1}$ in  two points \[u_{\beta\alpha},v_{\beta\alpha}\textrm{ and } u_{\gamma\alpha},v_{\gamma\alpha},\] respectively, as indicated in the figure. We  place the basepoint $z$ in the small bigon  created by this isotopy, as shown  in the figure.

\begin{figure}[ht]
\labellist
\tiny
\pinlabel $z$ at 438 267

\pinlabel $\delta$ at 61 106
\pinlabel $u_{\beta\alpha}$ at 714 274
\pinlabel $v_{\beta\alpha}$ at 714 140
\pinlabel $u_{\gamma\alpha}$ at 642 269
\pinlabel $v_{\gamma\alpha}$ at 650 122
\pinlabel $\Theta^{n{+}2g{+}1}$ at 744 327
\pinlabel $\alpha_{n{+}2g{+}1}$ at 162 66
\pinlabel $\beta_{n{+}2g{+}1}$ at 162 292
\pinlabel $F$ at 330 60
\pinlabel $E$ at 520 60


\small
\pinlabel $z$ at 740 200

\endlabellist
\centering
\includegraphics[width=11cm]{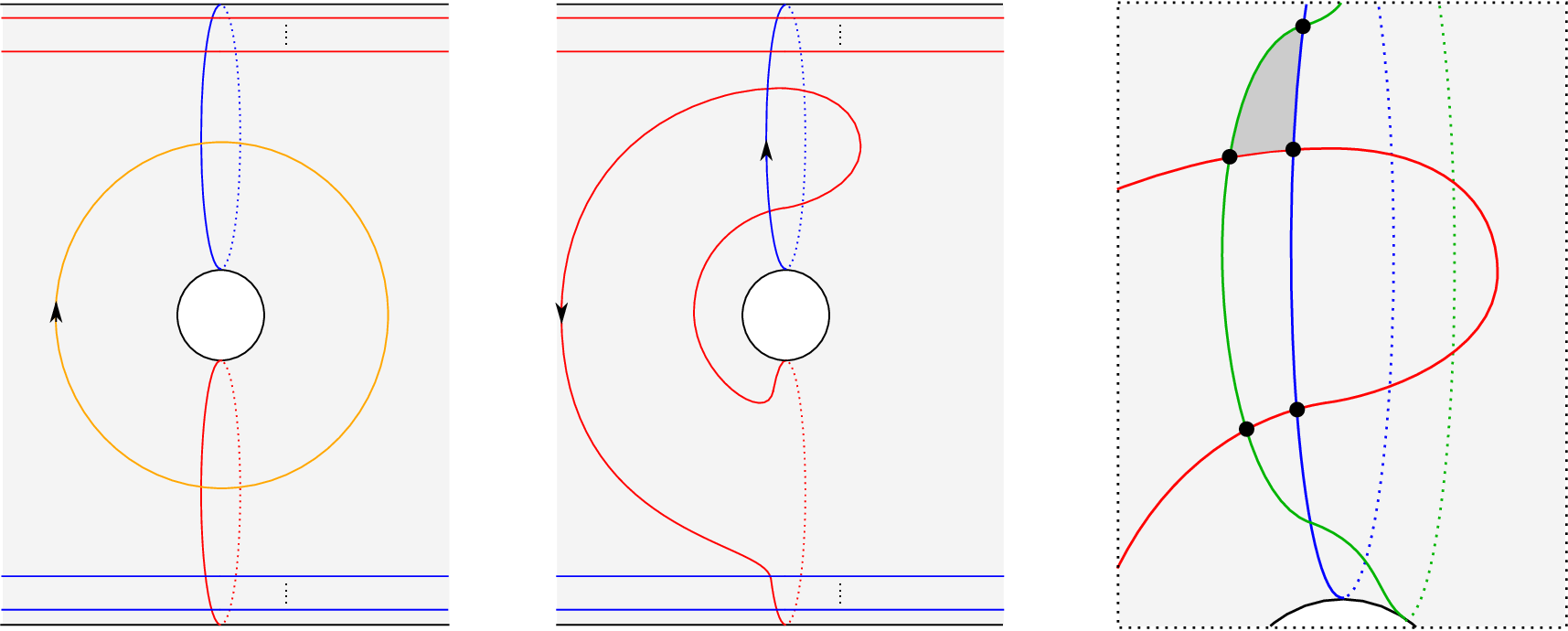}
\caption{Left, before winding. Middle, after winding, and the basepoint $z$. Right, a closeup  near the intersection points $u_{\beta\alpha},v_{\beta\alpha},u_{\gamma\alpha},v_{\gamma\alpha},$ and $\Theta^{n{+}2g{+}1}$. The triangle $\Delta^{n{+}2g{+}1}$ is shaded.}
\label{fig:uvuv2}
\end{figure}

Finally, let us fix an oriented, embedded curve \begin{equation}\label{eqn:etatwisting}\eta\subset T\ssm D_a\ssm D_b\subset \ul R_{S}\subset \ul\Sigma\end{equation} which is dual to a nonseparating curve $c$ in $T$.  This  defines curves in $Y_{\beta\alpha}=-Y_S$ and $Y_{\gamma\alpha}=-Y$ which we will also denote by $\eta$.  Let \[\nu = \eta\times I\subset -W\] be the cylindrical cobordism from $\eta\subset -Y_S$ to $\eta\subset -Y$. These $\eta$ and   $\nu$ will be used to define both the Heegaard Floer homology groups of $-Y_S$ and $-Y$, with local coefficients in $\Gamma_\eta$, and the map induced by $-W$ between these groups, as in Subsections \ref{ssec:hflocal} and \ref{ssec:cob}.

\begin{remark}
The condition on $\eta$ above along with  the condition that the maps $\varphi_\alpha$ and $\varphi_\gamma$ in \eqref{eqn:vara} and \eqref{eqn:varc} restrict to the identity on $T\ssm D_a \ssm D_b$ ensures that the triple $(Y_S,R,\eta)$ meets the topological requirements for a contact closure; namely, that  it  is formed from a preclosure $P$ by gluing $\partial_+P$ to $\partial_-P$ by a map which sends  $c\times\{+1\}$ to $c\times\{-1\}$ for some nonseparating  curve $c$ in the auxiliary surface, with $\eta$ dual to $c$, as described in Subsection \ref{ssec:hm}.
\end{remark}

\subsection{Periodic domains and winding} 
\label{ssec:periodic}
In this subsection, we catalogue certain important periodic domains in the pointed Heegaard triple diagram $(\ul\Sigma,\boldsymbol\gamma,\boldsymbol\beta,\boldsymbol\alpha)$ and  introduce a procedure called \emph{winding}. This setup will be used crucially for results in later subsections.

Let us henceforth orient $\gamma_i,$ $\beta_i$, and $\alpha_i$ as in
\begin{itemize}
\item Figure \ref{fig:HH2} for $i=1,\dots,n$ (the $\beta_i$ here are not oriented; see below)
\item Figures \ref{fig:TSarcs} and \ref{fig:TSarcs2} for $i=n{+}1,\dots,n{+}2g$, and 
\item Figure \ref{fig:uvuv2} for $i=n{+}2g{+}1$.
\end{itemize} We orient each $\gamma_i$ in the same direction as the parallel  $\beta_i$ for $i=n{+}1,\dots,n{+}2g{+}1$. We will not need to orient $\beta_1,\dots,\beta_n$, which is why we have not specified an orientation on the $\beta_i$ in Figure \ref{fig:HH2}.

Let $\P_{\beta\alpha}$ and $\P_{\gamma\alpha}$ be the $(\boldsymbol\beta,\boldsymbol\alpha)$- and $(\boldsymbol\gamma,\boldsymbol\alpha)$-periodic domains in $(\ul\Sigma,\boldsymbol\gamma,\boldsymbol\beta,\boldsymbol\alpha)$ with multiplicities $2$ and $1$ in the regions $\Sigma$ and $S$ of $\ul\Sigma$, respectively, and with 
\begin{align*}
\partial \P_{\beta\alpha} &= \beta_{n{+}2g{+}1} - \alpha_{n{+}2g{+}1} ,\\
\partial \P_{\gamma\alpha} &= \gamma_{n{+}2g{+}1}- \alpha_{n{+}2g{+}1},
\end{align*}
as  in Figure \ref{fig:periodic}. Observe that $\P_{\beta\alpha}$ corresponds, in the diagram $(\ul \Sigma, \boldsymbol\beta,\boldsymbol\alpha)$  \emph{before} winding  $\alpha_{n{+}2g{+}1}$ as in Figure \ref{fig:uvuv2}, to the periodic domain  $2\ul R_\Sigma + \ul R_S$. Since the periodic  domains $\ul R_\Sigma$ and $-\ul R_S$ in that diagram both represent $R_{\beta\alpha}\subset Y_{\beta\alpha}$, as explained in the previous subsection, it follows that  the periodic domain $\P_{\beta\alpha}$ represents the homology class \[2[R_{\beta\alpha}]-[R_{\beta\alpha}]=[R_{\beta\alpha}]\in H_2(Y_{\beta\alpha};\mathbb{Z}).\] By the same argument, $\P_{\gamma\alpha}$ represents the homology class \[[R_{\gamma\alpha}]\in H_2(Y_{\gamma\alpha};\mathbb{Z}).\] 

\begin{figure}[ht]
\labellist
\tiny
\pinlabel $E$ at 260 35
\pinlabel $F$ at 37 35
\pinlabel $1$ at 190 150
\pinlabel $1$ at 260 65
\pinlabel $0$ at 166 241
\pinlabel $2$ at 119 150
\pinlabel $2$ at 37 65

\endlabellist
\centering
\includegraphics[width=4.3cm]{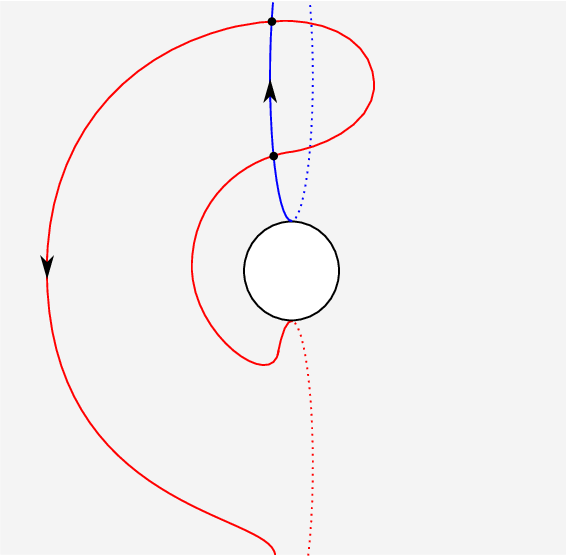}
\caption{The multiplicities of $\P_{\beta\alpha}$ near $\beta_{n+2g+1}$ and $\alpha_{n+2g+1}$. }
\label{fig:periodic}
\end{figure}

For  $i=n{+}1,\dots,n{+}2g{+}1$, let $\P_i$ be denote the small $(\boldsymbol\gamma,\boldsymbol\beta)$-periodic domain in $(\ul\Sigma,\boldsymbol\gamma,\boldsymbol\beta,\boldsymbol\alpha)$ with \[\partial \P_i = \gamma_i-\beta_i.\] This domain has multiplicities $\pm 1$ in two thin bigon regions. We note that \[\{\P_{n+1},\dots,\P_{n+2g+1}\}\] is a basis for the $\Q$-vector space $\ul\Pi_{\gamma\beta}$ of rational $(\boldsymbol\gamma,\boldsymbol\beta)$-periodic domains in  the pointed Heegaard triple diagram $(\ul\Sigma,\boldsymbol\gamma,\boldsymbol\beta,\boldsymbol\alpha,z)$, though we will not really make use of this fact.

Note that there is no triply-periodic domain in the sutured Heegaard triple diagram \[(\Sigma,\{\gamma_1,\dots,\gamma_n\},\{\beta_1,\dots,\beta_n\}, \{\alpha_1,\dots,\alpha_n\})\] whose boundary contains a nonzero multiple of $\beta_i$ for $i=1,\dots,n$. This is because  in the complement of the attaching curves, there are regions on either side of $\beta_i$ which intersect $\partial\Sigma$. Letting $\Pi_{\gamma\alpha}$ and $\Pi_{\gamma\beta\alpha}$  be the $\Q$-vector spaces of rational $(\boldsymbol\gamma,\boldsymbol\alpha)$- and triply-periodic domains, respectively, in this sutured triple diagram, we have just argued that $\Pi_{\gamma\alpha} = \Pi_{\gamma\beta\alpha}$. Let us fix a basis \begin{equation}\label{eqn:basisD}\{\D_1,\dots,\D_p\}\end{equation} for this vector space. Note that $\beta_i$ intersects each of $\gamma_i$ and $\alpha_i$ positively in  one point,  and is disjoint from all other $\boldsymbol\gamma$ and $\boldsymbol\alpha$ curves, for $i=1,\dots,n$. It follows that $\gamma_i$ and $\alpha_i$  occur with  opposite multiplicities in the boundary of every $\D_j$. By  a change of basis,  we may therefore assume that \begin{equation}\label{eqn:basis2}\partial \D_j =\alpha_{i_j}-\gamma_{i_j} +\sum_{k=1}^{i_j-1} a_{j,k}(\alpha_k - \gamma_k),\end{equation} for some integers \[1\leq i_1<\dots<i_p\leq n,\] where 
$a_{j,i_r} =0$ for all $r=1,\dots,p$. 

We will extend this basis in two ways. First, let $\ul\Pi_{\gamma\beta\alpha}^2$ denote the $\Q$-vector space of rational triply-periodic domains in the pointed Heegaard triple diagram $(\ul\Sigma,\boldsymbol\gamma,\boldsymbol\beta,\boldsymbol\alpha,z)$ which are sums of rational doubly-periodic domains. The collection \[\{\P_{\gamma\alpha},\D_1,\dots,\D_p,\P_{n+1},\dots,\P_{n+2g+1}\}\] is linearly independent in $\ul\Pi_{\gamma\beta\alpha}^2$ and therefore extends for some $m$ to a basis \begin{equation}\label{eqn:basisTextended}\{\P_{\gamma\alpha},\D_1,\dots,\D_p,\T_{p+1}\dots,\T_m,\P_{n+1},\dots,\P_{n+2g+1}\}\end{equation} for this vector space. By adding multiples of $\P_i$, we may assume that no $\T_j$ contains a nonzero multiple of $\beta_{i}$ in its boundary, for  $i=n{+}1,\dots,n{+}2g{+}1$. Moreover, for  each such $i$, there is an oriented curve in $\ul\Sigma$ which intersects $\gamma_i$ and $\alpha_i$ positively in exactly one point each, intersects all other $\boldsymbol\gamma$ and $\boldsymbol\alpha$ curves zero times algebraically, and is disjoint from $\beta_1,\dots,\beta_n$ (this is evident from Figure \ref{fig:TSarcs}).  It follows that $\gamma_i$ and $\alpha_i$  occur with  opposite multiplicities in the boundary of each $\T_j$. In particular, by adding multiples of $\P_{\gamma\alpha}$, we may assume that no $\T_j$ contains a nonzero multiple of $\gamma_{n+2g+1}$ or $\alpha_{n+2g+1}$ in its boundary. By further change of basis, we may therefore assume that  \begin{equation}\label{eqn:basisTstd}\partial \T_j =\alpha_{i_j}-\gamma_{i_j} +\sum_{k=1}^{i_j-1} a_{j,k}(\alpha_k - \gamma_k) +\sum_{k=1}^{n} b_{j,k}\beta_k,\end{equation} for some integers
\[
n+1\leq i_{p+1}<\dots<i_m\leq n+2g ,\] where $a_{j,i_r} =0$ for all $r=1,\dots,m$.  

Next, let $\ul\Pi_{\gamma\alpha}$ denote the $\Q$-vector space of rational $(\boldsymbol\gamma,\boldsymbol\alpha)$-periodic domains in the pointed Heegaard  diagram $(\ul\Sigma,\boldsymbol\gamma,\boldsymbol\alpha,z)$. We extend the basis  \eqref{eqn:basisD} to a basis \begin{equation}\label{eqn:basisDextended}\{\P_{\gamma\alpha},\D_1,\dots,\D_p,\D_{p+1},\dots,\D_m\}\end{equation} for this  space (we are reusing the notation $m$; see Remark \ref{rmk:reuse}). By the same reasoning as above, we may assume  for $j=p{+}1,\dots,m$ that \begin{equation}\label{eqn:basisDstd}\partial \D_j =\alpha_{i_j}-\gamma_{i_j} +\sum_{k=1}^{i_j-1} a_{j,k}(\alpha_k - \gamma_k),\end{equation} for some integers
\[
n+1\leq i_{p+1}<\dots<i_m\leq n+2g ,\] where $a_{j,i_r} =0$ for all $r=1,\dots,m$. 
\begin{remark}\label{rmk:reuse}For expository purposes, we are  abusing notation slightly above; namely, we are   repurposing  the notation $m$, $i_{p+1},\dots,i_m$, and $a_{j,k}$ from \eqref{eqn:basisTextended} and \eqref{eqn:basisTstd}. This will not lead to confusion as we will never use the bases \eqref{eqn:basisTextended} and \eqref{eqn:basisDextended} at the same time.
\end{remark}

For much of what follows in this paper, we will need to isotope the $\boldsymbol\alpha$ curves by a procedure called \emph{winding}, described below. 
 
Note that for each $i=n{+}1,\dots,n{+}2g$, there is a homologically essential curve $\nu_i\subset \ul R_S$ such that $\nu_i$ intersects $\alpha_i$ exactly once and is disjoint from all other $\alpha_j$. We may assume that  $\nu_i$ is disjoint from the disks $D_1,\dots,D_n$ in \eqref{eqn:homo}. Let \begin{equation}\label{eqn:etai}\eta_i = \varphi_\alpha(\nu_i)\subset \ul R_\Sigma\subset \ul \Sigma,\end{equation} where $\varphi_\alpha$ is the map in \eqref{eqn:vara}. Then $\eta_i$ also intersects $\alpha_i$ exactly once and is disjoint from the other $\boldsymbol\alpha$ curves. Let $\eta_i^\pm$ and $\eta_i^-$ be parallel copies of $\eta_i$, oriented as on the left in Figure \ref{fig:etawinding}. One may then \emph{wind} $\alpha_i$ along these curves in the directions given by their orientations. The diagram on the right in Figure \ref{fig:etawinding} shows the result of winding $\alpha_i$ once along each of $\eta_i^\pm$. 

\begin{figure}[ht]
\labellist
\tiny
\pinlabel $\alpha_i$ at -7 61
\pinlabel $\eta_i^+$ at 63 129
\pinlabel $\eta_i^-$ at 144 129
\pinlabel $w_i^+$ at 71 31
\pinlabel $w_i^-$ at 152 95
\pinlabel $w_i^+$ at 290 31
\pinlabel $w_i^-$ at 372 95

\endlabellist
\centering
\includegraphics[width=10cm]{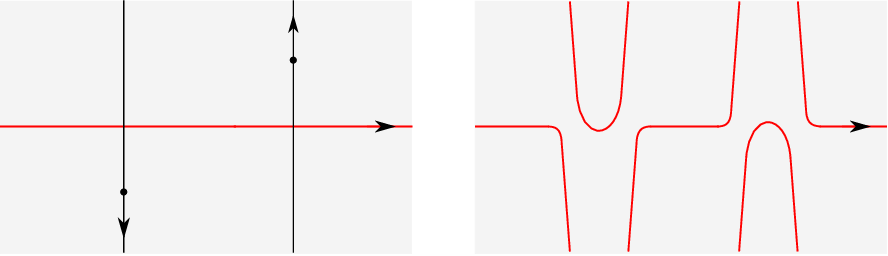}
\caption{Left, the intersection of $\eta_i^+$ and $\eta_i^-$ with $\alpha_i$, and the points $w_i^+$ and $w_i^-$. Right, the curve $\alpha_i$ after winding once along $\eta_i^+$ and $\eta_i^-$.}
\label{fig:etawinding}
\end{figure}

We will need to keep track of the effect of   such winding on the coefficients of various domains of these Heegaard diagrams. In order to do so, we introduce  new basepoints \begin{equation*}\label{eqn:basepts}w_1^+,\dots,w_{n+2g}^+\, \textrm{ and } \,w_1^-,\dots,w_{n+2g}^-\subset \ul R_{\Sigma}\subset \ul\Sigma,\end{equation*} where 
\begin{itemize}
\item $w_i^\pm$  are the points in 
$H_i\subset\Sigma\subset\ul\Sigma$ shown in Figure \ref{fig:HH2}, for   $i=1,\dots,n$,   and 
\item  $w_i^\pm$  are the points   on $\eta_i^\pm$ shown  in Figure \ref{fig:etawinding}, for   $i=n{+}1,\dots,n{+}2g$. 
\end{itemize}
Note that if $D$ is a domain of the diagram $(\ul\Sigma,\boldsymbol\gamma,\boldsymbol\beta,\boldsymbol\alpha,z)$ with \[\partial_{\alpha_i}(D)\cdot \eta_i^{\pm}=\pm a\] for some $i=n{+}1,\dots,n{+}2g$, then the quantity \[n_{w_i^{\pm}}(D)\] changes by $\pm a$ after winding $\alpha_i$ once along $\eta_i^{\pm}$, and is unaffected by all other winding. Furthermore, the quantity \[n_{w_j^{\pm}}(D)\] is unaffected by  winding, for $j=1,\dots,n$. For any such  domain $D$, and any $i=1,\dots,n{+}2g$, we will use the shorthand  \begin{equation}\label{eqn:min}\ul n_{w_i}(D) := \min\{n_{w_i^+}(D),n_{w_i^-}(D)\}.\end{equation}

We  describe below how the rational periodic  domains  in the bases \eqref{eqn:basisTextended} and \eqref{eqn:basisDextended}  behave  with respect to winding. Note first that 
\begin{align}
\label{eqn:pga}n_{w_{i}^\pm}(\P_{\gamma\alpha})&=2 \textrm{ for  all $i$},\\
\label{eqn:pi}n_{w_{i}^\pm}(\P_j)&=0 \textrm{ for  all $i$, and $j=n+1,\dots,n+2g+1$}.
\end{align}
Moreover, for $j=1,\dots,p$, 
\begin{align}
\label{eqn:ndj}n_{w_{i_j}^\pm}(\D_j) &= \pm 1,\\
\label{eqn:ndk}n_{w_{i_k}^\pm}(\D_j) &= 0 \textrm{ for  all $k\neq j$}.
\end{align}
The quantities in \eqref{eqn:pga}-\eqref{eqn:ndk} are not affected by winding. On the other hand, for  $j=p{+}1,\dots,m$, \[n_{w_{i_j}^\pm}(\T_j)\textrm{ and }n_{w_{i_j}^\pm}(\D_j)\] change by $\pm 1$ after winding $\alpha_{i_j}$ once along $\eta_{i_j}^\pm$  and are unaffected by all other winding (note that  $i_j$ above does  not necessarily refer to  the same number in  $n_{w_{i_j}^\pm}(\T_j)$ as in $n_{w_{i_j}^\pm}(\D_j)$; recall that we are simply using the same notation for each basis, per Remark \ref{rmk:reuse}). 

Note also that \begin{equation}\label{eqn:EFzero}n_E(\D_j) = n_F(\D_j) = 0=2n_E(\P_{\gamma\alpha})-n_F(\P_{\gamma\alpha})\end{equation} for all $j=1,\dots,m$, where $E$ and $F$ are the points shown in Figure \ref{fig:uvuv2}.  In particular, the fact that  \[n_E(\D_j)=n_F(\D_j)=0\]  follows from the facts that $n_z(\D_j)=0$ and  the boundary of $\D_j$ is disjoint from $\gamma_{n+2g+1}$ and $\alpha_{n+2g+1}$.

\subsection{Top $\Sc$ structures} 
We will identify below the generators of the vector spaces \[\cfp(\ul\Sigma,\boldsymbol\beta,\boldsymbol\alpha,z)\textrm{ and }\cfp(\ul\Sigma,\boldsymbol\gamma,\boldsymbol\alpha,z)\] in the ``top" $\Sc$ structures  with respect to the genus $g$ distinguished surfaces \begin{align*}
-R_{\beta\alpha}=-R&\textrm{ in } Y_{\beta\alpha} = -Y_S, \textrm{ and}\\
-R_{\gamma\alpha}=-R&\textrm{ in }Y_{\gamma\alpha} = -Y.
\end{align*}
Recall from the previous subsection that the $(\boldsymbol\beta,\boldsymbol\alpha)$- and $(\boldsymbol\gamma,\boldsymbol\alpha)$-periodic domains $\P_{\beta\alpha}$ and $\P_{\gamma\alpha}$ represent the homology classes of $R_{\beta\alpha}$ and $R_{\gamma\alpha}$, respectively. Suppose $[\bx,i]$ is a generator of \[\cfp(\ul\Sigma,\boldsymbol\beta,\boldsymbol\alpha,z)\textrm{ or }\cfp(\ul\Sigma,\boldsymbol\gamma,\boldsymbol\alpha,z).\] We therefore have that
\begin{align}
\label{eqn:1}\langle c_1(\spc_z(\bx)),[R_{\beta\alpha}]\rangle&=\langle c_1(\spc_z(\bx)),[\P_{\beta\alpha}]\rangle=e(\P_{\beta\alpha})+ 2n_{\bx}(\P_{\beta\alpha}), \textrm{ or}\\
\label{eqn:2}\langle c_1(\spc_z(\bx)),[R_{\gamma\alpha}]\rangle&=\langle c_1(\spc_z(\bx)),[\P_{\gamma\alpha}]\rangle=e(\P_{\gamma\alpha}) + 2n_{\bx}(\P_{\gamma\alpha}),
\end{align}
respectively, where the rightmost equalities follow from \cite[Proposition~7.5]{osz14}, and where $e(D)$ refers to the \emph{Euler measure} of the domain $D$. For each $i=n{+}1,\dots,n{+}2g$, let $x_{\beta\alpha}^i$ and $x_{\gamma\alpha}^i$ be the unique intersection points 
\begin{align*}
x_{\beta\alpha}^i&=\beta_i\cap\alpha_{\sigma(i)}\cap \ul R_S,\\
x_{\gamma\alpha}^i&=\gamma_i\cap\alpha_{\sigma(i)}\cap \ul R_S,
\end{align*}
shown in Figure \ref{fig:TSarcs2}, and 
define
\begin{align*}
\bx_{\beta\alpha}&=\{x_{\beta\alpha}^{n+1},\dots, x_{\beta\alpha}^{n+2g}\}\in \operatorname{Sym}^{2g}(\ul R_S),\\
\bx_{\gamma\alpha}&=\{x_{\gamma\alpha}^{n+1},\dots, x_{\gamma\alpha}^{n+2g}\}\in \operatorname{Sym}^{2g}(\ul R_S).
\end{align*}
Recall that $\alpha_{n+2g+1}$ intersects  each of $\beta_{n+2g+1}$ and $\gamma_{n+2g+1}$ in two points \[u_{\beta\alpha},v_{\beta\alpha}\textrm{ and } u_{\gamma\alpha},v_{\gamma\alpha},\] respectively, as indicated in Figure \ref{fig:uvuv2}.
Our main result is the following.

\begin{lemma}
\label{lem:bot}
A generator $[\bx,i]\in\cfp(\ul\Sigma,\boldsymbol\gamma,\boldsymbol\alpha,z)$ satisfies \[\spc_z(\bx)\in\Sc(Y_{\gamma\alpha}|{-}R_{\gamma\alpha})\]  iff $\bx$ is of the form
\[\bx = \by\cup\{u_{\gamma\alpha}\}\cup \bx_{\gamma\alpha}\,\textrm{ or } \,\bx=\by\cup\{v_{\gamma\alpha}\}\cup \bx_{\gamma\alpha}, \]  where
\[
 \by\in (\gamma_1\times\dots\times\gamma_n)\cap (\alpha_1\times\dots\times\alpha_n)\subset \operatorname{Sym}^n(\Sigma).\] The analogous statement holds when replacing $\boldsymbol\gamma$ with $\boldsymbol\beta$ everywhere.
\end{lemma}

\begin{proof}
First, let us suppose that $\bx$ is of the form described in the lemma. Note that 
\begin{align*}
e(\P_{\gamma\alpha}) &= 2\chi(\ul R_\Sigma) + \chi(\ul R_S) = -6g-4n,\\
n_{\by}(\P_{\gamma\alpha})&=2n,\\
n_{\bx_{\gamma\alpha}}(\P_{\gamma\alpha})&=2g,\\
n_{u_{\gamma\alpha}}(\P_{\gamma\alpha}) = n_{v_{\gamma\alpha}}(\P_{\gamma\alpha}) &= 1 .
\end{align*}
The formula  \eqref{eqn:1} then implies that \[\langle c_1(\spc_z(\bx)),[-R_{\gamma\alpha}]\rangle=2g-2,\] as desired. For the converse, it is easy to see that if $\bx$ is not of this form, then $n_{\bx}(\P_{\gamma\alpha})>2n+2g+1$ (changing $\bx$ from a generator of this form to another generator  moves intersection points from the portion of $\ul\Sigma$ where $\P_{\gamma\alpha}$ has multiplicity $1$ to the portion where it has multiplicity $2$) which implies that \[\langle c_1(\spc_z(\bx)),[-R_{\gamma\alpha}]\rangle<2g-2.\] See \cite[Lemma~11]{lekili2} for what is virtually the same argument.
\end{proof}

\begin{remark} Note that the generators in the top $\Sc$ structures do not change after winding by the procedure described in the previous subsection.
\end{remark}

\subsection{Technical lemmas}
In this subsection, we use the set up of the previous two subsections to prove some technical lemmas. These lemmas are needed for  our admissibility results in the next subsection and for many of the results in Subsections \ref{ssec:proofiso} and \ref{ssec:proofmap}.

In preparation for the results below, we introduce some new terminology. This terminology makes reference to the pointed Heegaard triple diagram $(\ul\Sigma,\boldsymbol\gamma,\boldsymbol\beta,\boldsymbol\alpha,z)$ for $-W$. In particular, we say that $\spt\in\Sc(-W)$ \emph{satisfies} the adjunction inequality if \[|\langle c_1(\spt),[Z] \rangle| +Z\cdot Z \leq \max\{0,-\chi(Z)\}\] for every connected, embedded surface $Z\subset -W$ with nonnegative self-intersection. Define \[\SA(-W|{-}R)\subset \Sc(-W|{-}R)\] to be the set of $\spc\in\Sc(-W|{-}R)$ which either
\begin{enumerate}
\item satisfy the adjunction inequality, or
\item restrict to a $\Sc$ structure on each of $Y_{\gamma\beta}$, $Y_{\beta\alpha}$, $Y_{\gamma\alpha}$ represented by a generator of the corresponding Heegaard Floer complex.
\end{enumerate} Likewise, we say that $\spc\in\Sc(-Y)$ \emph{satisfies} the  adjunction inequality if \[|\langle c_1(\spt),[Z] \rangle| \leq \max\{0,-\chi(Z)\}\] for every connected, embedded surface $Z\subset -Y$, and we define  \[\SA(-Y|{-}R)\subset \Sc(-Y|{-}R)\] to be the set of $\spc\in\Sc(-Y|{-}R)$ which either
\begin{enumerate}
\item satisfy the adjunction inequality, or
\item are represented by a generator of the corresponding Heegaard Floer complex,
\end{enumerate} and similarly for $-Y_S$. 

\begin{remark}
We are abusing notation slightly above since $\SA(-W|{-}R)$ and $\SA(-Y|{-}R)$ technically depend on the Heegaard (triple) diagram; the  diagram we are using will generally be implicit, however, whenever this notation is used.
\end{remark}

\begin{remark}
\label{rmk:SAadj}
The Floer homology groups $\hfp(-Y_S|{-}R;\Gamma_\eta)$ and $\hfp(-Y|{-}R;\Gamma_\eta)$ and the map \[\hfp(-W|{-}R;\Gamma_\nu):\hfp(-Y_S|{-}R;\Gamma_\eta)\to \hfp(-Y|{-}R;\Gamma_\eta)\] can be nontrivial \emph{only} for $\Sc$ structures in $\SA(-Y_S|{-}R)$, $\SA(-Y|{-}R)$, and $\SA(-W|{-}R)$, by Theorems \ref{thm:adj3} and \ref{thm:adj4}. We can and will therefore restrict our attention to these $\Sc$ structures.
\end{remark}

We next define a quantity $L>0$ which depends only on the sutured Heegaard  diagram  \[(\Sigma,\{\gamma_1,\dots,\gamma_n\},\{\alpha_1,\dots,\alpha_n\})\] for $(-M,-\Gamma)$. We will refer to this quantity below and  in later subsections as well. The definition of $L$ makes reference to the basis  \eqref{eqn:basisD} \[\{\D_1,\dots,\D_p\}\] for the vector space  $\Pi_{\gamma\alpha}$ of rational periodic domains in this sutured diagram. For  $j=1,\dots, p$, choose a connected, embedded surface $Z_j\subset (-M,-\Gamma)$ with $[Z_j]=[\D_j]$ in $H_2(-M).$ Let \[B_j=\max\{0,-\chi(Z_j)\},\] and let \[B=\max\{B_1,\dots,B_p\}.\] Let  \[C_j=\max\{| e(\D_j) + 2n_{\by}(\D_j)|\},\] where the maximum is  over all  \begin{equation}\label{eqn:by}\by\in (\gamma_1\times\dots\times\gamma_n)\cap (\alpha_1\times\dots\times\alpha_n)\subset \operatorname{Sym}^n(\Sigma),\end{equation} and let \[C=\max\{C_1,\dots,C_p\}.\] We define \begin{equation}\label{eqn:L}L:=\max\{B,C,1\}.\end{equation}

We may now prove the following \emph{a priori} bound on $\langle c_1(\spc),[\D_j]\rangle$ for $\spc\in \SA(-Y|{-}R)$.

\begin{lemma}
\label{lem:SAs}
If $\spc\in \SA(-Y|{-}R)$ then $|\langle c_1(\spc),[\D_j]\rangle|\leq L$ for  $j=1,\dots,p$.
\end{lemma}

\begin{proof}
Suppose $\spc\in\SA(-Y|{-}R)$. If $\spc$ satisfies the adjunction inequality then we have that \[|\langle c_1(\spc),[\D_j]\rangle|\leq B\leq L. \]   If $\spc$ is represented by a generator $\bx$ of $\cfp(\ul\Sigma,\boldsymbol\gamma,\boldsymbol\alpha,z)$ then  \[\bx = \by\cup\{u_{\gamma\alpha}\}\cup \bx_{\gamma\alpha}\, \,\textrm{ or }\,\, \bx=\by\cup\{v_{\gamma\alpha}\}\cup \bx_{\gamma\alpha} \] for some $\by$ as in \eqref{eqn:by}, by Lemma \ref{lem:bot}. Therefore, \[|\langle c_1(\spc),[\D_j]\rangle| = |\langle c_1(\spc_z(\bx)),[\D_j]\rangle|=| e(\D_j) + 2n_{\bx}(\D_j)|=| e(\D_j) + 2n_{\by}(\D_j)|\leq C\leq L,\] where the second equality is from \cite[Proposition~7.5]{osz14}, and the third   from the fact that \[n_{u_{\gamma\alpha}}(\D_j)=n_{v_{\gamma\alpha}}(\D_j)=n_{\bx_{\gamma\alpha}}(\D_j)=0. \qedhere\] 
\end{proof}

A similar bound holds for the cobordism $-W$, per the following.

\begin{lemma}
\label{lem:SAt}
If $\spt\in \SA(-W|{-}R)$ then $|\langle c_1(\spt),[\D_j]\rangle|\leq L$ for  $j=1,\dots,p$.
\end{lemma}

\begin{proof}If  $\spt\in \SA(-W|{-}R)$ then  $\spt|_{-Y}\in \SA(-Y|{-}R)$, and 
 \[|\langle c_1(\spt),[\D_j]\rangle|=|\langle c_1(\spt|_{-Y}),[\D_j]\rangle|\leq L\] for  $j=1,\dots,p$ by Lemma \ref{lem:SAs}.\end{proof}

We further have the following \emph{a priori} bound on $\langle c_1(\spt),[\P_j]\rangle$ for $\spt\in \SA(-W|{-}R)$.

\begin{lemma}
\label{lem:SAtP}
If $\spt\in\SA(-W|{-}R)$ then $|\langle c_1(\spt),[\P_j]\rangle|=0$ for  $j=n{+1},\dots,n{+}2g{+}1$.
\end{lemma}

\begin{proof}
Note first that \[|\langle c_1(\spt),[\P_j]\rangle|=|\langle c_1(\spt|_{Y_{\gamma\beta}}),[\P_j]\rangle|,\] where  \[Y_{\gamma\beta}\cong\#^{2g+1}(S^1\times S^2).\] Moreover, the diagram $(\ul\Sigma,\boldsymbol\gamma,\boldsymbol\beta,z)$ is  an $n$-fold stabilization of the standard genus $2g{+}1$ Heegaard diagram for this manifold. In particular, the homology classes $[\P_{n+1}],\dots,[\P_{n+2g+1}]$ are represented by $\{\textrm{pt}\}\times S^2$s and the generators  of $\cfp(\ul\Sigma,\boldsymbol\gamma,\boldsymbol\beta,z)$ represent the torsion $\Sc$ structure. 
If $\spt$ satisfies the adjunction inequality then  $|\langle c_1(\spt),[\P_j]\rangle|=0$  since the $[\P_i]$  are represented by spheres in $Y_{\gamma\beta}$; if $\spt$ restricts to a $\Sc$ structure on $Y_{\gamma\beta}$ represented by a generator then $|\langle c_1(\spt|_{Y_{\gamma\beta}}),[\P_j]\rangle|=0$ since every such  $\Sc$ structure is torsion. 
\end{proof}

Finally, we  have the following bound on $\langle c_1(\spt),[\T_j]\rangle$ for $\spt\in \SA(-W|{-}R)$, where the number $Q$ below may depend on the closure.

\begin{lemma}
\label{lem:SAtT}
There exists  some $Q>0$ such that \[|\langle c_1(\spt),[\T_j]\rangle| \leq Q\] for  $j=p{+}1,\dots,m$ and any $\spt\in\SA(-W|{-}R)$.
\end{lemma}

\begin{proof}
For  $j=p+1,\dots,m$, choose  a connected, embedded surface $Z_j\subset -W$ with $[Z_j]=[\T_j]$ in $H_2(-W).$ Let
 \[G_j=\max\{0,-\chi(Z_j)\},\] and let \[G=\max\{G_{p+1},\dots,G_m\}.\] For   $j=p{+}1,\dots,m$, let us write $\T_j$ as a sum \[\T_j = \T_j^{\gamma\beta} + \T_j^{\beta\alpha} +\T_j^{\gamma\alpha}\] of rational $(\boldsymbol\gamma,\boldsymbol\beta)$-, $(\boldsymbol\beta,\boldsymbol\alpha)$-, and $(\boldsymbol\gamma,\boldsymbol\alpha)$-periodic domains (recall that the $\T_j$ are sums of doubly-periodic domains, by definition).  Let 
\begin{align*}
H_j^{\gamma\beta} &= \max\{|\langle c_1(\spc_z(\bx)),[\T_j^{\gamma\beta}]\rangle|\mid \bx\in \T_\gamma\cap \T_\beta\},\\
H_j^{\beta\alpha} &= \max\{|\langle c_1(\spc_z(\bx)),[\T_j^{\beta\alpha}]\rangle|\mid \bx\in \T_\beta\cap \T_\alpha\},\\
H_j^{\gamma\alpha} &= \max\{|\langle c_1(\spc_z(\bx)),[\T_j^{\gamma\alpha}]\rangle|\mid \bx\in \T_\gamma\cap \T_\alpha\}.
\end{align*}
Let \[H_j = H_j^{\gamma\beta}+H_j^{\beta\alpha}+H_j^{\gamma\alpha},\] and let \[H = \max\{H_{p+1},\dots,H_m\}.\] Define \[Q:=\max\{G,H,1\}.\] Now suppose $\spt\in\SA(-W|{-}R)$. Since $\T_j$ is a sum of rational doubly-periodic domains, we have that $[\T_j]\cdot[\T_j]=0$. If $\spt$ satisfies the adjunction inequality then  \[|\langle c_1(\spt),[\T_j]\rangle|\leq G\leq Q. \] If $\spt$ restricts to a $\Sc$ structure on each of $Y_{\gamma\beta}$, $Y_{\beta\alpha}$, $Y_{\gamma\alpha}$ represented by a generator, then we have that \[|\langle c_1(\spt),[\T_j]\rangle|\leq H\leq Q, \] by construction. This proves the claim.
\end{proof}

We now prove Lemmas \ref{lem:tech1} and \ref{lem:tech2} and the accompanying Lemmas \ref{lem:tech13} and \ref{lem:tech23}, which are the main results of this subsection. Roughly, these lemmas state that by making the genus $g$ of the distinguished surface  large enough and winding sufficiently, we can ensure that  any periodic domain of a certain  form will have either a very negative Maslov index contribution (as measured by  evaluations of Chern classes of $\Sc$ structures on its homology class) or  a very negative multiplicity somewhere. These lemmas will be essential for proving admissibility results in the next subsection, as well as for the results which go into the proofs of Theorems \ref{thm:iso} and \ref{thm:map} in Subsections \ref{ssec:proofiso} and \ref{ssec:proofmap}. The latter are the key theorems  used to prove Theorem \ref{thm:mainob2} in Subsection \ref{ssec:facthm}. Recall below that $p$ is the size of the basis \eqref{eqn:basisD}.

\begin{lemma}
\label{lem:tech1}
Fix some $N>0$. Suppose \[g\geq 2pL+2.\] After sufficient winding, the following is true:
for any rational linear combination \[P=a_{\gamma\alpha} \P_{\gamma\alpha} + \sum_{i=1}^p b_i \D_i +\sum_{i=p+1}^m c_i\T_i + \sum_{i=n+1}^{n+2g+1} d_i\P_i \] where some $|c_j|\geq 1$, and any $\spt\in \SA(-W|{-}R)$, we have that either:
\begin{itemize}
\item $\langle c_1(\spt),[P]\rangle \leq -N$, or
\item $\ul n_{w_{i_j}}(P)\leq -N$ for some $j\in\{1,\dots,m\}$.
\end{itemize}
\end{lemma}

\begin{proof}Fix some $N>0$ and fix some $g\geq 2pL+2.$ Let $Q$ be as in Lemma \ref{lem:SAtT}.
Let \[K= \max\{|n_{w_{i}^\pm}(\T_j)|\mid i=1,\dots,n+2g \text{ and } j=p{+}1,\dots,m\}.\] Let us  wind  $\alpha_i$ along each of the curves $\eta_i^+$ and $\eta_i^-$ a total of  \begin{equation}\label{eqn:J}J\geq 2Km+2N+mQ\end{equation} times in the directions given by their orientations, for each $i=n{+}1,\dots,n{+}2g$. Now suppose $P$ is as in the hypothesis of the lemma. Suppose $r$ and $s$ satisfy
\[|b_r|= \max\{|b_i|\mid i=1,\dots,p\}\textrm{ and }
|c_{s}|= \max\{|c_i|\mid i=p{+}1,\dots,m\}.\] Before winding, we have that 
\begin{align}
\label{eqn:wir}\ul n_{w_{i_r}}(P)&\leq K(m-p)|c_s| -|b_r| + 2a_{\gamma\alpha}\leq Km|c_s| -|b_r| + 2a_{\gamma\alpha},\\
\ul n_{w_{i_s}}(P)&\leq K(m-p)|c_s| + 2a_{\gamma\alpha}\leq Km|c_s| + 2a_{\gamma\alpha},
\end{align}
as follows from \eqref{eqn:pga}, \eqref{eqn:pi}, \eqref{eqn:ndj}, and \eqref{eqn:ndk}. (Recall that the  $i_j$ subscripts refer to curves in the boundaries of $\D_j$ and $\T_j $ as in \eqref{eqn:basis2} and \eqref{eqn:basisTstd}.) After winding, $\ul n_{w_{i_r}}(P)$ is unchanged, but  \begin{equation}\label{eqn:wis}
\ul n_{w_{i_s}}(P)\leq Km|c_s| + 2a_{\gamma\alpha} -J|c_s|.\end{equation} Suppose    $\ul n_{w_{i_r}}(P)$ and $\ul n_{w_{i_s}}(P)$ are both greater than $-N$ after winding. Then 
\begin{align}
\label{eqn:br}|b_r|&<2a_{\gamma\alpha}+Km|c_s|+N\leq 2a_{\gamma\alpha}+Km|c_s|+N|c_s|,\\
\label{eqn:2a}2a_{\gamma\alpha}&>J|c_s|-Km|c_s|-N\geq (Km+2N+mQ)|c_s|-N\geq(Km+N+mQ)|c_s|. \end{align} The top inequalities follow from \eqref{eqn:wir} and the fact that $|c_s|\geq 1$; the bottom inequalities follow from \eqref{eqn:wis}, \eqref{eqn:J}, and $|c_s|\geq 1$. 
To prove the lemma, it suffices to show that \[\langle c_1(\spt),[P]\rangle \leq -N.\] Note that  \begin{align*}
\langle c_1(\spt),[P]\rangle &\leq a_{\gamma\alpha}(2-2g)+pL|b_r| + mQ|c_s|\\
&\leq 2a_{\gamma\alpha}(-1-2pL)+pL(2a_{\gamma\alpha}+Km|c_s|+N|c_s|) + mQ|c_s|\\
&=-2a_{\gamma\alpha} -pL2a_{\gamma\alpha} + pL(Km+N+mQ)|c_s| + mQ|c_s|-pLmQ|c_s|\\
&\leq -2a_{\gamma\alpha}-pL(Km+N+mQ)|c_s|+pL(Km+N+mQ)|c_s| + mQ|c_s|-pLmQ|c_s|\\
&=-2a_{\gamma\alpha}+mQ|c_s|-pLmQ|c_s|\\
&<-(Km+N+mQ)|c_s|+mQ|c_s|-pLmQ|c_s|\\
&=(-Km-N)|c_s|-pLmQ|c_s|\\
&\leq -N,
\end{align*}
as desired. The first line follows from  Lemmas \ref{lem:SAtP} and \ref{lem:SAtT}. The second line follows from \eqref{eqn:br}, the fact that \[g\geq2pL+2 \iff 1-g \leq -1-2pL,\] and the fact that $a_{\gamma\alpha}>0$. The latter fact follows from the inequality \eqref{eqn:2a} since $K,m,N,Q$ are nonnegative by definition. The fourth and sixth lines in the inequalities above also follow from \eqref{eqn:2a}.
\end{proof}

\begin{lemma}
\label{lem:tech2}
Fix some $N> 0$. Suppose \[g\geq pLN+pL+N+1.\]
The following is true: for any rational linear combination \[P=a_{\gamma\alpha} \P_{\gamma\alpha} + \sum_{i=1}^p b_i \D_i + \sum_{i=n+1}^{n+2g+1} d_i\P_i \] where $a_{\gamma\alpha}\geq 1$, and any $\spt\in\SA(-W|{-}R)$,  we have that either:
\begin{itemize}
\item $\langle c_1(\spt),[P]\rangle \leq -N$, or
\item $\ul n_{w_{i_j}}(P)\leq -N$ for some $j\in\{1,\dots,p\}$.
\end{itemize}\end{lemma}

\begin{proof} Suppose $P$ is as in the hypothesis of the lemma.
Suppose \[|b_r|= \max\{|b_i|\mid i=1,\dots,p\}.\] Then
\[\ul n_{w_{i_r}}(P)\leq -|b_r| +2a_{\gamma\alpha}.\] Suppose $\ul n_{w_{i_r}}(P)$ is greater than $-N$. Then \begin{equation}\label{eqn:br2}|b_r|<2a_{\gamma\alpha}+N.\end{equation} To prove the lemma, it suffices to show that \[\langle c_1(\spt),[P]\rangle \leq -N.\]  Note that \begin{align*}
\langle c_1(\spt),[P]\rangle &\leq a_{\gamma\alpha}(2-2g)+pL|b_r|\\
&\leq 2a_{\gamma\alpha}(-pLN-pL-N) + pL|b_r|\\
&\leq 2a_{\gamma\alpha}(-pLN-pL-N) + pL(2a_{\gamma\alpha}+N)\\
&=-pLN(2a_{\gamma\alpha}-1)-N(2a_{\gamma\alpha})\\
&\leq -N,
\end{align*}
as desired. The second line follows from the fact that  \[g\geq pLN+pL+N+1 \iff 1-g \leq -pLN-pL-N;\] the third from \eqref{eqn:br2}; the last from the fact that $a_{\gamma\alpha}\geq 1$.
\end{proof}

Lemmas \ref{lem:tech13} and \ref{lem:tech23} below are analogues of those above, but for the basis \eqref{eqn:basisDextended}  rather than \eqref{eqn:basisTextended}. Their proofs are  identical to those of Lemmas \ref{lem:tech1} and \ref{lem:tech2}, so we omit them (the  properties of the domains $\mathbb{T}_j$ used in these proofs---namely, Lemma \ref{lem:SAtT} and the behaviors of the coefficients $n_{w_{i}^\pm}(\T_j)$ under winding---have direct analogues which hold after replacing the $\mathbb{T}_j$ with the $\mathbb{D}_j$, for $j=p+1,\dots,m$).

\begin{lemma}
\label{lem:tech13}
Fix some $N>0$. Suppose \[g\geq 2pL+2.\] After sufficient winding, the following is true:
for any rational linear combination \[P=a_{\gamma\alpha} \P_{\gamma\alpha} + \sum_{i=1}^p b_i \D_i +\sum_{i=p+1}^m c_{i}\D_{i}  \] where some $|c_j|\geq 1$, and any $\spc\in \SA(-Y|{-}R)$, we have that either:
\begin{itemize}
\item $\langle c_1(\spc),[P]\rangle \leq -N$, or
\item $\ul n_{w_{i_j}}(P)\leq -N$ for some $j\in\{1,\dots,m\}$.
\end{itemize}

\end{lemma}

\begin{lemma}
\label{lem:tech23}
Fix some $N> 0$. Suppose \[g\geq pLN+pL+N+1.\]
The following is true: for any rational linear combination \[P=a_{\gamma\alpha} \P_{\gamma\alpha} + \sum_{i=1}^p b_i \D_i  \] where $a_{\gamma\alpha}\geq 1$, and any $\spc\in\SA(-Y|{-}R)$,  we have that either:
\begin{itemize}
\item $\langle c_1(\spc),[P]\rangle \leq -N$, or
\item $\ul n_{w_{i_j}}(P)\leq -N$ for some $j\in\{1,\dots,p\}$.\qed
\end{itemize}
\end{lemma}

\begin{remark}
One can just as easily prove analogues of Lemmas \ref{lem:tech13} and \ref{lem:tech23} with respect to a basis of periodic domains for the diagram $(\ul\Sigma,\boldsymbol\beta,\boldsymbol\alpha,z)$ for $-Y_S$.
\end{remark}

\subsection{Admissibility} In order to use the diagrams  $(\ul\Sigma,\boldsymbol\beta,\boldsymbol\alpha,z)$,  $(\ul\Sigma,\boldsymbol\gamma,\boldsymbol\alpha,z)$, and $(\ul\Sigma,\boldsymbol\gamma,\boldsymbol\beta,\boldsymbol\alpha,z)$  to define  chain complexes and a chain map which compute the homology groups 
 $\hfp(-Y_S|{-}R;\Gamma_\eta)$ and $\hfp(-Y|{-}R;\Gamma_\eta)$ and the map \[\hfp(-W|{-}R;\Gamma_\nu):\hfp(-Y_S|{-}R;\Gamma_\eta)\to \hfp(-Y|{-}R;\Gamma_\eta),\]  these diagrams must satisfy certain admissibility conditions, as described in Subsections \ref{ssec:hflocal} and \ref{ssec:cob}. We prove below that we can achieve the required admissibility for $g$ sufficiently large and after sufficient winding.

\begin{proposition}
\label{prop:admissible} For sufficiently large $g$ and sufficient winding, the diagram $(\ul\Sigma,\boldsymbol\gamma,\boldsymbol\beta,\boldsymbol\alpha,z)$ is weakly $\spt$-admissible for all $\spt\in\SA(-W|{-}R)$.
\end{proposition}

\begin{proof}

Fix some $N>0$. Fix some \[g\geq \max\{2pL+2, pLN+pL+N+1\}\] for $L$ as in \eqref{eqn:L}.  Wind sufficiently so that the conclusion of Lemma \ref{lem:tech1} holds for this $N$. With this  $g$  and this winding, we may  prove the admissibility  claimed in the proposition. Suppose  $\spt\in\SA(-W|{-}R)$. Suppose $P$ is a nontrivial  triply-periodic domain in $(\ul\Sigma,\boldsymbol\gamma,\boldsymbol\beta,\boldsymbol\alpha,z)$ which is a sum of doubly-periodic domains. Then $[P]\cdot [P]=0$. Suppose \[\langle c_1(\spt),[P]\rangle=0.\] We must show that $P$ has a negative multiplicity.  If the coefficient of $\T_j$ in $P$ is nonzero for some $j\in\{p{+}1,\dots,m\}$ then $P$ has a negative multiplicity by Lemma \ref{lem:tech1}. Otherwise, if the coefficient of $\T_j$ in $P$ is zero for  $j=p{+}1,\dots,m$ and the coefficient of $\P_{\gamma\alpha}$ in $P$ is nonzero then $P$ has a negative multiplicity by Lemma \ref{lem:tech2} (either the coefficient of $\P_{\gamma\alpha}$ is negative, in which case $P$ has a negative multiplicity, or the coefficient is positive, in which case Lemma \ref{lem:tech2} applies). We may  thus assume that the coefficients in $P$ of $\P_{\gamma\alpha}$ and $\T_j$ are zero for  $j=p{+}1,\dots,m$. Therefore, $P$ is  a linear combination \[P=\sum_{i=1}^pb_i\D_i+\sum_{i=n+1}^{n+2g+1}d_{i}\P_{i}.\] Since $P$ is nontrivial, either some $b_j$ is nonzero or some $d_i$ is nonzero. In the first case,  \[n_{w_{i_j}^{\pm}}(P)= \pm b_j\] by \eqref{eqn:pi}, \eqref{eqn:ndj}, \eqref{eqn:ndk}. In the second case, $P$ has  multiplicities $\pm d_i$ in some thin regions. In either case, $P$ has a negative multiplicity, completing the proof.
\end{proof}

The result below is proven is exactly the same way as Proposition \ref{prop:admissible}, except that we use Lemmas \ref{lem:tech13} and \ref{lem:tech23} instead of Lemmas \ref{lem:tech1} and \ref{lem:tech2}; we therefore omit its proof. 

\begin{proposition}
\label{prop:admissible3} For sufficiently large $g$ and sufficient winding, the diagram $(\ul\Sigma,\boldsymbol\gamma,\boldsymbol\alpha,z)$ is weakly $\spc$-admissible for  all $\spc\in\SA(-Y|{-}R)$.\qed\end{proposition} 

\begin{remark}
\label{rmk:admissibleba}
One can just as easily prove the analogous admissibility result for the diagram $(\ul\Sigma,\boldsymbol\beta,\boldsymbol\alpha,z)$ and $\spc\in\SA(-Y_S|{-}R)$.
\end{remark}

\subsection{Theorems  \ref{thm:iso} and \ref{thm:map}  imply Theorem \ref{thm:mainob2}}
\label{ssec:facthm}
In this subsection, we state  Theorems \ref{thm:iso} and \ref{thm:map}, and explain how they imply Theorem \ref{thm:mainob2}, which then implies our main theorem, Theorem \ref{thm:mainob}, as explained at the beginning of this section. 

In preparation for the statements of these theorems, let  us define \[\Theta= \{\Theta^1,\dots,\Theta^{n{+}2g{+}1}\}\in \T_\gamma\cap \T_\beta,\]  where each $\Theta^i$ is one of the two  intersection points between $\gamma_i$ and $\beta_i$, as shown in 
\begin{itemize}
\item Figure \ref{fig:HH2} for $i=1,\dots,n$,
\item Figure \ref{fig:TSarcs2} for $i=n{+}1,\dots,n{+}2g$, and 
\item Figure \ref{fig:uvuv2} for $i=n{+}2g{+}1$,
\end{itemize}
 so that $[\Theta,0]$ is the unique generator in the top Maslov grading of \[\cfp(\ul\Sigma,\boldsymbol\gamma,\boldsymbol\beta,z)\] among generators killed by $U$. For each $\spt\in\Sc(-W)$, the map 
\[\hfp(W_{\gamma\beta\alpha},\spt;\Gamma_\nu): \hfp(Y_{\beta\alpha},\spt|_{Y_{\beta\alpha}};\Gamma_\eta)\to \hfp(Y_{\gamma\alpha},\spt|_{Y_{\gamma\alpha}};\Gamma_\eta)\] is then defined as in Subsection \ref{ssec:cob}, in terms of  a chain map \begin{equation*}\label{eqn:f+2}f^+_{\gamma\beta\alpha,\spt;\Gamma_\nu}:\cfp(\ul\Sigma,\boldsymbol\beta,\boldsymbol\alpha,z,\spt|_{Y_{\beta\alpha}};\Gamma_\eta)\to\cfp(\ul\Sigma,\boldsymbol\gamma,\boldsymbol\alpha,z,\spt|_{Y_{\gamma\alpha}};\Gamma_\eta)\end{equation*} defined on a generator $[\bx,i]$ by \[ f^+_{\gamma\beta\alpha,\spt;\Gamma_\nu}([\bx,i]) =\sum_{\substack{\phi\in\pi_2(\Theta,\bx,\by)\\\spc_z(\phi)=\spt\\\mu(\phi)=0}} \#\mathcal{M}(\phi)\cdot [\by,i-n_z(\phi)]\cdot t^{\partial_\alpha(\phi)\cdot \eta}, \]   
assuming the diagram $(\ul\Sigma,\boldsymbol\gamma,\boldsymbol\beta,\alpha,z)$ is weakly $\spt$-admissible.

\begin{remark}We hereafter assume that $g$ is sufficiently large and  we have wound sufficiently so that 
\begin{itemize}
\item $(\ul\Sigma,\boldsymbol\gamma,\boldsymbol\beta,\boldsymbol\alpha,z)$ is weakly $\spt$-admissible for  $\spt\in\SA(-W|{-}R)$, and
\item $(\ul\Sigma,\boldsymbol\beta,\boldsymbol\alpha,z)$ and  $(\ul\Sigma,\boldsymbol\gamma,\boldsymbol\alpha,z)$ are weakly $\spc$-admissible for  $\spc$ in $\SA(-Y_S|{-}R)$ and $\SA(-Y|{-}R)$,
\end{itemize} per Propositions \ref{prop:admissible} and \ref{prop:admissible3} and Remark \ref{rmk:admissibleba}.
\end{remark}

Let us denote by \[\cfp(\ul\Sigma,\boldsymbol\beta,\boldsymbol\alpha,z|{-}R_{\beta\alpha};\Gamma_\eta)\textrm{ and }\cfp(\ul\Sigma,\boldsymbol\gamma,\boldsymbol\alpha,z|{-}R_{\gamma\alpha};\Gamma_\eta)\] the  direct sums of  the chain complexes \[\cfp(\ul\Sigma,\boldsymbol\beta,\boldsymbol\alpha,z,\spc;\Gamma_\eta)\textrm{ and }\cfp(\ul\Sigma,\boldsymbol\gamma,\boldsymbol\alpha,z,\spc;\Gamma_\eta)\] over $\Sc$ structures $\spc$ in $\SA(-Y_S|{-}R)$ and $\SA(-Y|{-}R)$, respectively.
Let us define \[f^+_{\gamma\beta\alpha;\Gamma_\nu}:\cfp(\ul\Sigma,\boldsymbol\beta,\boldsymbol\alpha,z|{-}R_{\beta\alpha};\Gamma_\eta)\to\cfp(\ul\Sigma,\boldsymbol\gamma,\boldsymbol\alpha,z|{-}R_{\gamma\alpha};\Gamma_\eta)\] to be the sum of the maps $f^+_{\gamma\beta\alpha,\spt;\Gamma_\nu}$ over $\spt\in\SA(-W|{-}R)$.  This  induces the map \begin{equation}\label{eqn:Gmap}B:=\hfp({-}W|{-}R;\Gamma_\nu): \hfp({-}Y_S|{-}R;\Gamma_\eta)\to \hfp({-}Y|{-}R;\Gamma_\eta)\end{equation} on homology, by Theorems \ref{thm:adj3} and  \ref{thm:adj4} (see Remark \ref{rmk:SAadj}).

For each $i=1,\dots,n$, let $c_{\beta\alpha}^i$ and $c_{\gamma\alpha}^i$ be the unique   intersection points 
\begin{align*}
c_{\beta\alpha}^i&=\beta_i\cap\alpha_i\cap H_i,\\
c_{\gamma\alpha}^i&=\gamma_i\cap\alpha_i\cap H_i
\end{align*}
shown in Figure \ref{fig:HH2}, 
and define
\begin{align*}
\bcc_{\beta\alpha}&=\{c_{\beta\alpha}^{1},\dots, c_{\beta\alpha}^{n}\}\in \sfc(\Sigma,\{\beta_1,\dots,\beta_n\},\{\alpha_1,\dots,\alpha_n\}),\\
\bcc_{\gamma\alpha}&=\{c_{\gamma\alpha}^{1},\dots, c_{\gamma\alpha}^{n}\}\in \sfc(\Sigma,\{\gamma_1,\dots,\gamma_n\},\{\alpha_1,\dots,\alpha_n\}).
\end{align*}
\begin{remark}
\label{rmk:reps}These  $\bcc_{\beta\alpha}$ and $\bcc_{\gamma\alpha}$ are representatives of the contact elements $c_{\HF}(\ob_S)$ and $c_{\HF}(\ob)$ associated to the partial open books $\ob_S=(S,\emptyset,\emptyset,\emptyset)$ and $\ob=(S,P,h,\{c_1,\dots,c_n\})$, by the discussion in Subsection \ref{ssec:partialob}.
\end{remark}

We will prove the two theorems below in the next two subsections.

\begin{theorem}
\label{thm:iso}
For sufficiently large $g$  and sufficient winding, there are quasi-isomorphisms 
\begin{align}
\label{eqn:qi1}f_{\beta\alpha}&:\sfc(\Sigma,\{\beta_1,\dots,\beta_n\},\{\alpha_1,\dots,\alpha_n\})\otimes\Lambda\to\cfp(\ul\Sigma,\boldsymbol\beta,\boldsymbol\alpha,z|{-}R_{\beta\alpha};\Gamma_{\eta})\\
\label{eqn:qi2}
f_{\gamma\alpha}&:\sfc(\Sigma,\{\gamma_1,\dots,\gamma_n\},\{\alpha_1,\dots,\alpha_n\})\otimes\Lambda\to\cfp(\ul\Sigma,\boldsymbol\gamma,\boldsymbol\alpha,z|{-}R_{\gamma\alpha};\Gamma_{\eta})\end{align} sending 
$\bcc_{\beta\alpha}\otimes 1$ to $[\bcc_{\beta\alpha}\cup \{u_{\beta\alpha}\}\cup \bx_{\beta\alpha},0]$ and $\bcc_{\gamma\alpha}\otimes 1$ to $ [\bcc_{\gamma\alpha}\cup \{u_{\gamma\alpha}\}\cup \bx_{\gamma\alpha},0]$, respectively.
\end{theorem}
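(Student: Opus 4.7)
The plan is to define $f_\beta$ at the chain level by
\[
f_\beta(\by \otimes 1) = [\by \cup \{u_\beta\} \cup \bx_\beta, 0]
\]
and to extend $\Lambda$-linearly; the map $f_\gamma$ is defined analogously, replacing $\beta$ with $\gamma$ throughout. Lemma \ref{lem:bot} places the image in the bottom $\Sc$ complex, and since $\beta$ and $\gamma$ play symmetric roles in the construction of $\ul\Sigma$, the argument for $f_\beta$ applies to $f_\gamma$ verbatim. I will treat $f_\beta$.

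The chain-map property reduces to the following disk classification: for sufficiently large $g$ and sufficient winding of each $\alpha_i$ along $\eta_i^+$ and $\eta_i^-$, every $\mu(\phi) = 1$ Whitney disk $\phi$ emanating from a generator of the form $\by \cup \{u_\beta\} \cup \bx_\beta$ and landing in the bottom $\Sc$ subcomplex must satisfy $n_z(\phi) = 0$, must fix the $u_\beta$ and $\bx_\beta$ coordinates, and must have domain supported in $\Sigma \subset \ul\Sigma$. Such $\phi$ are then in bijection with sutured Whitney disks in $(\Sigma,\{\beta_1,\dots,\beta_n\},\{\alpha_1,\dots,\alpha_n\})$ from $\by$ to the corresponding $\by'$. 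Three ingredients combine to prove this. First, the Euler-measure computation underlying Lemma \ref{lem:bot} constrains the multiplicities of $\phi$ throughout $\ul R_S$ and $T$. Second, the winding argument of Proposition \ref{prop:winding} forces any disk that moves an $x_\beta^i$ off its canonical position to accrue, in regions where $P_\beta$ has multiplicity $2$, a positive periodic contribution incompatible with the bottom $\Sc$ condition. Third, a neck-stretching argument along the tubes $T_\alpha$ and $T_\beta$ of Figure \ref{fig:schematichd}, whose lengths may be taken arbitrarily large by enlarging $g(T)$, splits any candidate disk into a disk supported in $\Sigma$ plus trivial strips elsewhere. Finally, since $\partial_\alpha(\phi) \subset \Sigma$ is disjoint from $\eta \subset \ul R_S$, the twisted weight $t^{\partial_\alpha(\phi) \cdot \eta}$ is $1$, so $f_\beta$ commutes exactly with the differentials.

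To upgrade $f_\beta$ to a quasi-isomorphism, I will exhibit an acyclic complement to its image. Filter $\cfp(\beta,\alpha|R_\beta;\Gamma_\eta)$ first by whether the $\alpha_{n+2g+1}\cap\beta_{n+2g+1}$ coordinate is $u_\beta$ or $v_\beta$, and then by $U$-power. The small bigon between $v_\beta$ and $u_\beta$ along $\alpha_{n+2g+1}$ and $\beta_{n+2g+1}$ that lies on the side of $\alpha_{n+2g+1}$ opposite to $z$ (visible in Figure \ref{fig:uvuv2}) provides a canonical $n_z = 0$ differential that pairs the $v_\beta$ generators with their $u_\beta$ counterparts. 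Analogous small bigons at each of the $H_i$ and near the intersections of Figure \ref{fig:TSarcs2}, together with bigons through the basepoint $z$ that raise the $U$-power, contract the $U$-positive part of the complex via the standard extremal-$\Sc$ argument. The resulting spectral sequence collapses at $E_2$ onto the image of $f_\beta$, which is thereby identified on homology with $\sfh(\Sigma,\{\beta_1,\dots,\beta_n\},\{\alpha_1,\dots,\alpha_n\}) \otimes \Lambda$.

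The main obstacle will be the disk classification itself: precisely controlling holomorphic disks under the combined effects of winding and neck-stretching, while keeping track of the twisted coefficients and the constraint that both endpoints of every disk lie in the bottom $\Sc$ subcomplex. The technology is essentially that of \cite{osz1, osz5, lekili2}, but the admissibility, energy, and multiplicity bookkeeping must be redone in the bottom-$\Sc$, twisted-coefficient setting, and the cancellation of the $v_\beta$-sector requires showing that the small bigons identified above are the \emph{only} disks responsible for pairing the two halves (again relying on the winding and on $g(T)$ being large).
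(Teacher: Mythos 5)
Your high-level architecture matches the paper: same chain-level definition of $f_\beta, f_\gamma$, a disk classification to prove the chain-map property, and then a filtration by the $u/v$ coordinate and $U$-power whose small-bigon cancellations realize the acyclic-complement picture (the paper packages this as the grading $\mathcal{F}=2n_A - n_B$ and Lemmas \ref{lem:filt}--\ref{lem:qi}).  However, there is a genuine gap in the key technical step.  The paper's disk classification (Lemma \ref{lem:stddisk}) is purely combinatorial: one writes the domain of a candidate disk as an integer linear combination against a preferred basis of periodic domains, and then uses (i) the basepoint multiplicities $\ul n_{w_i}$, which become very negative after winding, and (ii) a Maslov index bound $\mu(\phi)\leq M-2g<0$ once $2g>M$, to rule out every domain not supported in $\Sigma$.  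You replace this with a ``neck-stretching argument along the tubes $T_\alpha$ and $T_\beta$... whose lengths may be taken arbitrarily large by enlarging $g(T)$.''  That is not what large $g$ does: the genus of the auxiliary surface and the length of the connecting tubes are independent, and a Gromov-style degeneration would reintroduce precisely the kind of holomorphic-curve analysis (from Lekili's work) that this construction is designed to avoid.  In the paper, ``sufficiently large $g$'' enters only through the Maslov computation $\mu(\psi)=\mu(-\ul\Sigma)+\mu(P_\gamma)=-2g$; you will need to recover this or some substitute for your approach to close.

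Two further inaccuracies are worth flagging.  The roles of $\beta$ and $\gamma$ are not symmetric: the $\beta$-diagram is a stabilization of the standard diagram for the product sutured manifold $-H(S)$ (rank-one sutured Floer homology, and the cycle condition is essentially already in \cite[Section 3]{osz1}), whereas the $\gamma$-diagram encodes $(-M,-\Gamma)$, which is generic.  The paper therefore treats $f_\gamma$ as the hard case and observes the argument specializes to $f_\beta$, not the other way around.  Finally, the hardest part of the disk classification --- ruling out $\mu=1$ disks that swap $u \leftrightarrow v$, which is exactly where the bound $2g>M$ is invoked --- is where your argument is vaguest.  Until that case is handled by a concrete mechanism, the claim that the small bigons are the \emph{only} disks pairing the two halves remains unproven, and it is precisely the content of the theorem.
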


\begin{remark}
Theorem \ref{thm:iso} also holds without tensoring with the Novikov ring on the left side and without twisted coefficients on the right. 
\end{remark}

\begin{remark}
The fact that $\bcc_{\beta\alpha}$ and $\bcc_{\gamma\alpha}$ are cycles implies that \[ [\bcc_{\beta\alpha}\cup \{u_{\beta\alpha}\}\cup \bx_{\beta\alpha},0]\textrm{ and } [\bcc_{\gamma\alpha}\cup \{u_{\gamma\alpha}\}\cup \bx_{\gamma\alpha},0]\] are cycles for sufficiently large $g$ and sufficient winding,  by Theorem \ref{thm:iso}.
\end{remark}

\begin{theorem}
\label{thm:map}
For sufficiently large $g$ and sufficient winding, the  map  \[B=(f^{+}_{\gamma\beta\alpha;\Gamma_\nu})_*:H_*(\cfp(\ul\Sigma,\boldsymbol\beta,\boldsymbol\alpha,z|{-}R_{\beta\alpha};\Gamma_\eta))\to H_*(\cfp(\ul\Sigma,\boldsymbol\gamma,\boldsymbol\alpha,z|{-}R_{\gamma\alpha};\Gamma_\eta))\] sends \[[[\bcc_{\beta\alpha}\cup\{u_{\beta\alpha}\}\cup \bx_{\beta\alpha},0]]\,\textrm{ to }\,[[\bcc_{\gamma\alpha}\cup\{u_{\gamma\alpha}\}\cup \bx_{\gamma\alpha},0]].\]
\end{theorem}

We explain below how Theorems \ref{thm:iso} and \ref{thm:map} imply Theorem \ref{thm:mainob2}.

\begin{proof}[Proof of Theorem \ref{thm:mainob2}]
Suppose $g$ is sufficiently large and  we have wound sufficiently to guarantee the conclusions of Theorems  \ref{thm:iso} and \ref{thm:map}. Fix   a contact structure $\bar\xi_S$ on $Y_S$ such that  $(Y_S,R,\bar\xi_S,\eta)$ is a contact closure of $(H(S),\xi_S)$. Let $\bar\xi$ be the contact structure on $Y$ obtained from $\bar\xi_S$ via contact $(+1)$-surgery on a Legendrian realization of the link \[\mathbb{L}=s_1\cup\dots \cup s_n\subset Y_S.\] As  discussed in Subsection \ref{ssec:reform}, $(Y,R,\bar\xi,\eta)$ is a contact closure of $(M,\Gamma,\xi)$.
Note  that \[\sfc(\Sigma,\{\beta_1,\dots,\beta_n\},\{\alpha_1,\dots,\alpha_n\})\otimes\Lambda\] is generated by $\bcc_{\beta\alpha}\otimes 1$. It therefore follows from Theorem \ref{thm:iso}  that $[\bcc_{\beta\alpha}\cup \{u_{\beta\alpha}\}\cup \bx_{\beta\alpha},0]$ is a cycle which represents a generator  \[ \mathbf{1}\in\hfp(Y_{\beta\alpha}|{-}R_{\beta\alpha};\Gamma_\eta)=  \hfp(-Y_S|{-}R;\Gamma_\eta)\cong \Lambda.\]  By Theorem \ref{thm:map}, the map \[B=\hfp({-}W|{-}R;\Gamma_\nu): \hfp({-}Y_S|{-}R;\Gamma_\eta)\to \hfp({-}Y|{-}R;\Gamma_\eta)\] satisfies \begin{equation}\label{eqn:B1}B(\mathbf{1})=[[\bcc_{\gamma\alpha}\cup \{u_{\gamma\alpha}\}\cup \bx_{\gamma\alpha},0]].\end{equation} Meanwhile,  Corollary \ref{cor:functgen} says that there is an open book $\bar\ob$ compatible with $\bar\xi$ such that \begin{equation}\label{eqn:B12}B(\mathbf{1})=c_{\HF}(\bar\ob).\end{equation}
 It follows from \eqref{eqn:B1} and \eqref{eqn:B12} that \begin{equation}\label{eqn:chf2}[[\bcc_{\gamma\alpha}\cup \{u_{\gamma\alpha}\}\cup \bx_{\gamma\alpha},0]]=c_{\HF}(\bar\ob).\end{equation}  As the class $c_{\HF}(\ob)\otimes 1$ is given by \[c_{\HF}(\ob)\otimes 1=[\bcc_{\gamma\alpha}\otimes 1]\in H_*(\sfc(\Sigma,\{\gamma_1,\dots,\gamma_n\},\{\alpha_1,\dots,\alpha_n\})\otimes \Lambda)=\sfh(-M,-\Gamma)\otimes\Lambda,\]  per Remark \ref{rmk:reps}, Theorem \ref{thm:iso} combined with \eqref{eqn:chf2}  implies that the isomorphism \[A:=(f_{\gamma\alpha})_*:\sfh(-M,-\Gamma)\otimes \Lambda\to  \hfp(-Y|{-}R;\Gamma_\eta)\] sends $c_{\HF}(\ob)\otimes 1$ to $c_{\HF}(\bar\ob)$, proving Theorem \ref{thm:mainob2}.
\end{proof}

\subsection{Proof of Theorem  \ref{thm:iso}} 
\label{ssec:proofiso}

The maps $f_{\beta\alpha}$ and $f_{\gamma\alpha}$ we have in mind in \eqref{eqn:qi1} and \eqref{eqn:qi2} of Theorem \ref{thm:iso} are the $\Lambda$-linear maps 
\begin{align*}
f_{\beta\alpha}&:\sfc(\Sigma,\{\beta_1,\dots,\beta_n\},\{\alpha_1,\dots,\alpha_n\})\otimes\Lambda\to\cfp(\ul\Sigma,\boldsymbol\beta,\boldsymbol\alpha,z|{-}R_{\beta\alpha};\Gamma_{\eta})\\
f_{\gamma\alpha}&:\sfc(\Sigma,\{\gamma_1,\dots,\gamma_n\},\{\alpha_1,\dots,\alpha_n\})\otimes\Lambda\to\cfp(\ul\Sigma,\boldsymbol\gamma,\boldsymbol\alpha,z|{-}R_{\gamma\alpha};\Gamma_{\eta})
\end{align*} which send a generator $\bx\otimes 1$ to $ [\bx\cup \{u_{\beta\alpha}\}\cup \bx_{\beta\alpha},0]$ and $ [\bx\cup \{u_{\gamma\alpha}\}\cup \bx_{\gamma\alpha},0]$, respectively. We will focus exclusively on the case of $f_{\gamma\alpha}$; the proof of Theorem \ref{thm:iso} for $f_{\beta\alpha}$ proceeds identically.

\begin{lemma}
\label{lem:fchainmap}Sufficiently large $g$ and sufficient winding guarantee that $f_{\gamma\alpha}$ is a chain map. 
\end{lemma}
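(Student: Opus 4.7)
The plan is to compare $\partial^+ f_\gamma(\bx\otimes 1)$ with $f_\gamma(d\bx\otimes 1)$ and to show that they agree after sufficient winding and for sufficiently large $g$. Since $f_\gamma(\bx\otimes 1) = [\bx\cup\{u_\gamma\}\cup\bx_\gamma, 0]$, positivity of domains forces every Whitney disk $\phi$ contributing to the differential to satisfy $n_z(\phi)=0$, and by Lemma \ref{lem:bot} its terminal intersection point must have the form $\by\cup\{s\}\cup\bx_\gamma$ with $s\in\{u_\gamma, v_\gamma\}$. There are then two cases to handle, one for each value of $s$, both via winding arguments that closely mirror those in the proofs of Proposition \ref{prop:winding} and Lemma \ref{lem:stdtri}.

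I will first rule out contributions with $s=v_\gamma$, following Subcase~2.2 of the proof of Lemma \ref{lem:stdtri}. Given such a $\phi$, using the bigon $C$ of Figure \ref{fig:bigon} I will write $D(\phi) = D' + C - \ul\Sigma + a_\gamma P_\gamma + \sum a_i P_i + \sum b_i Q_i + \sum c_j Q_j$, where $D'$ is a sutured disk in $(\Sigma,\{\gamma_1,\dots,\gamma_n\},\{\alpha_1,\dots,\alpha_n\})$ from $\bx$ to $\by$. Analysis of the multiplicities in the regions $R_1, R_2$ together with the winding-based bounds on $\ul n_{w_i^\pm}$ will force $a_\gamma=1$, and $a_i = c_j = 0$, and will bound $|b_j|\leq\ul n_{w_{i_j}}(D')+1$. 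The Maslov-index computation of Lemma \ref{lem:bot} (using the formula from \cite{lip}) then yields $\mu(\phi)\leq M - 2g + 1$ for a constant $M$ depending only on the sutured diagram, so taking $2g > M$ kills all such contributions.

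Next I will address $s=u_\gamma$. An identical winding analysis forces $a_\gamma=0$, $a_i=c_j=0$, and $\partial D(\phi)$ disjoint from every $\alpha_i, \beta_i, \gamma_i$ with $i>n$. Hence $D(\phi)$ is supported in $\Sigma\subset\ul\Sigma$ and is the domain of a unique sutured Whitney disk $\phi'\in\pi_2(\bx,\by)$ in $(\Sigma,\{\gamma_1,\dots,\gamma_n\},\{\alpha_1,\dots,\alpha_n\})$, with $\mu(\phi')=\mu(\phi)=1$. Conversely each such $\phi'$ lifts uniquely to a Whitney disk between $\bx\cup\{u_\gamma\}\cup\bx_\gamma$ and $\by\cup\{u_\gamma\}\cup\bx_\gamma$, and the pseudoholomorphic moduli spaces are in bijection under this correspondence. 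Moreover, since $\partial_\alpha\phi\subset\alpha_1\cup\dots\cup\alpha_n\subset\Sigma$ and $\eta\subset\ul R_S$ is disjoint from $\Sigma$, the twisting factor $t^{\partial_\alpha\phi\cdot\eta}$ equals $1$.

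Combining these, $\partial^+[\bx\cup\{u_\gamma\}\cup\bx_\gamma, 0] = \sum_{\by} \#(\mathcal{M}(\phi')/\mathbb{R})\cdot[\by\cup\{u_\gamma\}\cup\bx_\gamma,0] = f_\gamma(d\bx\otimes 1)$, proving $f_\gamma$ is a chain map. The main obstacle is the careful repetition of the winding bookkeeping of Lemma \ref{lem:stdtri} in the index-$1$ disk setting rather than the index-$0$ triangle setting; in particular, the Maslov bound $\mu(\phi)\leq M - 2g + 1$ that kills the $v_\gamma$ contributions for large $g$ is what requires both the winding hypothesis and the large-$g$ hypothesis simultaneously.
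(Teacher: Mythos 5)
Your proposal follows the same overall strategy as the paper, which factors this result through Lemma~\ref{lem:stddisk}: show that any positive Whitney disk of index one with $n_z=0$ out of $[\bx\cup\{u_\gamma\}\cup\bx_\gamma,0]$ either lands on a generator of the form $\by\cup\{u_\gamma\}\cup\bx_\gamma$ with domain supported in $\Sigma$, or is impossible. Your $s=u_\gamma$ analysis and the Maslov-index argument that kills the $s=v_\gamma$ contributions for $2g$ large are essentially the paper's Case~1 and Subcase~2.2 of Lemma~\ref{lem:stddisk}, inlined rather than stated as a separate lemma.

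There is, however, a genuine gap in your treatment of the $s=v_\gamma$ case. You write $D(\phi)=D'+C-\ul\Sigma+a_\gamma P_\gamma+\cdots$ ``where $D'$ is a sutured disk in $(\Sigma,\{\gamma_1,\dots,\gamma_n\},\{\alpha_1,\dots,\alpha_n\})$ from $\bx$ to $\by$,'' and then bound $\mu(\phi)$ using $D'$. But nothing guarantees that such a sutured Whitney disk exists; the paper's Subcase~2.1 handles precisely this. The decomposition one actually has a priori is in terms of a fixed Whitney disk $D$ in the big diagram together with $(\gamma,\alpha)$-periodic domains, and $D'$ is the result of a change of basis with prescribed boundary behavior -- it is not a priori a sutured disk and need not be positive. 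The winding analysis shows that if $D(\phi)\geq 0$ then $D'$ must be supported in $\Sigma$ and hence \emph{is} a sutured disk from $\bx$ to $\by$; when no such sutured disk exists, this is a contradiction, so $D(\phi)$ must have a negative multiplicity. Your Maslov bound only applies once you are in the situation where a sutured disk $E$ exists, and the case where none exists has to be argued separately. A smaller issue: the periodic-domain basis you use, $\{P_\gamma,P_i,Q_j\}$, is the one for the \emph{triple} diagram $(\ul\Sigma,\gamma,\beta,\alpha,z)$; the $P_i$ with $i>n$ have boundary $\gamma_i-\beta_i$ and are not $(\gamma,\alpha)$-periodic. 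For the differential on $\cfp(\gamma,\alpha|R_\gamma;\Gamma_\eta)$ you want the basis $\{P_\gamma,S_1,\dots,S_r\}$ the paper introduces in~(\ref{eqn:basisd}).
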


 As we shall see, this  follows easily from Lemma \ref{lem:stddisk} below. Roughly, this lemma states (in the case  $k=1$) that for  large $g$   and sufficient winding, the Whitney disks with $n_z=0$ between generators of  the Heegaard Floer complex  \[\cfp(\ul\Sigma,\boldsymbol\gamma,\boldsymbol\alpha,z|{-}R_{\gamma\alpha};\Gamma_{\eta})\] with any hope of having holomorphic representatives (i.e. with no negative multiplicities) are supported in the $\Sigma$ portion of $\ul\Sigma$, meaning that they have holomorphic representatives iff the corresponding Whitney disks in the sutured diagram \[(\Sigma,\{\gamma_1,\dots,\gamma_n\},\{\alpha_1,\dots,\alpha_n\})\] do.

\begin{lemma}
\label{lem:stddisk} Fix a pair of intersection points,  \begin{equation}\label{eqn:xy} \bx,\by\in (\gamma_1\times\dots\times\gamma_n)\cap (\alpha_1\times\dots\times\alpha_n)\in \operatorname{Sym}^n(\Sigma)\end{equation} and an integer $k$. For sufficiently large $g$ and sufficient winding, the following is true: for any Whitney disk  \begin{equation*}\label{eqn:whitdisk}\phi\in \pi_2(\bx\cup\{u_{\gamma\alpha}\}\cup \bx_{\gamma\alpha}, \by\cup\{s\}\cup \bx_{\gamma\alpha})\end{equation*} with $s\in \{u_{\gamma\alpha},v_{\gamma\alpha}\}$, where 
\begin{enumerate}
\item $\mu(\phi)=k$, 
\item $n_z(\phi)=0$, and
\item $D(\phi)$ has no negative multiplicities,
\end{enumerate}
we have that
\begin{itemize}
\item  $s=u_{\gamma\alpha}$, and
\item the domain  $D(\phi)$ is supported in $\Sigma\subset \ul\Sigma$. 
\end{itemize}
\end{lemma}

Our proof of this lemma involves a careful balancing of multiplicities against Maslov index along the lines of, and using crucially, the technical Lemmas \ref{lem:tech13} and \ref{lem:tech23}.

\begin{proof}[Proof of Lemma \ref{lem:stddisk}]
Fix  $\bx$ and $\by$ as in \eqref{eqn:xy}, and an integer $k$.
  We will break the proof into two cases.

\noindent \emph{\underline{Case 1: $s=u_{\gamma\alpha}$}}.  Suppose \[g\geq 2pL+2\] as in the hypothesis of Lemma \ref{lem:tech13}, for $L$ as in \eqref{eqn:L}. Fix a Whitney disk in \begin{equation*}\label{eqn:pi2}\pi_2( \bx\cup\{u_{\gamma\alpha}\}\cup \bx_{\gamma\alpha}, \by\cup\{u_{\gamma\alpha}\}\cup \bx_{\gamma\alpha})\end{equation*} with domain $D$ satisfying   $n_z(D)=0$ (if no such disk exists then the lemma holds vacuously). The boundary of $D$ consists of  
\begin{itemize}
\item integer multiples of complete $\gamma_i$ and   $\alpha_i$ curves for $i=n{+}1,\dots,n{+}2g{+}1$, and
\item  integer multiples of arcs of the $\gamma_i$ and  $\alpha_i$ curves for $i=1,\dots,n$.
\end{itemize} Recall from Subsection \ref{ssec:periodic} that for each $i=1,\dots,n{+}2g{+}1$ there is a curve which intersects $\gamma_i$ and $\alpha_i$ positively in exactly one point each and  all other $\boldsymbol\gamma$ and $\boldsymbol\alpha$ curves zero times algebraically. It follows that $\gamma_i$ and $\alpha_i$ appear with opposite multiplicities in the boundary of $D$. We may therefore assume, after adding some integer  linear combination of the elements in the basis \eqref{eqn:basisDextended}, \[\{\P_{\gamma\alpha},\D_1,\dots,\D_m\},\]  that the boundary of $D$ is  disjoint from \begin{itemize}
\item  $\gamma_{i_j}$ and $\alpha_{i_j}$ for $j=p{+}1,\dots,m$, and
\item  $\gamma_{n+2g+1}$ and $\alpha_{n+2g+1}$.
\end{itemize} 
This $D$ will serve as a reference domain for the rest of the proof.

Fix some integer $M$ such that \begin{equation}\label{eqn:choiceN3}M-|k|>|\mu(D)| + \max\{|\ul n_{w_{i_j}}(D)| \mid j=1,\dots,m\}.\end{equation} In particular, this implies that \begin{align}
\label{eqn:muDN}\mu(D) &<M+k, \textrm{ and }\\
\label{eqn:nwDN}\ul n_{w_{i_j}}(D)&<M, \textrm { for } j=1,\dots,m.
\end{align} Note that winding cannot  change the second inequality. Indeed,  winding can only  \emph{potentially} affect \[\ul n_{w_{i_j}}(D)\] for $j=p{+}1,\dots,m$, as discussed in Subsection \ref{ssec:periodic}, and it does not do so in this case since  the boundary of $D$ is  disjoint from $\alpha_{i_j}$ for such $j$, by construction. As mentioned above,  the curves $\gamma_i$ and $\alpha_i$  appear with opposite  multiplicities in the boundary of $D$ for all $i=n{+}1,\dots,n{+}2g$. If this multiplicity is nonzero for some such $i$, then we can ensure that \begin{equation}\label{eqn:windinglots}\ul n_{w_{i}}(D)\leq -2M\end{equation} by winding sufficiently around $\eta_i$, as described in Subsection \ref{ssec:periodic}. After such winding, we may therefore assume  that \eqref{eqn:windinglots} holds for every $i=n{+}1,\dots,n{+}2g$ for which  $\gamma_i$ and $\alpha_i$ appear with nonzero multiplicities in the boundary of $D$, and  that the conclusion of Lemma \ref{lem:tech13} holds for  $N=M$. 

With the above established, let us now suppose $\phi$ is any  Whitney disk, \[\phi\in \pi_2( \bx\cup\{u_{\gamma\alpha}\}\cup \bx_{\gamma\alpha}, \by\cup\{u_{\gamma\alpha}\}\cup \bx_{\gamma\alpha}),\]  where $\mu(\phi)=k$, $n_z(\phi)=0$, and $D(\phi)$ has no negative multiplicities, as in the hypothesis of the lemma. Then we can write \begin{equation}\label{eqn:Dphi}D(\phi)=D+P\end{equation} for some integer linear combination $P$ of the elements of the basis in \eqref{eqn:basisDextended}.

Let $\spc\in \SA(-Y|{-}R)$ denote the $\Sc$ structure represented by the generators \[ \bx\cup\{u_{\gamma\alpha}\}\cup \bx_{\gamma\alpha}\textrm{ and } \by\cup\{u_{\gamma\alpha}\}\cup \bx_{\gamma\alpha}.\] 
We have that \[\mu(\phi) = \mu(D) +\langle c_1(\spc),[P]\rangle,\] by \cite[Theorem 4.9]{osz8}. The fact that  $\mu(\phi)=k$, together with \eqref{eqn:muDN}, then forces  \begin{equation}\label{eqn:c1N3}\langle c_1(\spc),[P]\rangle>-M.\end{equation} Meanwhile, the fact that $D(\phi)$ has no negative multiplicities, together with \eqref{eqn:nwDN},  forces \begin{equation}\label{eqn:nwijN3}\ul n_{w_{i_j}}(P)>-M\end{equation} for  $j=1,\dots,m$.  Now   \eqref{eqn:c1N3} and \eqref{eqn:nwijN3}, together with Lemma \ref{lem:tech13}, imply that the coefficient of $\D_j$ in $P$ is zero for all $j=p{+}1,\dots,m.$ That is,  $P$ is an integer linear combination \[P=a_{\gamma\alpha} \P_{\gamma\alpha} + \sum_{i=1}^p b_i \D_i.\]Therefore,  we can rewrite \eqref{eqn:Dphi} as \begin{equation}\label{eqn:domainD13}D(\phi)=\Big(D+ \sum_{i=1}^p b_i \D_i \Big) + a_{\gamma\alpha} \P_{\gamma\alpha}.\end{equation} We will show next  that the domain in parentheses  is contained in $\Sigma\subset \ul\Sigma$, \begin{equation}\label{eqn:domainsigma}D+ \sum_{i=1}^p b_i \D_i\subset \Sigma.\end{equation} Since this domain is disjoint from the basepoint $z$ and $\D_1,\dots,\D_p$ are contained in $\Sigma$, the only way \eqref{eqn:domainsigma} can fail  is if $\gamma_i$ and $\alpha_i$ appear with (opposite) nonzero multiplicities in the boundary of $D$ for some $i=n{+}1,\dots,n{+}2g$. Suppose, for a contradiction, that this is the case. Then  the fact that $D(\phi)$ has no negative multiplicities, combined with \eqref{eqn:windinglots} and the fact that 
\[n_{w_i^{\pm}}(\P_{\gamma\alpha})= 2,\]   forces \[a_{\gamma\alpha}\geq M.\] We set $P' = P/M$ and  apply the $N=1$ case of Lemma \ref{lem:tech23}\footnote{We can do this because the  hypothesis of Lemma \ref{lem:tech23}  in the $N=1$ case is that $g\geq 2pL+2$, which we have assumed, and that the coefficient of $\P_{\gamma\alpha}$ is $\geq 1$, which is true for $P'$ since $a_{\gamma\alpha}/M\geq 1$.}   to $P'$ to see that either
\begin{itemize}
\item $\langle c_1(\spc),[P']\rangle \leq -1$, or
\item $\underline{n}_{w_{i_j}}(P') \leq -1$ for some $j \in \{1,\dots,p\}$,
\end{itemize}
which is equivalent to either
\begin{itemize}
\label{eqn:cN1}\item $\langle c_1(\spc),[P]\rangle \leq -M$, or
\label{eqn:cN2}\item $\ul n_{w_{i_j}}(P)\leq -M$ for some $j\in\{1,\dots,p\}$.\footnote{The reader may wonder why we cannot conclude these inequalities directly from Lemma \ref{lem:tech23}; that is, why dividing by $M$ above is a necessary step. The answer is that to apply Lemma \ref{lem:tech23} directly we would need that $g\geq pLM+pL+M+1$ while we have only assumed that $g\geq 2pL+2$. We do not want to assume $g\geq pLM+pL+M+1$ at the outset, as this $M$ may depend on the specific diagram for the closure.}
\end{itemize} But these contradict either \eqref{eqn:c1N3} or \eqref{eqn:nwijN3}. We may conclude that \eqref{eqn:domainsigma} holds. Furthermore, note that \eqref{eqn:domainD13} and \eqref{eqn:domainsigma} imply that \[D + \sum_{i=1}^p b_i \D_i\] is the domain of a Whitney disk in $\pi_2(\bx,\by).$ In summary, we have shown that for $g\geq 2pL+2$ and sufficient winding, any Whitney disk $\phi$ as in the statement of the lemma has domain \begin{equation}\label{eqn:domainD23}D(\phi) = D(\phi') + a_{\gamma\alpha}\P_{\gamma\alpha},\end{equation} for some nonnegative integer $a_{\gamma\alpha}$ (if it is not nonnegative then $D(\phi)$  has a negative multiplicity), where  \begin{equation}\label{eqn:phi'3}\phi'\in\pi_2(\bx,\by)\end{equation} is a Whitney disk in the sutured Heegaard  diagram \begin{equation}\label{eqn:shd}(\Sigma,\{\gamma_1,\dots,\gamma_n\},\{\alpha_1,\dots,\alpha_n\}).\end{equation}  

To complete the proof of the lemma in this case, we show next that for sufficiently large $g$ and sufficient winding it must also be true that $a_{\gamma\alpha}=0$ in \eqref{eqn:domainD23}. For this, fix some  \[\phi''\in\pi_2(\bx,\by)\] for reference. Fix some  $N$ with\begin{equation}\label{eqn:choiceN23}N-|k|>|\mu(\phi'')| + \max\{|\ul n_{w_i}(\phi'')| \mid i=1,\dots,n\}.\end{equation} Note that the range of values for $N$ which satisfy this inequality depends only on $k$ and the sutured diagram. As before, this implies that \begin{align}
\label{eqn:muDN2}\mu(\phi'') &<N+k, \textrm{ and }\\
\label{eqn:nwDN2}\ul n_{w_i}(\phi'')&<N, \textrm { for } i=1,\dots,n.
\end{align}Suppose  \begin{equation}\label{eqn:genuslem3}g\geq \max\{2pL+2,pLN+pL+N+1\},\end{equation} and   wind sufficiently  that any  Whitney disk $\phi$ as in the statement of the lemma can be written in the form \eqref{eqn:domainD23}.  
Suppose $\phi$ is such a  disk, and write $D(\phi)$ in this form, \[D(\phi) = D(\phi') +a_{\gamma\alpha} \P_{\gamma\alpha}.\] The domains $D(\phi')$ and $D(\phi'')$ differ by a doubly-periodic domain in the sutured Heegaard  diagram \eqref{eqn:shd}. We can therefore write  \[D(\phi') = D(\phi'') + \sum_{i=1}^p b_i \D_i\] for some integers $b_1,\dots,b_p$. We  thus have \[D(\phi) = D(\phi'')  + P,\] where  \[P=a_{\gamma\alpha} \P_{\gamma\alpha} +\sum_{i=1}^p b_i \D_i.\]   Therefore, \[\mu(\phi)=\mu(\phi'')+\langle c_1(\spc),[P]\rangle.\] The fact that  $\mu(\phi)=k$, together with \eqref{eqn:muDN2}, then forces  \begin{equation}\label{eqn:c1P3}\langle c_1(\spt),[P]\rangle>-N.\end{equation} Suppose for a contradiction that $a_{\gamma\alpha}\neq 0$. Then $a_{\gamma\alpha}\geq 1$ since it is an integer. But then Lemma \ref{lem:tech23}, together with \eqref{eqn:c1P3}, implies that  \[\ul n_{w_{i_j}}(P)\leq -N\] for some $j=1,\dots,p$. But this, together with \eqref{eqn:nwDN2}, implies that \[\ul n_{w_{i_j}}(D(\phi))<0,\]  a contradiction. This shows that $a_{\gamma\alpha}=0$, proving Lemma \ref{lem:stddisk} in this case.

\noindent \emph{\underline{Case 2: $s=v_{\gamma\alpha}$}}. This case follows quickly from the previous case. Let $B$ be the bigon  shown in Figure \ref{fig:bigon}, with vertices at $u_{\gamma\alpha}$ and $v_{\gamma\alpha}$ and $n_z(B)=1$. 

\begin{figure}[ht]
\labellist
\tiny

\pinlabel $u_{\gamma\alpha}$ at 150 266
\pinlabel $v_{\gamma\alpha}$ at 150 187
\pinlabel $z$ at 155 228

\endlabellist
\centering
\includegraphics[width=4cm]{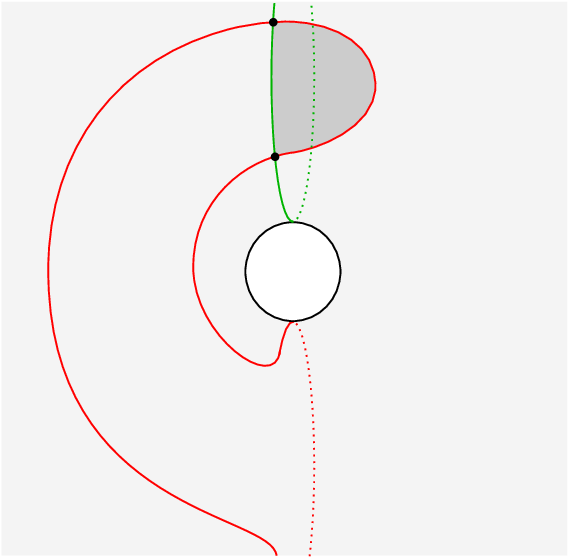}
\caption{The bigon $B$, shaded.}
\label{fig:bigon}
\end{figure}

\noindent Observe that for any \[\phi\in\pi_2( \bx\cup\{u_{\gamma\alpha}\}\cup \bx_{\gamma\alpha}, \by\cup\{v_{\gamma\alpha}\}\cup \bx_{\gamma\alpha})\] where $\mu(\phi)=k$, $n_z(\phi)=0$, and $D(\phi)$ has no negative multiplicities, as in the hypothesis of the lemma, we can write \begin{equation}\label{eqn:dphi-3}D(\phi) = D(\phi') + B-\ul\Sigma\end{equation} for some Whitney disk \[\phi'\in\pi_2( \bx\cup\{u_{\gamma\alpha}\}\cup \bx_{\gamma\alpha}, \by\cup\{u_{\gamma\alpha}\}\cup \bx_{\gamma\alpha})\] where $\mu(\phi')=k+1$, $n_z(\phi')=0$, and $D(\phi')$ has no negative multiplicities ($B-\ul\Sigma$ contributes $1-2=-1$ to the Maslov index). We proved in the previous case that for sufficiently large $g$ and sufficient winding, any such $\phi'$ satisfies \[D(\phi')\subset \Sigma.\] Suppose then that $g$ is large enough and we have wound sufficiently that this holds, and let $\phi$ be as above. Then $D(\phi)$ has a negative multiplicity in the region $\ul R_S$ by \eqref{eqn:dphi-3}, since $B-\ul\Sigma$ does and $D(\phi')$ has multiplicity zero in this region, a contradiction. We conclude that for sufficiently large $g$ and sufficient winding, there is no Whitney disk $\phi$ as in the statement of the lemma when $s=v_{\gamma\alpha}.$
 \end{proof}

We now explain how Lemma \ref{lem:stddisk} implies Lemma \ref{lem:fchainmap}.

\begin{proof}[Proof of Lemma \ref{lem:fchainmap}]
Suppose   $g$ is large enough and the winding sufficient for the conclusion of Lemma \ref{lem:stddisk} to hold. It suffices to show (by Lemma \ref{lem:bot}), for each pair $\bx, \by$ as in \eqref{eqn:xy} that the coefficient of $[\by\cup \{s\}\cup \bx_{\gamma\alpha},0]$ in $f_{\gamma\alpha}(d \bx\otimes 1)$ is the same as its coefficient in $\partial f_{\gamma\alpha}(\bx\otimes 1)$, for $s = u_{\gamma\alpha}$ or $v_{\gamma\alpha}$, where $d$ is the differential on \[\sfc(\Sigma,\{\gamma_1,\dots,\gamma_n\},\{\alpha_1,\dots,\alpha_n\})\] and $\partial$ is the differential on $\cfp(\ul\Sigma,\boldsymbol\gamma,\boldsymbol\alpha,z|{-}R_{\gamma\alpha};\Gamma_{\eta})$. Note that both coefficients 
\[ \langle f_{\gamma\alpha}(d \bx\otimes 1), [\by\cup \{s\}\cup \bx_{\gamma\alpha},0]\rangle\,\textrm{ and } \langle \partial f_{\gamma\alpha}(\bx\otimes 1), [\by\cup \{s\}\cup \bx_{\gamma\alpha},0]\rangle\] are zero if $s=v_{\gamma\alpha}$. This is by definition for the first and by Lemma \ref{lem:stddisk} in the case $k=1$ for the second. We therefore only need consider the case $s=u_{\gamma\alpha}$. By definition, we have that  \begin{align}
\label{eqn:c1}\langle f_{\gamma\alpha}(d \bx\otimes 1), [\by\cup \{u_{\gamma\alpha}\}\cup \bx_{\gamma\alpha},0]\rangle &= \langle d \bx,\by\rangle,\\
\label{eqn:c2}\langle \partial f_{\gamma\alpha}(\bx\otimes 1), [\by\cup \{u_{\gamma\alpha}\}\cup \bx_{\gamma\alpha},0]\rangle &= \langle \partial([\bx\cup \{u_{\gamma\alpha}\}\cup \bx_{\gamma\alpha},0]), [\by\cup \{u_{\gamma\alpha}\}\cup \bx_{\gamma\alpha},0]\rangle.
\end{align} But it follows immediately from Lemma \ref{lem:stddisk} that the coefficients on the right hand sides of \eqref{eqn:c1} and \eqref{eqn:c2} are equal: any Whitney disk contributing to the coefficient in \eqref{eqn:c1} contributes the same amount to the coefficient in \eqref{eqn:c2}, and Lemma \ref{lem:stddisk} tells us that the converse is true (note that any domain contained in $\Sigma$ is disjoint from $\eta$).
\end{proof}

We will henceforth assume that $g$ is sufficiently large and that we have wound sufficiently to guarantee that $f_{\gamma\alpha}$ is a chain map. 

To show that $f_{\gamma\alpha}$ is a quasi-isomorphism (assuming sufficiently large $g$ and sufficient winding), we will show that it is a filtered chain map for some filtrations on the domain and codomain complexes, and that it induces an isomorphism between $E_1$ pages of the spectral sequences associated to these filtrations. We will first define a  filtration on the codomain  $\cfp(\boldsymbol\gamma,\boldsymbol\alpha|{-}R_{\gamma\alpha};\Gamma_{\eta})$ in each $\Sc$ structure.

Let $E$ and $F$ be the points in $\ul R_S$ and $\ul R_\Sigma$ shown in Figures \ref{fig:uvuv2} and \ref{fig:periodic}. Given generators \[[\bx\cup\{s\}\cup \bx_{\gamma\alpha}, i] \textrm{ and } [\by\cup\{s'\}\cup \bx_{\gamma\alpha},j] \textrm{ of } \cfp(\ul\Sigma,\boldsymbol\gamma,\boldsymbol\alpha,z|{-}R_{\gamma\alpha};\Gamma_{\eta})\] representing the same $\Sc$ structure, for $\bx,\by$ as in \eqref{eqn:xy} and $s,s'\in\{u_{\gamma\alpha},v_{\gamma\alpha}\}$,  choose a Whitney disk \[\phi\in\pi_2(\bx\cup\{s\}\cup \bx_{\gamma\alpha},\by\cup\{s'\}\cup \bx_{\gamma\alpha})\] with $n_z(\phi) = i-j$, and define the relative grading \begin{equation*}\label{eqn:relfilt} \mathcal{F}([\bx\cup\{s\}\cup \bx_{\gamma\alpha}, i],[\by\cup\{s'\}\cup \bx_{\gamma\alpha},j]) = 2n_E(\phi)-n_F(\phi). \end{equation*}
This relative grading is well-defined since any two such $\phi$ differ by a periodic domain $P$ and \[2n_E(P){-}n_F(P)=0\] for all $(\boldsymbol\gamma,\boldsymbol\alpha)$-periodic domains, by \eqref{eqn:EFzero}.  Moreover, we have the following.

 \begin{lemma}
 \label{lem:efphi}
 For $\bx$ and $\by$ as above, we have that  \[2n_E(\phi){-}n_F(\phi)=0\] for any \[\phi\in\pi_2(\bx\cup\{s\}\cup \bx_{\gamma\alpha},\by\cup\{s'\}\cup \bx_{\gamma\alpha})\] with  $s=s'$ and $n_z(\phi)=0$.
 \end{lemma}
 
 \begin{proof}
 As in \eqref{eqn:Dphi},  we can write \[D(\phi)=D+P\] for some domain $D$ with $n_z(D)=0$ whose boundary is disjoint from $\gamma_{n+2g+1}$ and $\alpha_{n+2g+1}$, and some  linear combination $P$ of the elements of the basis in \eqref{eqn:basisDextended}.
We then have  \[2n_E(P)-n_F(P)=0=2n_E(D)-n_F(D).\] The first equality follows  from \eqref{eqn:EFzero}, while the second follows from the facts that $n_z(D)=0$ and that the boundary of $D$ is disjoint from $\gamma_{n+2g+1}$ and $\alpha_{n+2g+1}$. These equalities then imply that $2n_E(\phi)-n_F(\phi)=0$. \end{proof}
 In addition, we note here that \begin{align*}
2n_E(\ul\Sigma)-n_F(\ul\Sigma)&=1,\\
2n_E(B)-n_F(B)&=0,
\end{align*}
 where $B$ is the bigon in Figure \ref{fig:bigon}. These facts will be useful for Lemmas \ref{lem:filt} and \ref{lem:d0} below. For each $\Sc$ structure, let us choose some lift of this relative grading to an absolute grading, which we also denote by $\mathcal{F}$.

\begin{lemma}
\label{lem:filt}
Sufficiently large $g$ and sufficient winding guarantee that the absolute grading $\mathcal{F}$ defines a filtration.
\end{lemma}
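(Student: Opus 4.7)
The plan is to deduce this lemma directly from Lemma~\ref{lem:stddisk}, which, for sufficiently large $g$ and sufficient winding, describes exactly the holomorphic disks that can contribute to the differential on $\cfp(\gamma,\alpha|R_\gamma;\Gamma_\eta)$.

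First I would verify that $\mathcal{F}$ gives a well-defined relative grading on each $\Sc$-summand: this requires checking that $2n_A - n_B$ vanishes on the basis of periodic domains $\{P_\gamma, S_1,\dots,S_r\}$. For $P_\gamma$ this follows from the multiplicities displayed in Figure~\ref{fig:periodic}, which give $n_A(P_\gamma)=1$ and $n_B(P_\gamma)=2$, so $2n_A(P_\gamma) - n_B(P_\gamma)=0$. For the $S_j$ with $j\leq q$, which are supported in $\Sigma\subset\ul R_\Sigma$, one uses that $A\in \ul R_S$ and that $B$ is chosen in the tube region of $\ul R_\Sigma$ outside $\Sigma$, so that $n_A(S_j)=n_B(S_j)=0$. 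The remaining cases $j>q$ reduce to an analogous check, using that the boundary components coming from the $\alpha_i,\gamma_i$ with $n<i\leq n+2g$ pair up across the connecting tubes and so cancel contributions at $A$ and $B$.

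For the filtration property itself, I would invoke Lemma~\ref{lem:bot} to write every generator of $\cfp(\gamma,\alpha|R_\gamma;\Gamma_\eta)$ as $[\by\cup\{s\}\cup\bx_\gamma,i]$ with $s\in\{u_\gamma,v_\gamma\}$, and then apply Lemma~\ref{lem:stddisk} (with $\mu=1$ and $n_z=0$) to any holomorphic disk $\phi$ contributing to $\partial$. The conclusion of that lemma is that $s=s'$ and $D(\phi)\subset\Sigma$. Since both $A$ and $B$ lie outside $\Sigma$, this forces $n_A(\phi)=n_B(\phi)=0$, and hence $\mathcal{F}(x)-\mathcal{F}(y)=2n_A(\phi)-n_B(\phi)=0$ whenever $y$ appears with nonzero coefficient in $\partial x$. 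Thus $\partial$ preserves $\mathcal{F}$ exactly, which in particular means $\mathcal{F}$ defines a filtration on $\cfp(\gamma,\alpha|R_\gamma;\Gamma_\eta)$.

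The hard part is really Lemma~\ref{lem:stddisk} itself, whose proof exploits the winding to push contributing disks into the $\Sigma$ portion of $\ul\Sigma$; given that lemma, the present statement is essentially an immediate corollary, modulo the bookkeeping check in the first paragraph above.
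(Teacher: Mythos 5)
Your first paragraph, verifying that $2n_A - n_B$ vanishes on the periodic-domain basis $\{P_\gamma, S_1,\dots,S_r\}$, is a reasonable expansion of what the paper asserts in one line. However, the second paragraph has a genuine gap that breaks the argument.

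You apply Lemma~\ref{lem:stddisk} ``with $\mu=1$ and $n_z=0$'' to ``any holomorphic disk $\phi$ contributing to $\partial$,'' but the differential $\partial^+$ on $\cfp(\gamma,\alpha|R_\gamma;\Gamma_\eta)$ counts Whitney disks with \emph{arbitrary non-negative} $n_z(\phi)$; a disk with $n_z(\phi)=k>0$ sends $[\bx,i]$ to $[\by,i-k]$. Lemma~\ref{lem:stddisk} only constrains disks with $n_z(\phi)=0$, so it simply does not apply to the $U$-decreasing components of the differential. Such disks necessarily have positive multiplicity at $z$, so their domain cannot be supported in $\Sigma$, and there is no reason for their $A$- and $B$-multiplicities to vanish. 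In particular, your conclusion that ``$\partial$ preserves $\mathcal{F}$ exactly'' is false; if it were true there would be no spectral sequence to run, and Lemma~\ref{lem:d0} would be vacuous. (Indeed, Lemma~\ref{lem:d0} exhibits an $\mathcal{F}$-preserving component of $\partial$ that itself has $n_z=1$, namely the term $[\bx\cup\{v_\gamma\}\cup\bx_\gamma,i-1]$, which your argument excludes.)

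The paper's proof handles this by writing $D(\phi) = D' + (i-j)\ul\Sigma + a_\gamma P_\gamma + \sum b_i S_i + \sum c_i S_i$ for an arbitrary contributing disk (and with a $\pm C$ correction when $s \neq s'$), and then computing $2n_A(\phi)-n_B(\phi)$ directly from this decomposition, obtaining $i-j$, $i-j-1$, or $i-j+1$ in the three cases $s=s'$, $(s,s')=(u_\gamma,v_\gamma)$, $(s,s')=(v_\gamma,u_\gamma)$. This is where sufficient winding and large $g$ actually enter: for $(s,s')=(u_\gamma,v_\gamma)$, the expression $i-j-1$ could be $-1$ when $n_z=0$, and it is precisely the argument of Lemma~\ref{lem:stddisk} that forbids a contributing disk with $n_z=0$ in that case. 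Your approach uses Lemma~\ref{lem:stddisk} only in its literal $n_z=0$ form and misses this interplay. To repair the argument you would need to redo the case analysis for all $n_z\geq 0$, which is essentially what the paper does.
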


\begin{proof} Fix $\bx,\by$ as in \eqref{eqn:xy}. We must show that for sufficiently large $g$ and sufficient winding, the following is true: if the coefficient \begin{equation}\label{eqn:coeff}\langle \partial([\bx\cup\{s\}\cup \bx_{\gamma\alpha}, i]),[\by\cup\{s'\}\cup \bx_{\gamma\alpha},j]\rangle\end{equation} is nonzero, then \[\mathcal{F}([\bx\cup\{s\}\cup \bx_{\gamma\alpha}, i])\geq\mathcal{F}([\by\cup\{s'\}\cup \bx_{\gamma\alpha},j]).\] We will break the proof into three cases.

\noindent \emph{\underline{Case 1: $s=s'$}.} Suppose the coefficient in \eqref{eqn:coeff} is nonzero.
Then there is a Whitney disk \[\phi\in\pi_2(\bx\cup\{s\}\cup \bx_{\gamma\alpha},\by\cup\{s\}\cup \bx_{\gamma\alpha})\] with  $n_z(\phi) = i-j\geq 0$.  We can  write   \[D(\phi) = D(\phi')+ (i-j)\ul\Sigma,\] for some \[\phi'\in\pi_2(\bx\cup\{s\}\cup \bx_{\gamma\alpha},\by\cup\{s\}\cup \bx_{\gamma\alpha})\] with $n_z(\phi')=0$. We  therefore have that \begin{align*}\mathcal{F}([\bx\cup\{s\}\cup \bx_{\gamma\alpha}, i])-\mathcal{F}([\by\cup\{s\}\cup \bx_{\gamma\alpha},j])&=2n_E(\phi)-n_F(\phi) \\
&= 2n_E(\phi')-n_F(\phi')+ i-j\\
&=i-j \geq 0,
\end{align*} where the last equality follows from Lemma \ref{lem:efphi}.

\noindent \emph{\underline{Case 2: $(s,s')=(u_{\gamma\alpha},v_{\gamma\alpha})$}}. 
Suppose the coefficient in \eqref{eqn:coeff} is nonzero.
Then there is a Whitney disk \[\phi\in\pi_2(\bx\cup\{u_{\gamma\alpha}\}\cup \bx_{\gamma\alpha},\by\cup\{v_{\gamma\alpha}\}\cup \bx_{\gamma\alpha})\] with $\mu(\phi)=1$,  no negative multiplicities, and  $n_z(\phi) = i-j\geq 0$. We can  write     \[D(\phi) = D(\phi')+ B+ (i-j-1)\ul\Sigma,\] for some  \[\phi'\in\pi_2(\bx\cup\{u_{\gamma\alpha}\}\cup \bx_{\gamma\alpha},\by\cup\{u_{\gamma\alpha}\}\cup \bx_{\gamma\alpha})\] with $n_z(\phi')=0$ and \[\mu(\phi')=-2(i-j-1).\] If $i-j-1<0$, which since $i-j\geq 0$  means that \[i-j-1=-1,\] then $D(\phi')$  has no negative multiplicities; otherwise, $D(\phi)$  clearly would as well, a contradiction. But  Lemma \ref{lem:stddisk}  in the case \[k=-2(i-j-1)=2\] shows that for sufficiently large $g$  and sufficient winding,  $D(\phi')\subset \Sigma$ for  all such $\phi'$. However, in this case, $D(\phi)$ has negative multiplicities in the region $\ul R_S$ since \[B+ (i-j-1)\ul\Sigma\] does and $D(\phi')$ has multiplicity zero in this region, another contradiction. We conclude that for sufficiently large $g$ and sufficient winding, $i-j-1\geq 0$, in which case \begin{align*}
\mathcal{F}([\bx\cup\{u_{\gamma\alpha}\}\cup \bx_{\gamma\alpha}, i])-\mathcal{F}([\by\cup\{v_{\gamma\alpha}\}\cup \bx_{\gamma\alpha},j])&= 2n_E(\phi')-n_F(\phi') + 2n_E(B)-n_F(B)\\
&+ i-j-1\\
&=i-j-1\geq 0,\end{align*} as desired. 

\noindent \emph{\underline{Case 3: $(s,s')=(v_{\gamma\alpha},u_{\gamma\alpha})$}}. 
Suppose the coefficient in \eqref{eqn:coeff} is nonzero.
Then there is a Whitney disk \[\phi\in\pi_2(\bx\cup\{v_{\gamma\alpha}\}\cup \bx_{\gamma\alpha},\by\cup\{u_{\gamma\alpha}\}\cup \bx_{\gamma\alpha})\] with  $n_z(\phi) = i-j\geq 0$. We can then write  \[D(\phi) = D(\phi')-B + (i-j+1)\ul\Sigma\] for some \[\phi'\in\pi_2(\bx\cup\{u_{\gamma\alpha}\}\cup \bx_{\gamma\alpha},\by\cup\{u_{\gamma\alpha}\}\cup \bx_{\gamma\alpha})\] with $n_z(\phi')=0$. We therefore have that \begin{align*}\mathcal{F}([\bx\cup\{v_{\gamma\alpha}\}\cup \bx_{\gamma\alpha}, i])-\mathcal{F}([\by\cup\{u_{\gamma\alpha}\}\cup \bx_{\gamma\alpha},j])&=2n_E(\phi')-n_F(\phi') - 2n_E(B)+n_F(B)\\
&+i-j+1\\
&= i-j+1> 0,\end{align*} as desired.
\end{proof}

Let $\partial_0$ denote the component of the differential $\partial$ on $\cfp(\ul\Sigma,\boldsymbol\gamma,\boldsymbol\alpha,z|{-}R_{\gamma\alpha};\Gamma_{\eta})$ which preserves the grading $\mathcal{F}$, and let $d$ denote the differential on \[\sfc(\Sigma,\{\gamma_1,\dots,\gamma_n\},\{\alpha_1,\dots,\alpha_n\})\otimes\Lambda,\] as in the proof of Lemma \ref{lem:fchainmap}. We have the following.

\begin{lemma}
\label{lem:d0}
Sufficiently large $g$ and sufficient winding guarantee that for each generator \[[\bx\cup\{s\}\cup \bx_{\gamma\alpha},i]\textrm{ of }\cfp(\ul\Sigma,\boldsymbol\gamma,\boldsymbol\alpha,z|{-}R_{\gamma\alpha};\Gamma_{\eta}),\] we have that 
\begin{equation}\label{eqn:d0}\partial_0([\bx\cup\{s\}\cup \bx_{\gamma\alpha},i]) = 
\begin{cases}
[d(\bx)\cup\{u_{\gamma\alpha}\}\cup \bx_{\gamma\alpha},i] + [\bx\cup\{v_{\gamma\alpha}\}\cup \bx_{\gamma\alpha},i-1],&\textrm{ if } s=u_{\gamma\alpha},\\
[d(\bx)\cup\{v_{\gamma\alpha}\}\cup \bx_{\gamma\alpha},i],&\textrm{ if } s=v_{\gamma\alpha}.
\end{cases}\end{equation}
For $i=0$, the term $[\bx\cup\{v_{\gamma\alpha}\}\cup \bx_{\gamma\alpha},i-1]$ above is interpreted as zero in $\cfp$.
\end{lemma}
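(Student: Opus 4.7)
The plan is to identify the $\mathcal{F}$-preserving component $\partial_0$ on each type of generator by combining the Maslov and $n_z$ constraints on contributing disks with the filtration calculations already carried out in Lemma \ref{lem:filt}. Those calculations show that a Whitney disk $\phi \in \pi_2(\bx \cup \{s\} \cup \bx_\gamma,\,\by \cup \{s'\} \cup \bx_\gamma)$ with $n_z(\phi) = i - j$ shifts $\mathcal{F}$ by $i-j$ when $s = s'$, by $i-j-1$ when $(s,s') = (u_\gamma, v_\gamma)$, and by $i-j+1$ when $(s,s') = (v_\gamma, u_\gamma)$. Hence $\partial_0$ counts disks with $n_z(\phi) = 0$, $n_z(\phi) = 1$, and $n_z(\phi) = -1$ in these three cases respectively, and the last is automatically vacuous because any disk contributing to $\partial^+$ must have non-negative multiplicity at $z$. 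This immediately accounts for the absence of a $v_\gamma \to u_\gamma$ term in the stated formula.

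In the case $s = s'$, with $n_z(\phi) = 0$, Lemma \ref{lem:stddisk} applies directly: for $g$ large enough and sufficient winding, the domain $D(\phi)$ is supported in $\Sigma \subset \ul\Sigma$. Such disks are in a moduli-count-preserving bijection with Whitney disks in the sutured Heegaard diagram $(\Sigma, \{\gamma_1, \dots, \gamma_n\}, \{\alpha_1, \dots, \alpha_n\})$ from $\bx$ to $\by$, exactly in the spirit of the remark following the proof of Theorem \ref{thm:map}. Because $\Sigma \subset \ul R_\Sigma$ is disjoint from the twisting curve $\eta \subset \ul R_S$, every such disk contributes with local coefficient $t^{\partial_\alpha(\phi) \cdot \eta} = 1$. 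Summing over all contributions reproduces $[d(\bx) \cup \{s\} \cup \bx_\gamma, i]$ in both sub-cases of the lemma.

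For the case $(s, s') = (u_\gamma, v_\gamma)$ with $n_z(\phi) = 1$, I would imitate the proof of Lemma \ref{lem:stddisk}, Case 2, but started from disks shifted by $+\ul\Sigma$. Any such contributing disk has a domain decomposing as $D(\phi) = D' + C + (\textrm{periodic correction})$, where $C$ is the immersed bigon of Figure \ref{fig:bigon}, which satisfies $n_z(C) = 1$ and $\mu(C) = 1$ with a unique mod-$2$ holomorphic representative. The same winding analysis as in Lemma \ref{lem:stddisk} forces $a_\gamma = 0$, all $c_i = 0$, and the boundary $\partial D'$ to be disjoint from each $\alpha_i$ with $i > n$. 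Hence $D'$ is supported in $\Sigma$ and represents a Whitney disk in the sutured diagram with $\mu(D') = \mu(\phi) - \mu(C) = 0$; non-negativity of multiplicities then forces $D' = 0$ and $\by = \bx$. Since $\partial_\alpha(C)$ is a small arc of $\alpha_{n+2g+1}$ in the winding region of Figure \ref{fig:uvuv2}, we may choose $\eta \subset \ul R_S$ to be disjoint from it, so the local coefficient is again trivial and the total contribution is $[\bx \cup \{v_\gamma\} \cup \bx_\gamma, i-1]$ with coefficient $1$.

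The main obstacle is the Case 2 analysis: confirming that $\mu(C) = 1$ with a unique mod-$2$ holomorphic representative, and ruling out all other $n_z = 1$ disks after sufficient winding. The Maslov computation is a direct application of Lipshitz's combinatorial formula to the local picture near the winding, and the enumeration of contributing disks is a nearly verbatim adaptation of the winding arguments already developed in Lemmas \ref{lem:stdtri} and \ref{lem:stddisk}. Assembling the three cases yields the formula for $\partial_0$ stated in the lemma.
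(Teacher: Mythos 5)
Your proposal follows the same route as the paper's proof: decompose $\partial_0$ by the signs at $u_\gamma/v_\gamma$, convert the $\mathcal{F}$-preserving condition into an $n_z$ constraint via the filtration computations of Lemma \ref{lem:filt}, and feed the $n_z = 0$ case into Lemma \ref{lem:stddisk} Case 1, the $n_z = 1$ case into the Lemma \ref{lem:stddisk} Case 2 machinery, and dismiss the $v_\gamma \to u_\gamma$ case by positivity of $n_z$.

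Two small points in your $(u_\gamma,v_\gamma)$ case are slightly off, though easily repaired. First, writing $D(\phi) = D' + C + a_\gamma P_\gamma + \cdots$ (no $-\ul\Sigma$ this time), the multiplicities in the regions $R_1$, $R_2$ of Figure \ref{fig:periodic} are $2 - a_\gamma$ and $a_\gamma$, so positivity only gives $a_\gamma \in \{0,1,2\}$; the winding argument does not "force $a_\gamma = 0$" directly. One must separately rule out $a_\gamma = 1$ and $a_\gamma = 2$, which the paper does by observing that those cases run into the same negative-multiplicity or $\mu < 0$ dichotomy as Lemma \ref{lem:stddisk} Case 2. (Note that in Lemma \ref{lem:stddisk} Case 2 the constraint was $a_\gamma = 1$, because of the extra $-\ul\Sigma$, so "the same analysis forces $a_\gamma = 0$" is not quite the statement.) Second, after clearing the periodic domains you are left with $D' + \sum b_i S_i$ supported in $\Sigma$; it is this combination, not $D'$ alone, that has Maslov index $\mu(\phi) - \mu(C) = 0$, and the reason it vanishes is the general fact that nonnegative domains with a Maslov-index-$0$ holomorphic representative are constant — not merely non-negativity by itself. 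With these corrections, your argument matches the paper's.
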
 

\begin{proof}
Fix $\bx,\by$ as in \eqref{eqn:xy}. We will break the proof into three cases.

\noindent \emph{\underline{Case 1: $s=s'$}}. 
 Suppose the coefficient \begin{equation}\label{eqn:coeffxy}\langle\partial_0([\bx\cup\{s\}\cup \bx_{\gamma\alpha},i]),[\by\cup\{s\}\cup \bx_{\gamma\alpha},j]\rangle\end{equation}   is nonzero.  Then there is a Whitney disk \[\phi\in\pi_2(\bx\cup\{s\}\cup \bx_{\gamma\alpha},\by\cup\{s\}\cup \bx_{\gamma\alpha})\] with  $\mu(\phi)=1$, no negative multiplicities, and $n_z(\phi) = i-j\geq 0$. We can then write   \[D(\phi) = D(\phi')+ (i-j)\ul\Sigma,\] for some \[\phi'\in\pi_2(\bx\cup\{s\}\cup \bx_{\gamma\alpha},\by\cup\{s\}\cup \bx_{\gamma\alpha})\] with $n_z(\phi')=0$. By Lemma \ref{lem:efphi}, we have $2n_E(\phi')-n_F(\phi')=0$. Therefore,
\[2n_E(\phi)-n_F(\phi) =  (i-j)(2n_E(\ul\Sigma)-n_F(\ul\Sigma))=i-j.\]
 But then $i-j=0$ since since $\partial_0$ preserves $\mathcal{F}$.  Thus, \[n_z(\phi) = i-j=0.\] Lemma \ref{lem:stddisk}  in the case $k=1$ then shows that for sufficiently large $g$ and sufficient winding, $D(\phi)\subset\Sigma$ for such $\phi$.  It follows that for sufficiently large $g$ and sufficient winding, \begin{equation}\label{eqn:coeffequality}\langle\partial_0([\bx\cup\{s\}\cup \bx_{\gamma\alpha},i]),[\by\cup\{s\}\cup \bx_{\gamma\alpha},j]\rangle = \begin{cases}
\langle d(\bx),\by\rangle, & \textrm{if } j=i,\\
0,& \textrm{ otherwise,}
\end{cases}
\end{equation} as desired.

\noindent \emph{\underline{Case 2: $(s,s')=(u_{\gamma\alpha},v_{\gamma\alpha})$}}. 
Suppose the coefficient \begin{equation}\label{eqn:coeffxy2}\langle\partial_0([\bx\cup\{u_{\gamma\alpha}\}\cup \bx_{\gamma\alpha},i]),[\by\cup\{v_{\gamma\alpha}\}\cup \bx_{\gamma\alpha},j]\rangle\end{equation}   is nonzero.  Then there is a Whitney disk \[\phi\in\pi_2(\bx\cup\{u_{\gamma\alpha}\}\cup \bx_{\gamma\alpha},\by\cup\{v_{\gamma\alpha}\}\cup \bx_{\gamma\alpha})\] with $\mu(\phi)=1$,  no negative multiplicities, and   $n_z(\phi) = i-j\geq 0$. We can  write \[D(\phi) = D(\phi')+B +(i-j-1)\ul\Sigma\] for some  \[\phi'\in\pi_2(\bx\cup\{u_{\gamma\alpha}\}\cup \bx_{\gamma\alpha},\by\cup\{u_{\gamma\alpha}\}\cup \bx_{\gamma\alpha})\] with $n_z(\phi')=0$ and \[\mu(\phi')=-2(i-j-1).\] By Lemma \ref{lem:efphi}, we have $2n_E(\phi')-n_F(\phi')=0$. This then implies that 
 \begin{align*}2n_E(\phi)-n_F(\phi) &= 2n_E(B)-n_F(B) + (i-j-1)(2n_E(\ul\Sigma)-n_F(\ul\Sigma))\\
 &=i-j-1.\end{align*} But this quantity must be zero since $\partial_0$ preserves $\mathcal{F}$, so we can write \[D(\phi) = D(\phi')+B,\] and $\mu(\phi')=0$. Furthermore, $\phi'$ must have no negative multiplicities; otherwise, $\phi$  clearly would as well, a contradiction. But  Lemma \ref{lem:stddisk} in the case $k=0$ says that for sufficiently large $g$ and sufficient winding, $D(\phi')\subset\Sigma$ for all such $\phi'$. In this  case, the constituent pieces $D(\phi')$ and $B$ of $D(\phi)$ are disjoint domains with holomorphic representatives. Since holomorphic disks of Maslov index zero are constant, we have $D(\phi')=0$, which implies that\[D(\phi)=B.\] In this case, $\phi$ has a unique holomorphic representative, and $\by=\bx$. We may therefore conclude that for sufficiently large $g$ and  sufficient winding, \begin{equation}\label{eqn:coeffequality2}\langle\partial_0([\bx\cup\{u_{\gamma\alpha}\}\cup \bx_{\gamma\alpha},i]),[\by\cup\{v_{\gamma\alpha}\}\cup \bx_{\gamma\alpha},j]\rangle = \begin{cases}
1, & \textrm{if } \by=\bx  \textrm{ and } j=i-1,\\
0,& \textrm{ otherwise,}
\end{cases}
\end{equation} as desired.

\noindent \emph{\underline{Case 3: $(s,s')=(v_{\gamma\alpha},u_{\gamma\alpha})$}}.  
Suppose the coefficient \begin{equation}\label{eqn:coeffxy3}\langle\partial_0([\bx\cup\{v_{\gamma\alpha}\}\cup \bx_{\gamma\alpha},i]),[\by\cup\{u_{\gamma\alpha}\}\cup \bx_{\gamma\alpha},j]\rangle\end{equation}   is nonzero.  Then there is a Whitney disk \[\phi\in\pi_2(\bx\cup\{v_{\gamma\alpha}\}\cup \bx_{\gamma\alpha},\by\cup\{u_{\gamma\alpha}\}\cup \bx_{\gamma\alpha})\] with   $n_z(\phi) = i-j\geq 0$. Note that \[2n_E(\phi)-n_F(\phi) = i-j+1,\] as established in the proof of Lemma \ref{lem:filt} in this case. But this quantity must be zero since $\partial_0$ preserves $\mathcal{F}$, so $i-j = -1.$ But this contradicts the assumption that $i-j\geq 0$. Therefore, \begin{equation}\label{eqn:coeffequality3}\langle\partial_0([\bx\cup\{v_{\gamma\alpha}\}\cup \bx_{\gamma\alpha},i]),[\by\cup\{u_{\gamma\alpha}\}\cup \bx_{\gamma\alpha},j]\rangle = 0\end{equation} for all $i,j$ and $\bx,\by$. Putting the formulae \eqref{eqn:coeffequality}, \eqref{eqn:coeffequality2}, and \eqref{eqn:coeffequality3} together completes the proof of Lemma \ref{lem:d0}.
\end{proof}

Suppose now that $g$ is large enough and that we have wound sufficiently for the conclusions of Lemmas  \ref{lem:fchainmap}, \ref{lem:stddisk}, \ref{lem:filt}, and \ref{lem:d0} to hold. Note that the above filtration on $\cfp(\ul\Sigma,\boldsymbol\gamma,\boldsymbol\alpha,z|{-}R_{\gamma\alpha};\Gamma_{\eta})$  defines a filtration on \[\sfc(\Sigma,\{\gamma_1,\dots,\gamma_n\},\{\alpha_1,\dots,\alpha_n\})\otimes\Lambda\] by simply declaring the filtration grading of a generator $\bx$ to be equal to that of $[\bx\cup\{u_{\gamma\alpha}\}\cup \bx_{\gamma\alpha},0]$. In particular, $d=d_0$, where $d_0$ is the component of $d$ which preserves the filtration grading on the sutured Floer complex, and $f_{\gamma\alpha}$ is a filtered chain map. The $E_1$ page of the spectral sequence associated to the filtration on this sutured Floer complex is therefore simply the homology 
\[H_*(\sfc(\Sigma,\{\gamma_1,\dots,\gamma_n\},\{\alpha_1,\dots,\alpha_n\})\otimes\Lambda, d_0) = \sfh(\Sigma,\{\gamma_1,\dots,\gamma_n\},\{\alpha_1,\dots,\alpha_n\})\otimes\Lambda.\] We claim the following.

\begin{lemma}
\label{lem:qi}
The map between $E_1$ pages induced by $f_{\gamma\alpha}$, \[E_1(f_{\gamma\alpha}): \sfh(\Sigma,\{\gamma_1,\dots,\gamma_n\},\{\alpha_1,\dots,\alpha_n\})\otimes\Lambda\to H_*(\cfp(\ul\Sigma,\boldsymbol\gamma,\boldsymbol\alpha,z|{-}R_{\gamma\alpha};\Gamma_{\eta}), \partial_0),\] is an isomorphism.\end{lemma}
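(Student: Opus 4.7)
The plan is to realize $f_\gamma$ as the inclusion of a subcomplex with acyclic cokernel, which immediately yields the desired isomorphism on homology (equivalently, on $E_1$ pages). The entire argument is driven by the explicit formula for $\partial_0$ already established in Lemma \ref{lem:d0}.

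First, I will use Lemma \ref{lem:d0} to identify the image of $f_\gamma$ as a subcomplex. Let $C_0 \subset \cfp(\gamma,\alpha|R_\gamma;\Gamma_\eta)$ be the subspace spanned by the classes $[\bx \cup \{u_\gamma\} \cup \bx_\gamma, 0]$, as $\bx$ ranges over $(\gamma_1\times\dots\times\gamma_n)\cap(\alpha_1\times\dots\times\alpha_n)$. This is $\partial_0$-stable, since Lemma \ref{lem:d0} reduces on $C_0$ to $\partial_0[\bx u_\gamma \bx_\gamma, 0] = [d\bx \cup \{u_\gamma\} \cup \bx_\gamma, 0]$ — the would-be term $[\bx v_\gamma \bx_\gamma, -1]$ vanishes by the $U$-truncation convention $[\cdot,i]=0$ for $i<0$. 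Thus $f_\gamma$ gives a chain isomorphism $(\sfc(\Sigma,\{\gamma_i\},\{\alpha_i\})\otimes\Lambda, d) \xrightarrow{\cong} (C_0, \partial_0|_{C_0})$.

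Next, I will show that the quotient $Q := \cfp(\gamma,\alpha|R_\gamma;\Gamma_\eta)/C_0$ is acyclic, by decomposing it as a direct sum of acyclic mapping cones. The generators of $Q$ are $[\bx u_\gamma \bx_\gamma, i]$ for $i \geq 1$ and $[\bx v_\gamma \bx_\gamma, j]$ for $j \geq 0$. For each $i \geq 1$, let $Q_i \subset Q$ be the subspace spanned by $\{[\bx u_\gamma \bx_\gamma, i], [\bx v_\gamma \bx_\gamma, i-1]\}$ as $\bx$ varies. Lemma \ref{lem:d0} shows that each $Q_i$ is a $\partial_0$-subcomplex and that the subcomplexes $\{Q_i\}_{i\geq 1}$ give a chain-level splitting $Q = \bigoplus_{i\geq 1} Q_i$. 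Moreover, each $(Q_i,\partial_0)$ is exactly the mapping cone of $\mathrm{id} : (\sfc\otimes\Lambda, d) \to (\sfc\otimes\Lambda, d)$, which is acyclic. Hence $H_*(Q_i) = 0$ for every $i$, and so $H_*(Q) = 0$.

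Finally, the short exact sequence $0 \to C_0 \to \cfp(\gamma,\alpha|R_\gamma;\Gamma_\eta) \to Q \to 0$ together with $H_*(Q) = 0$ implies that the inclusion $C_0 \hookrightarrow \cfp(\gamma,\alpha|R_\gamma;\Gamma_\eta)$ is a quasi-isomorphism. Composing with the chain isomorphism $f_\gamma: \sfc\otimes\Lambda \cong C_0$ and passing to homology identifies $E_1(f_\gamma)$ with this quasi-isomorphism, proving the lemma. The substantive content is entirely packaged in Lemma \ref{lem:d0}, so no real obstacle remains — the one subtlety worth noting is that even if the coefficient of $[\bx v_\gamma \bx_\gamma, i-1]$ in Lemma \ref{lem:d0} were to carry an implicit local-coefficient weight $t^k$, the weight is a unit in $\Lambda$ and so the $Q_i$ remain mapping cones of unit multiples of the identity, which are still acyclic.
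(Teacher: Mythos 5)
Your proof is correct, and it takes a genuinely cleaner route to the same conclusion. Both arguments hinge entirely on Lemma~\ref{lem:d0}, but the paper organizes things more computationally: it first asserts, via the explicit differential formula, that $\bx\otimes 1$ is a $d$-cycle (resp.\ boundary) iff $[\bx\cup\{u_\gamma\}\cup\bx_\gamma,0]$ is a $\partial_0$-cycle (resp.\ boundary), and then separately establishes surjectivity by showing every $\partial_0$-cycle
\[
c=[w_i,u_\gamma,i]+[z_i,v_\gamma,i]+\dots+[w_0,u_\gamma,0]+[z_0,v_\gamma,0]
\]
is homologous to its level-zero $u_\gamma$-component, constructing the correcting chain $b=[z_i,u_\gamma,i+1]+\dots+[z_0,u_\gamma,1]$ by hand. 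You instead observe that $f_\gamma$ is a chain isomorphism onto the subcomplex $C_0$ spanned by the $[\bx\cup\{u_\gamma\}\cup\bx_\gamma,0]$ (correctly invoking the $U$-truncation to kill $[\bx\cup\{v_\gamma\}\cup\bx_\gamma,-1]$), and that the quotient $Q$ splits as $\bigoplus_{i\ge 1}Q_i$ where each $Q_i$ is the mapping cone of the identity on $\sfc\otimes\Lambda$ and hence acyclic. The short exact sequence then delivers the isomorphism in one stroke, collapsing the paper's two checks (injectivity and surjectivity of $E_1(f_\gamma)$) into the single statement $H_*(Q)=0$. The underlying acyclicity is the same; your packaging just makes the homological algebra do the bookkeeping. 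Your final caveat about unit local-coefficient weights is sound but unnecessary here, since the short bigon $C$ of Figure~\ref{fig:bigon} is disjoint from $\eta\subset\ul R_S$, so the coefficient in Lemma~\ref{lem:d0} is exactly $1$.
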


\begin{proof}
We claim that every generator of the homology \[H_*(\cfp(\ul\Sigma,\boldsymbol\gamma,\boldsymbol\alpha,z|{-}R_{\gamma\alpha};\Gamma_{\eta}), \partial_0)\] is represented by a linear combination of generators of the form $[\bx\cup\{u_{\gamma\alpha}\}\cup \bx_{\gamma\alpha},0]$. To see how the lemma follows from this claim, suppose it is true and recall that $E_1(f_{\gamma\alpha})$ is induced by the map which sends a generator $\bx$ to $[\bx\cup\{u_{\gamma\alpha}\}\cup \bx_{\gamma\alpha},0]$. In particular, this map sends a linear combination \begin{equation}\label{eqn:linc1}\bx_1\otimes r_1+\dots + \bx_k\otimes r_k,\end{equation} where the $r_i\in\Lambda$,  to the linear combination \begin{equation}\label{eqn:linc2}[\bx_1\cup\{u_{\gamma\alpha}\}\cup \bx_{\gamma\alpha},0]r_1+\dots + [\bx_k\cup\{u_{\gamma\alpha}\}\cup \bx_{\gamma\alpha},0]r_k.\end{equation} It follows easily from Lemma \ref{lem:d0} that the sum in \eqref{eqn:linc1} is a cycle (resp.\ boundary) with respect to $d=d_0$  iff the sum in \eqref{eqn:linc2} is a cycle (resp.\ boundary) with respect to $\partial_0$. This implies that $E_1(f_{\gamma\alpha})$ is an isomorphism. 

It remains to prove the claim. Given a linear combination \begin{equation}\label{eqn:linc3}w=\bx_1\otimes r_1+\dots + \bx_k\otimes r_k,\end{equation} as in \eqref{eqn:linc1}, let us use the following notation
\begin{align*}
[w,u_{\gamma\alpha},i]&:=[\bx_1\cup\{u_{\gamma\alpha}\}\cup \bx_{\gamma\alpha},i]r_1+\dots + [\bx_k\cup\{u_{\gamma\alpha}\}\cup \bx_{\gamma\alpha},i]r_k,\\
[w,v_{\gamma\alpha},i]&:=[\bx_1\cup\{v_{\gamma\alpha}\}\cup \bx_{\gamma\alpha},i]r_1+\dots + [\bx_k\cup\{v_{\gamma\alpha}\}\cup \bx_{\gamma\alpha},i]r_k.
\end{align*}
Now suppose \[c = [w_i,u_{\gamma\alpha},i] + [z_i,v_{\gamma\alpha},i] + \dots + [w_0,u_{\gamma\alpha},0] + [z_0,v_{\gamma\alpha},0]\] is a cycle with respect to $\partial_0$, where the $w_j$ and $v_j$ are linear combinations as in \eqref{eqn:linc3}. For the claim, it suffices to show that there is some \[b\in \cfp(\ul\Sigma,\boldsymbol\gamma,\boldsymbol\alpha,z|{-}R_{\gamma\alpha};\Gamma_{\eta})\] such that \begin{equation}\label{eqn:db}\partial_0b + c=[w_0,u_{\gamma\alpha},0].\end{equation} Applying the formula for $\partial_0$ in \eqref{eqn:d0}, one easily sees that the fact that $c$ is a cycle implies that \begin{align*}
dz_i&=0,\\
dz_{i-1}&=w_i,\\
dz_{i-2}&=w_{i-1},\\
\vdots&\\
dz_{0} &= w_1.\end{align*}
It therefore follows, after another application of \eqref{eqn:d0}, that  \[b=[z_i,u_{\gamma\alpha},i+1]+ \dots + [z_0,u_{\gamma\alpha},1]\] satisfies \eqref{eqn:db}. This completes the proof of the lemma.
\end{proof}

\begin{proof}[Proof of Theorem \ref{thm:iso}] 
The fact that $f_{\gamma\alpha}$ is  a quasi-isomorphism follows immediately. This is because a filtered chain map between filtered chain complexes which induces an isomorphism between the $E_1$ pages of the associated spectral sequences induces an isomorphism on homology, assuming that the filtrations are bounded from below, which they clearly are in this case (see, e.g., the proof of \cite[Proposition A.6.1]{oszbook}).
\end{proof}

\subsection{Proof of Theorem  \ref{thm:map}} 
\label{ssec:proofmap}
In preparation for the proof of Theorem \ref{thm:map}, we introduce the following notation. Let  $\Delta^i$ be the small triangle with vertices at 
\[\begin{cases}
\Theta^i, c_{\beta\alpha}^i,  c_{\gamma\alpha}^i, & \textrm{for } i=1,\dots,n,\\
\Theta^i, x_{\beta\alpha}^i,  x_{\gamma\alpha}^i, & \textrm{for } i=n{+}1,\dots,n{+}2g,\\
\Theta^{i}, u_{\beta\alpha},  u_{\gamma\alpha}, & \textrm{for } i=n{+}2g{+}1,\\
\end{cases}\]
shown shaded in Figures \ref{fig:HH2}, \ref{fig:TSarcs2}, and  \ref{fig:uvuv2}, and let 
\begin{align*}
\Delta_\Sigma&=\Delta^{1}+ \dots+ \Delta^{n},\\
\Delta_S& = \Delta^{n+1}+ \dots+ \Delta^{n{+}2g{+}1},\\
\Delta_{\ul\Sigma} &= \Delta_\Sigma+ \Delta_S.
\end{align*}
From Lemma \ref{lem:bot}, the image \[f^{+}_{\gamma\beta\alpha;\Gamma_\nu}([\bcc_{\beta\alpha}\cup\{u_{\beta\alpha}\}\cup \bx_{\beta\alpha},0])\] is a linear combination of generators of the form $[\by\cup\{s\}\cup \bx_{\gamma\alpha},0],$ where $s\in\{u_{\gamma\alpha},v_{\gamma\alpha}\}$.

As we shall see, Theorem \ref{thm:map} follows easily from Lemma \ref{lem:stdtri} below, which is a kind of Whitney triangle version of Lemma \ref{lem:stddisk}. 
 
\begin{lemma}
\label{lem:stdtri} Fix an intersection point \begin{equation}\label{eqn:y} \by\in (\gamma_1\times\dots\times\gamma_n)\cap (\alpha_1\times\dots\times\alpha_n)\in \operatorname{Sym}^n(\Sigma)\end{equation} and an integer $k$. For sufficiently large $g$ and sufficient winding, the following is true: for any Whitney triangle \begin{equation*}\label{eqn:whit}\phi\in \pi_2(\Theta, \bcc_{\beta\alpha}\cup\{u_{\gamma\alpha}\}\cup \bx_{\beta\alpha}, \by\cup\{s\}\cup \bx_{\gamma\alpha})\end{equation*} with $s\in \{u_{\gamma\alpha},v_{\gamma\alpha}\}$, where
\begin{enumerate}
\item $\mu(\phi)=k$,
\item $n_z(\phi)=0$,
\item $D(\phi)$ has no negative multiplicities, and
\item $\hfp(-W,\spt;\Gamma_\nu)([[\bcc_{\beta\alpha}\cup\{u_{\gamma\alpha}\}\cup \bx_{\beta\alpha},0]]) \neq 0$ for $\spt=\spc_z(\phi)$,  
\end{enumerate} 
we have that 
\begin{itemize}
\item $s=u_{\gamma\alpha}$,  
\item $\by = \bcc_{\gamma\alpha}$, and  
\item the domain  $D(\phi)=\Delta_{\ul\Sigma}$. 
\end{itemize}

\end{lemma}

The proof of this lemma is very similar to that of Lemma \ref{lem:stddisk}. We therefore omit some of the redundant details in our explanation below.

\begin{proof}[Proof of Lemma \ref{lem:stdtri}]  Fix  $\by$ as in \eqref{eqn:y}. We will break the proof into two cases.

\noindent \emph{\underline{Case 1: $s=u_{\gamma\alpha}$}}.    Suppose \[g\geq 2pL+2\] as in the hypothesis of Lemma \ref{lem:tech1}. Suppose $\spt\in\Sc(-W)$ is a $\Sc$ structure for which \begin{equation}\label{eqn:Wtnonzero}\hfp(-W,\spt;\Gamma_\nu)([[\bcc_{\beta\alpha}\cup\{u_{\gamma\alpha}\}\cup \bx_{\beta\alpha},0]]) \neq 0.\end{equation} In particular, this implies that $\spt\in\SA(-W|{-}R)$. Fix a Whitney triangle in \begin{equation*}\label{eqn:pi2-2}\pi_2(\Theta, \bcc_{\beta\alpha}\cup\{u_{\beta\alpha}\}\cup \bx_{\beta\alpha}, \by\cup\{u_{\gamma\alpha}\}\cup \bx_{\gamma\alpha})\end{equation*} with domain $T$ satisfying $n_z(T)=0$ and $\spc_z(T)=\spt$ (such a   triangle must exist by \eqref{eqn:Wtnonzero}, since the quantity on the left is a linear combination of generators whose coefficients count holomorphic triangles with domains satisfying these properties). The boundary of $T{-}\Delta_{S}$ consists of  
\begin{itemize}
\item integer multiples of complete $\gamma_i$, $\beta_i$,   $\alpha_i$ curves for $i=n{+}1,\dots,n{+}2g{+}1$, and
\item  integer multiples of arcs of the $\gamma_i$, $\beta_i$,  $\alpha_i$ curves for $i=1,\dots,n$.
\end{itemize} We may therefore assume, after adding some integer  linear combination of  the elements in the basis  \eqref{eqn:basisTextended}, \[\{\P_{\gamma\alpha},\D_1,\dots,\D_p,\T_{p+1}\dots,\T_m,\P_{n+1},\dots,\P_{n+2g+1}\},\] that the boundary of $T-\Delta_S$ is  disjoint from 
\begin{itemize}
\item $\beta_i$ for $i=n{+}1,\dots, n{+}2g{+}1$,
\item  $\gamma_{i_j}$ and $\alpha_{i_j}$ for $j=p{+}1,\dots,m$,
\item  $\gamma_{n+2g+1}$ and $\alpha_{n+2g+1}$,
\end{itemize} 
in analogy with the discussion in the proof of Lemma \ref{lem:stddisk}. This $T-\Delta_S$ will serve as a reference domain for the remainder of the proof.

Fix some integer $M$ such that \begin{equation}\label{eqn:choiceN}M-|k|>|\mu(T)| + \max\{|\ul n_{w_{i_j}}(T)| \mid j=1,\dots,m\}.\end{equation} (Recall once more that the  $i_j$ subscripts refer to curves in the boundaries of $\D_j$ and $\T_j $ as in \eqref{eqn:basis2} and \eqref{eqn:basisTstd}.) 
Now suppose $\phi$ is any Whitney triangle, \[\phi\in \pi_2(\Theta, \bcc_{\beta\alpha}\cup\{u_{\beta\alpha}\}\cup \bx_{\beta\alpha}, \by\cup\{u_{\gamma\alpha}\}\cup \bx_{\gamma\alpha}),\]  where $\mu(\phi)=k$, $n_z(\phi)=0$,  $D(\phi)$ has no negative multiplicities, and $\spc_z(\phi)=\spt$, as in the hypothesis of the lemma. The last condition  implies that  $D(\phi)$  differs from $T$ by a sum of doubly-periodic domains \cite[Proposition 8.5]{osz8}. We can therefore write \[D(\phi)=T+P\] for some integer linear combination $P$ of  elements of the basis in \eqref{eqn:basisTextended}. The fact $P$ is a sum of rational doubly-periodic domains implies that $\spc_z(T)=\spc_z(\phi)=\spt$ and also that
\[\mu(\phi) = \mu(T) + \langle c_1(\spt),[P]\rangle.\] Then, exactly as in the proof of Lemma \ref{lem:stddisk} (but appealing to Lemma \ref{lem:tech1} for $N=M$ and Lemma \ref{lem:tech2} for $N=1$ rather than Lemmas \ref{lem:tech13} and \ref{lem:tech23}), we can conclude using  that after a sufficient amount of winding (which does not depend on $P$), in order for $\mu(\phi)=k$ and for $D(\phi)$ to have no negative multiplicities,   $P$ must be an integer linear combination \[P=a_{\gamma\alpha} \P_{\gamma\alpha} + \sum_{i=1}^p b_i \D_i + \sum_{i=n+1}^{n+2g+1} d_i\P_i,\] such that \[T+ \sum_{i=1}^p b_i \D_i- \Delta_S\subset \Sigma\] in direct analogy with the proof of \eqref{eqn:domainsigma}. We can therefore write
 \begin{equation}\label{eqn:domainD1}D(\phi)=\Big(T + \sum_{i=1}^p b_i \D_i -\Delta_S\Big) + \Delta_S+ a_{\gamma\alpha} \P_{\gamma\alpha} +  \sum_{i=n+1}^{n+2g+1} d_i\P_i.\end{equation} Since there are only finitely many $\spt\in\Sc(-W)$ satisfying \eqref{eqn:Wtnonzero}, as discussed in Subsection \ref{ssec:cob}, this conclusion  holds  simultaneously for all such $\spt$ after sufficient winding. In summary,  for $g\geq 2pL+2$ and sufficient winding, any Whitney triangle as in the statement of the lemma has domain \begin{equation}\label{eqn:domainD}D(\phi) = D(\phi') + \Delta_S + a_{\gamma\alpha} \P_{\gamma\alpha} +  \sum_{i=n+1}^{n+2g+1} d_i\P_i,\end{equation} for integers $a_{\gamma\alpha}$ and $d_{n+1},\dots,d_{n+2g+1}$ (note that $a_{\gamma\alpha}$ must be nonnegative), where  \begin{equation}\label{eqn:phi'}\phi'\in\pi_2(\{\Theta^1,\dots,\Theta^n\},\bcc_{\beta\alpha},\by)\end{equation} is a Whitney triangle in the sutured Heegaard triple diagram \[(\Sigma,\{\gamma_1,\dots,\gamma_n\},\{\beta_1,\dots,\beta_n\},\{\alpha_1,\dots,\alpha_n\}).\]  
 
 To complete the proof of the lemma in this case, we next show that for sufficiently large  $g$ and sufficient winding it must also be true that $a_{\gamma\alpha} = d_{n+1}=\dots=d_{n+2g+1}=0$ in \eqref{eqn:domainD1} and 
that $D(\phi')=\Delta_\Sigma$. This will imply that \[D(\phi) = \Delta_\Sigma+\Delta_S = \Delta_{\ul\Sigma},\] which in particular implies that $\by = \bcc_{\gamma\alpha}$, as desired. 

We will first show that that $a_{\gamma\alpha} = d_{n+1}=\dots=d_{n+2g+1}=0$.
For this, fix some \[\phi''\in\pi_2(\{\Theta^1,\dots,\Theta^n\},\bcc_{\beta\alpha},\by).\] Fix some $N$ with \begin{equation*}\label{eqn:choiceN2}N-|k|>|\mu(\phi'')| + \max\{|\ul n_{w_i}(\phi'')| \mid i=1,\dots,n\}.\end{equation*} Let \begin{equation}\label{eqn:genuslem}g\geq \max\{2pL+2,pLN+pL+N+1\},\end{equation} and   wind so that any  Whitney triangle $\phi$ as in the statement of the lemma can be written in the form \eqref{eqn:domainD}.   Suppose $\phi$ is such a  triangle, with $\spc_z(\phi)=\spt$, and write $D(\phi)$ in this form, \[D(\phi) = D(\phi') + \Delta_S + a_{\gamma\alpha} \P_{\gamma\alpha} + \sum_{i=n+1}^{n+2g+1} d_i\P_i.\] The domains $D(\phi')$ and $D(\phi'')$ differ by a triply-periodic domain in the sutured Heegaard triple diagram. From  the equality $\Pi_{\gamma\beta\alpha} = \Pi_{\gamma\alpha}$ established in Subsection \ref{ssec:periodic}, all such domains are actually  $(\boldsymbol\gamma,\boldsymbol\alpha)$-periodic domains. So, we can write \[D(\phi') = D(\phi'') + \sum_{i=1}^p b_i \D_i\] for some integers $b_1,\dots,b_p$. We  therefore have \[D(\phi) = D(\phi'') + \Delta_S + P,\] where  \[P=a_{\gamma\alpha} \P_{\gamma\alpha} +\sum_{i=1}^p b_i \D_i + \sum_{i=n+1}^{n+2g+1} d_i\P_i.\]   Since $P$ is a sum of rational doubly-periodic domains, \[\spc_z(D(\phi'')+\Delta_S)=\spc_z(\phi)=\spt,\] which implies that \[\mu(\phi)=\mu(D(\phi'')+\Delta_S)+\langle c_1(\spt),[P]\rangle=\mu(\phi'')+\langle c_1(\spt),[P]\rangle.\] Then we may proceed exactly as in the proof of Lemma \ref{lem:stddisk}, using Lemma \ref{lem:tech2} rather than Lemma \ref{lem:tech23}, to conclude that $a_{\gamma\alpha}=0$. We therefore have that \[D(\phi) = D(\phi'') + \Delta_S + \sum_{i=1}^p b_i \D_i + \sum_{i=n+1}^{n+2g+1} d_i\P_i.\] Moreover, we must have $d_i=0$ for all $i=n{+}1,\dots,n{+}2g{+}1$ since otherwise $D(\phi)$ would have a negative multiplicity $-|d_i|$ in some thin region between $\gamma_i$ and $\beta_i$. Therefore,  we have \[D(\phi)=D(\phi'') + \Delta_S + \sum_{i=1}^p b_i \D_i=D(\phi')+\Delta_S,\] as desired. It remains to show that $D(\phi')=\Delta_\Sigma$. 

The fact that $D(\phi)$ has no negative multiplicities implies that $D(\phi')$ has no negative multiplicities. We claim  this implies that $D(\phi')=\Delta_\Sigma$. To see this, we refer the diagram in Figure \ref{fig:phiprime} below which shows the possible multiplicities of $D(\phi')$ near $\Theta^i$ and $c_{\beta\alpha}^i$. Either $c_{\gamma\alpha}^i$ is not a component of $\by$ in which case $y=-x$; or $c_{\gamma\alpha}^i$ is a component of $\by$ in which case $y=1-x$. In the first case, we must have $x=0$. But that would imply that $x-1=-1$, which would mean that $D(\phi')$ has a negative multiplicity. Therefore, $y=1-x$. But this forces $x=1$. It follows that the domain of $D(\phi')$ near $\Theta^i$ and $c_{\beta\alpha}^i$ consists just of the triangle $\Delta^i$. This implies that  $D(\phi') =\Delta_\Sigma$, as claimed, completing the proof of the lemma in this case.

\begin{figure}[ht]
\labellist
\tiny

\pinlabel $\Theta^i$ at 37 156
\pinlabel $c_{\beta\alpha}^i$ at 93 183
\pinlabel $c_{\gamma\alpha}^i$ at 115 85
\pinlabel $0$ at 67 95
\pinlabel $0$ at 126 101
\pinlabel $0$ at 12 173
\pinlabel $0$ at 170 173
\pinlabel $x$ at 81 140
\pinlabel $y$ at 97 30
\pinlabel $x-1$ at 52 182

\endlabellist
\centering
\includegraphics[width=3.7cm]{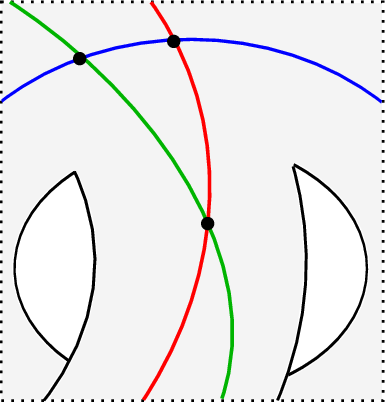}
\caption{The possible multiplicities of $D(\phi')$ near $\Theta^i$ and $c_{\beta\alpha}^i$ and $c_{\gamma\alpha}^i$.}
\label{fig:phiprime}
\end{figure}

\noindent \emph{\underline{Case 2: $s=v_{\gamma\alpha}$}}. As in the proof of Lemma \ref{lem:stddisk}, this case follows easily from the previous case. Let $B$ denote the bigon shown in Figure \ref{fig:bigon}. Observe that for any \[\phi\in\pi_2(\Theta, \bcc_{\beta\alpha}\cup\{u_{\beta\alpha}\}\cup \bx_{\beta\alpha}, \by\cup\{v_{\gamma\alpha}\}\cup \bx_{\gamma\alpha})\] as in the hypothesis of the lemma, where $\mu(\phi)=k$, we can write \begin{equation}\label{eqn:dphi-2}D(\phi) = D(\phi') + B-\ul\Sigma\end{equation} for some Whitney triangle \[\phi'\in\pi_2(\Theta, \bcc_{\beta\alpha}\cup\{u_{\beta\alpha}\}\cup \bx_{\beta\alpha}, \by\cup\{u_{\gamma\alpha}\}\cup \bx_{\gamma\alpha})\] as in the hypothesis of the lemma, where $\mu(\phi')=k+1$. We proved in the previous case that for sufficiently large $g$ and sufficient winding, any such $\phi'$ satisfies \[D(\phi')=\Delta_{\ul\Sigma}.\] Suppose  that $g$ is large enough and we have wound sufficiently that this holds, and let $\phi$ be as above. Then $D(\phi)$ has a negative multiplicity in the region $\ul R_S$ by \eqref{eqn:dphi-2}, since $B-\ul\Sigma$ has multiplicity $-1$ in this region outside of $B$, and $D(\phi')$ has multiplicity zero in this region outside of the small triangles $\Delta_S$, a contradiction. We conclude that for sufficiently large $g$ and sufficient winding, there is no Whitney triangle $\phi$ as in the statement of the lemma when $s=v_{\gamma\alpha}.$
\end{proof}

As mentioned above, Theorem \ref{thm:map} follows easily from Lemma \ref{lem:stdtri}.

\begin{proof}[Proof of Theorem \ref{thm:map}]
Since there are only finitely many $\by$ as in \eqref{eqn:y}, Lemma \ref{lem:stdtri} in the case $k=0$ tells us that for sufficiently large $g$ and sufficient winding,  \[ (f^{+}_{\gamma\beta\alpha;\Gamma_\nu})_*([[\bcc_{\beta\alpha}\cup\{u_{\beta\alpha}\}\cup \bx_{\beta\alpha},0]])=\#\mathcal{M}(\phi)\cdot[[\bcc_{\gamma\alpha}\cup\{u_{\gamma\alpha}\}\cup \bx_{\gamma\alpha},0]]\cdot t^{\partial_\alpha(\phi)\cdot \eta},\] where $\phi$ is the homotopy class of Whitney triangles with domain $\Delta_{\ul\Sigma}$. But this homotopy class has a unique holomorphic representative, and $\Delta_{\ul\Sigma}$ is entirely disjoint from $\eta$. Thus, \[\#\mathcal{M}(\phi)\cdot t^{\partial_\alpha(\phi)\cdot \eta}=1,\] completing the proof of Theorem \ref{thm:map}.
\end{proof}

\section{Obstructing  Lagrangian concordance}
\label{sec:concex}

In this section, we provide further examples which  demonstrate the effectiveness of the invariant $\Theta_{\HF}$ in obstructing Lagrangian concordance, per  Theorem \ref{thm:grid-concordance}. Our main result is the following.

\begin{theorem}
\label{thm:concordance-examples}
There are infinitely many pairs $(K_1,K_2)$ of Legendrian knots in $(S^3,\xi_{std})$ such that:
\begin{itemize}
\item $K_1$ and $K_2$ are smoothly concordant and have the same classical invariants $tb$ and $r$,
\item $K_1$ is a negative stabilization of another Legendrian knot,
\item $K_2$ is neither a positive nor negative stabilization of another Legendrian knot, and 
\item $\Theta_{\HF}(K_1) \neq 0$ while $\Theta_{\HF}(K_2) = 0$.
\end{itemize}
In particular, the last condition  implies by Theorem~\ref{thm:grid-concordance} that there is no Lagrangian concordance from $K_1$ to $K_2$.
\end{theorem}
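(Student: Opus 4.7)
The plan is to construct the infinite family by Legendrian connected sum, combining a fixed Legendrian with nonvanishing grid invariant and an infinite family of Legendrian representatives of smoothly slice knots distinguished by their grid invariants.

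First I would fix a Legendrian knot $L \subset (S^3,\xi_{std})$ with $\Theta_{HF}(L) \neq 0$ which is not a Legendrian stabilization---for example, the right-handed Legendrian trefoil at its maximal Thurston--Bennequin number. Second, I would select an infinite family of pairs $\{(L_n^+, L_n^-)\}_{n \ge 1}$ of Legendrian knots sharing classical invariants $(tb, r) = (-1, 0)$, whose underlying smooth knot types $J_n^\pm$ are smoothly slice but pairwise distinct as $n$ varies, such that $L_n^+$ is a negative Legendrian stabilization with $\Theta_{HF}(L_n^+) \neq 0$ while $L_n^-$ is nondestabilizable with $\Theta_{HF}(L_n^-) = 0$. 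Such pairs are to be built from the grid-invariant-distinguished Legendrian pairs in \cite{ost, not, kng, bald9} by further Legendrian connected sum with a fixed slice Legendrian knot, arranged so that the ``move into sliceness'' preserves the stabilization, non-stabilization, and grid invariant conditions on the respective factors.

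Setting $K_1^{(n)} := L \# L_n^+$ and $K_2^{(n)} := L \# L_n^-$, the required properties follow from standard formulas for Legendrian connected sum: classical invariants are additive, so $(tb,r)(K_1^{(n)}) = (tb,r)(K_2^{(n)})$; $K_1^{(n)}$ is a negative Legendrian stabilization because $L_n^+$ is; and $K_2^{(n)}$ is nondestabilizable by additivity of the maximal Thurston--Bennequin number under connected sum (Etnyre--Honda), using that both $L$ and $L_n^-$ realize the maximal $tb$ of their respective smooth knot types. The K\"unneth formula for the Legendrian grid invariant yields $\Theta_{HF}(K_1^{(n)}) = \Theta_{HF}(L) \otimes \Theta_{HF}(L_n^+) \neq 0$ and $\Theta_{HF}(K_2^{(n)}) = \Theta_{HF}(L) \otimes \Theta_{HF}(L_n^-) = 0$. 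Smooth concordance in each pair holds because $J_n^+$ and $J_n^-$ are smoothly slice, so $K_1^{(n)}$ and $K_2^{(n)}$ are both smoothly concordant to $L$. The pairs are not smoothly isotopic since $J_n^+ \neq J_n^-$ as smooth knots, and distinct $n$ produce distinct pairs because the $J_n^\pm$ are pairwise distinct. Theorem \ref{thm:grid-concordance} then obstructs any Lagrangian concordance from $K_1^{(n)}$ to $K_2^{(n)}$ for each $n$.

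The main obstacle will be producing the family $\{(L_n^+, L_n^-)\}$ with all properties holding simultaneously---in particular, exhibiting infinitely many smoothly slice knot types supporting Legendrian representatives of precisely the required kind. Iterating Legendrian connected sum with a single ``building block'' pair of the grid-invariant-distinguished form, while carefully bookkeeping both the K\"unneth behavior of $\Theta_{HF}$ and the additivity of the maximal $tb$, should yield the desired family; verifying nondestabilization of the second factor at each stage is the subtle point in the construction.
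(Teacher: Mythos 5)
Your overall blueprint (Legendrian connected sum, additivity of classical invariants and of $\overline{tb}$, the K\"unneth formula for $\Theta_{HF}$) is the same toolbox the paper uses, but the proposal has a genuine gap where the real content lies, and the specific plan you propose for filling it cannot work.

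You explicitly defer the construction of the pairs $(L_n^+,L_n^-)$, and the mechanism you suggest --- ``further Legendrian connected sum with a fixed slice Legendrian knot, arranged so that the `move into sliceness' preserves the stabilization, non-stabilization, and grid invariant conditions'' --- is impossible: connected sum with a slice knot is the \emph{identity} on the smooth concordance group, so it cannot convert a non-slice knot type (such as $m(10_{132})$ from \cite{not}, or the examples in \cite{ost,kng,bald9}) into a slice one. Since that step fails, the proposal reduces to asserting the existence of an infinite family $(L_n^+,L_n^-)$ with essentially all the properties demanded of $(K_1,K_2)$ in the theorem, which is circular. Also note that you only ask $L_n^-$ to be nondestabilizable, but the Etnyre--Honda additivity argument for nondestabilizability of the connected sum needs the stronger property that $L_n^-$ realizes $\overline{tb}$ of its knot type.

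The paper avoids all of this by \emph{not} asking for slice knot types at all: $K_1$ and $K_2$ represent $C_{3,2;3,2}(U)$ and $C_{3,2;3,2}(K)\# 6_1$ respectively (with $K$ slice, $\overline{tb}(K)=-1$), which are smoothly concordant to each other but not slice. The value is that $\Theta_{HF}(K_1)\ne 0$ can be imported from the explicit Ng--Ozsv\'ath--Thurston computation for $C_{3,2;3,2}(U)$, while $\Theta_{HF}(K_2)=0$ follows from V\'ertesi's K\"unneth formula together with the vanishing of $\Theta_{HF}$ for the $tb$-maximizing $6_1$, which is established using the alternating/thin criterion of \cite[Proposition~3.4]{not} and $\tau(6_1)=0$. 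The matching of classical invariants and the nondestabilizability of $K_2$ rest on Lidman--Sivek's $\overline{tb}$ formula for cables and Etnyre--Honda's $\overline{tb}$ additivity, and infinitude comes from varying $K$ over the slice pretzel knots $P(-m,-3,3)$. None of these ingredients --- iterated cabling over slice knots, the Lidman--Sivek $\overline{tb}$ computation, or the alternating-knot vanishing criterion --- appear in your proposal, and they are precisely what carry the proof.
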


The fact that $K_1$ in Theorem \ref{thm:concordance-examples} is a stabilization implies that its Legendrian contact homology DGA is trivial. Legendrian contact homology therefore fails to obstruct a Lagrangian concordance from $K_1$ to $K_2$ for these examples.

In our proof of Theorem \ref{thm:concordance-examples} below, we will adopt the  convention that the $(p,q)$-cable of a knot $K$, denoted by $C_{p,q}(K)$, has longitudinal winding $p$ and meridional winding $q$. We will also, for notational convenience, we will denote the $(r,s)$-cable of $C_{p,q}(K)$ simply by \[C_{p,q;r,s}(K):=C_{r,s}(C_{p,q}(K)).\] Given a Legendrian knot $K$, we will use $K^+$ to denote its transverse pushoff which satisfies \[sl(K^+) = tb(K)-r(K).\]  Finally, given a  knot $K$ we will denote by $\maxtb(K)$ and $\maxsl(K)$ the maximal Thurston-Bennequin and self-linking numbers among Legendrian and transverse knots   smoothly isotopic to $K$.

\begin{proof}[Proof of Theorem \ref{thm:concordance-examples}]
In our examples, $K_1$ will be a  Legendrian representative of the iterated torus knot $C_{3,2;3,2}(U)$ and $K_2$ will be a Legendrian representative of \[C_{3,2;3,2}(P(-m,-3,3)) \# 6_1,\] where $P(-m,-3,3)$ is the usual pretzel knot, for any $m \geq 3$. Such $K_1$ and $K_2$ are smoothly concordant as  these pretzel knots and the twist knot $6_1$ are all smoothly slice. 

To define the Legendrian representative $K_1$, we  rely on the following result of Ng, Ozsv{\'a}th, and Thurston from  \cite[Section~3.3]{not}. The Legendrian $K$ in the proposition below was first discovered and studied by Etnyre and Honda in \cite{EH4}.

   \begin{proposition}[Ng--Ozsv{\'a}th--Thurston]
\label{prop:cable-trefoil-example}
There is a Legendrian representative $K$ of  the iterated torus knot $C_{3,2;3,2}(U)$ with $(tb(K), r(K)) = (5,2)$ and $\Theta_{\HF}(K) \neq 0$.
\end{proposition}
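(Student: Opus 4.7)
The plan is to produce a concrete grid diagram realizing the prescribed Legendrian knot and then verify nonvanishing of the associated grid invariant. First I would recall that $C_{3,2;3,2}(U)$ admits a Legendrian representative with $(tb,r)=(5,2)$ thanks to the classification of Legendrian cables of the right-handed trefoil by Etnyre and Honda. In particular, since $\maxtb(C_{3,2}(U))=1$ (giving transverse self-linking number at most $1$ for the inner trefoil), Etnyre--Honda's formula for Legendrian $(3,2)$-cables of a Legendrian knot yields representatives of $C_{3,2;3,2}(U)$ with $tb=5$, and among these the extremal rotation number $r=2$ is realized by a unique such Legendrian up to Legendrian isotopy. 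I would reproduce their front projection for this representative $K$.

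Next, I would convert this front into an explicit grid diagram $G$. Concretely, one takes a grid diagram for $T(3,2)$, uses the standard cabling recipe on grid diagrams (repeating the strands in a prescribed pattern and inserting the $(3,2)$ pattern grid) to obtain a grid diagram for $C_{3,2;3,2}(U)$, and then adjusts so that the resulting Legendrian lift has $tb(G)=5$ and $r(G)=2$, computed from the diagram via the formulas $tb(G)=w(G)-\tfrac{1}{2}c(G)$ and $r(G)=\tfrac{1}{2}(c_d(G)-c_u(G))$ applied to its front. This gives us the grid $G$ together with its canonical generator $x^+(G)\in\widehat{GC}(G)$ whose homology class is $\Theta_{HF}(K)$.

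Finally, I would establish $\Theta_{HF}(K)\neq 0$. The cleanest route is to pin down the Alexander and Maslov gradings of $x^+(G)$: from the formulas in \cite{oszt} one computes the Alexander grading $A(x^+(G))=\tfrac{1}{2}(tb(G)-r(G)+1)=2$, which agrees with the top Alexander grading of $\widehat{HFK}(-S^3,C_{3,2;3,2}(U))$. Since for any fibered knot the top Alexander graded piece of $\widehat{HFK}$ is one-dimensional and is generated by the contact class associated with the fibered monodromy, it suffices to identify $x^+(G)$ with this distinguished generator. This identification can be made by invoking (or performing) a direct computation of $\widehat{HFK}$ in the top Alexander grading for this cable — which, as in \cite{not}, can be carried out by a finite check on the grid complex since only finitely many generators lie in the relevant grading — and then observing that $x^+(G)$ is the unique generator in grading $(A,M)=(2,\mu(G))$ for the appropriate Maslov level.

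The main obstacle is precisely this last step: certifying that $x^+(G)$ represents the nontrivial class in the top Alexander graded summand. I would expect to carry this out by reducing to a finite, computer-verifiable check on the subcomplex of $\widehat{GC}(G)$ in Alexander grading $2$, exactly as done by Ng, Ozsváth, and Thurston in their study of this example; alternatively, one can appeal to the fact that $C_{3,2;3,2}(U)$ is fibered and that $K$ is the transverse pushoff of a Legendrian with $tb-r=\maxsl$, which forces the transverse invariant $\Theta_{HF}(K^+)=\Theta_{HF}(K)$ to be nonzero by the invariance and nonvanishing of the contact class in the top summand of $\widehat{HFK}$ of a fibered knot supporting a tight contact structure.
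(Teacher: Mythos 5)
The paper does not prove this proposition at all: it is quoted from \cite[Section~3.3]{not}, where the nonvanishing of $\Theta_{HF}$ is established by a direct computation with an explicit grid diagram, and the Legendrian $K$ itself is the Etnyre--Honda example from \cite{EH4}. Your proposal attempts an actual proof, but its key step contains a genuine error. You compute the Alexander grading of the distinguished grid generator as $A=\tfrac{1}{2}(tb-r+1)=\tfrac{1}{2}(sl(K^+)+1)=2$ and then assert that this ``agrees with the top Alexander grading of $\HFKh(-S^3,C_{3,2;3,2}(U))$.'' It does not: $g(C_{3,2;3,2}(U))=4$, so the top Alexander grading is $4$, and the class $\Theta_{HF}(K)$ lives strictly below it. Consequently the fibered-knot argument (``the top Alexander summand is one-dimensional and generated by the contact class'') simply does not apply to this class, and your fallback argument fails for the same reason: the representative $K$ has $sl(K^+)=tb-r=3$, whereas $\maxsl(C_{3,2;3,2}(U))=7=2g-1$ is realized by the $(tb,r)=(6,-1)$ representative, not by $K$. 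So neither of the two ``clean'' routes you propose can certify $\Theta_{HF}(K)\neq 0$.

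What remains of your argument is the finite check on the grid complex, which you defer to ``exactly as done by Ng, Ozsv{\'a}th, and Thurston'' --- i.e., to the very computation the proposition is citing --- and even there you frame it as a check in the top Alexander grading, which is the wrong summand (it must be carried out in Alexander grading $2$). Note also that this computation genuinely cannot be avoided by matching classical invariants: there are two Legendrian representatives with $(tb,r)=(5,2)$ (the Etnyre--Honda one and a positive stabilization of a $tb=6$ representative), and they are distinguished precisely by the vanishing or nonvanishing of the invariant. So a correct proof must either reproduce the explicit grid computation of \cite{not} for the specific diagram representing the Etnyre--Honda Legendrian, or supply some other argument pinning down the class in Alexander grading $2$; as written, your proposal does neither.
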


We then define $K_1$ to be the Legendrian knot obtained by negatively stabilizing  this knot $K$  three times. It follows that \[(tb(K_1),r(K_1))=(2,-1)\textrm{ and }\Theta_{\HF}(K_1) \neq 0\] since $\Theta_{\HF}$ is preserved by negative stabilization.

We now record some facts that will be relevant in defining $K_2$. First, we record that $\maxsl(C_{3,2;3,2}(U)) = 7$, where this maximal self-linking number is realized by the transverse pushoff of the unique Legendrian representative with $(tb,r)=(6,-1)$; see \cite{EH4} for the full Legendrian classification of $C_{3,2;3,2}(U)$.  Since $g(C_{3,2;3,2}(U)) = 4$, the three inequalities \[ \maxsl(C_{3,2;3,2}(U)) \leq 2\tau(C_{3,2;3,2}(U))-1 \leq 2g_s(C_{3,2;3,2}(U))-1 \leq 2g(C_{3,2;3,2}(U))-1 \] are actually equalities. Here, $\tau$ is the Ozsv\'ath--Szab\'o concordance invariant \cite{osz10} and $g_s$ is the smooth slice genus; see \cite{pla4} for the first inequality and \cite{osz10} for the second.  In particular, we have that  $\tau(C_{3,2;3,2}(U))=4$. We now use these facts to prove the following.

\begin{lemma}
\label{lem:tb-iterated-cable}
Suppose $K$ is a smoothly slice knot with $\maxtb(K)=-1$.  Then there is a Legendrian representative of $C_{3,2;3,2}(K)$ with $(tb,r)=(6,-1)$.  This Legendrian knot achieves the bound $\maxtb(C_{3,2;3,2}(K)) = 6$, and its transverse pushoff achieves the bound $\maxsl(C_{3,2;3,2}(K)) = 7$.
\end{lemma}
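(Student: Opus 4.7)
The plan is to construct the desired Legendrian by iterated Etnyre--Honda cabling, and then to verify sharpness of $\maxtb$ and $\maxsl$ by transferring the $\tau$-invariant across the slice concordance from $K$ to the unknot.

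First I would select a Legendrian representative $\Lambda_K$ of $K$ with $tb(\Lambda_K)=-1$.  Since $K$ is smoothly slice we have $g_s(K)=0$, so the slice Bennequin inequality $tb+|r|\le 2g_s(K)-1=-1$ forces $r(\Lambda_K)=0$.  Now apply the Etnyre--Honda Legendrian cabling construction (\emph{Cabling and transverse simplicity}) twice.  For the first iteration the cable slope $2/3$ exceeds $\maxtb(K)=-1$, so we are in the nontrivial regime and obtain a Legendrian representative of $C_{3,2}(K)$ with $(tb,r)=(1,0)$, exactly as in the standard construction of the max-$tb$ $T_{3,2}=C_{3,2}(U)$.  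For the second iteration the intermediate companion has $\maxtb=1$ while the cable slope is still $2/3 < 1$, placing us in the ``easy'' cabling regime, where Legendrian cables on the standard contact neighborhood achieve $tb=pq=6$.  Among the finitely many resulting Legendrians, distinguished only by rotation number, we select the one with $r=-1$, mirroring the choice made in Proposition~\ref{prop:cable-trefoil-example} for $C_{3,2;3,2}(U)$.

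For the upper bounds, since $K$ is slice it is smoothly concordant to the unknot, and hence $C_{3,2;3,2}(K)$ is smoothly concordant to $C_{3,2;3,2}(U)$.  Concordance invariance of $\tau$ gives $\tau(C_{3,2;3,2}(K))=\tau(C_{3,2;3,2}(U))=4$.  Plamenevskaya's inequality $\maxsl\le 2\tau-1$ then gives $\maxsl(C_{3,2;3,2}(K))\le 7$, and this is saturated by the transverse pushoff of the constructed Legendrian, which satisfies $sl=tb-r=6-(-1)=7$.  A parallel application of Plamenevskaya's inequality to $C_{3,2}(K)$, using $\tau(C_{3,2}(K))=\tau(T_{3,2})=1$, gives $\maxtb(C_{3,2}(K))\le 1$, and together with the first cabling step this yields $\maxtb(C_{3,2}(K))=1$.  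Finally, Etnyre--Honda's cabling formula in the easy regime ($q/p<\maxtb$ of the companion) supplies the matching upper bound $\maxtb(C_{3,2;3,2}(K))\le pq=6$.

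The main obstacle is the rotation-number bookkeeping in the second cabling step: the Etnyre--Honda construction produces several Legendrians realizing $tb=6$, parametrized by their rotation numbers, and one must consult their explicit formulas to confirm that $r=-1$ is realized.  However, because these formulas depend only on data local to a contact neighborhood of the companion Legendrian, the verification is formally identical to the unknot case already treated in Proposition~\ref{prop:cable-trefoil-example}, so no new computation is required beyond citing that proposition.
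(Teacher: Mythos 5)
Your approach diverges from the paper's in the key step of constructing the maximal-$tb$ Legendrian cable, and the divergence introduces a gap. You invoke the Etnyre--Honda Legendrian cabling construction to build a representative of $C_{3,2}(K)$ with $(tb,r)=(1,0)$ and then of $C_{3,2;3,2}(K)$ with $tb=6$. But the Etnyre--Honda results on cabling (and the attendant slope dichotomy you invoke) are established under a \emph{uniform thickness} hypothesis on the companion, which is verified for the unknot and its iterated torus knots but is \emph{not} automatic for an arbitrary slice knot $K$ with $\maxtb(K)=-1$. Your first cabling step, where the slope $2/3$ exceeds $\maxtb(K)=-1$, is exactly the ``hard'' regime where Etnyre--Honda's conclusions are sensitive to the full Legendrian classification of the companion; asserting that one ``obtains a Legendrian representative of $C_{3,2}(K)$ with $(tb,r)=(1,0)$, exactly as in the standard construction of $T_{3,2}$'' transfers a statement about the unknot to a general $K$ without justification. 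The paper avoids this entirely by citing a general lower bound (Corollary~1.17 of \cite{lidsiv}) which gives $\maxtb(C_{3,2}(K))\geq 1$ for any such $K$; combined with the $\tau$-based upper bound this pins down $\maxtb(C_{3,2}(K))=1$, and a second application of the same corollary gives $\maxtb(C_{3,2;3,2}(K))=6$.

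A second, smaller issue is the rotation number. You propose to ``consult the explicit formulas'' and appeal to the analogy with Proposition~\ref{prop:cable-trefoil-example}, but that proposition concerns the unknot and does not by itself tell you what rotation numbers occur for cables of a general $K$. The paper instead uses a cleaner parity argument that requires no bookkeeping: any $tb$-maximizing Legendrian representative has $tb=6$ even, hence $r$ odd, and after possibly reversing orientation we may take $r\leq -1$; then $sl = tb - r \geq 7$, which combined with the Plamenevskaya bound $\maxsl \leq 2\tau -1 = 7$ forces $r=-1$. This simultaneously produces the Legendrian with $(tb,r)=(6,-1)$ and shows $\maxsl=7$ is achieved, with no reference to the structure of the cabling construction. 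Your $\tau$-computations (via concordance invariance and sliceness of $K$) and the Plamenevskaya inequalities are correct and match the paper's; the gap is confined to the existence half of the argument, where you need a general-companion cabling result in place of Etnyre--Honda.
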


\begin{proof}
According to \cite[Corollary~1.17]{lidsiv}, we have $\maxtb(C_{3,2}(K)) \geq 1$.  As $C_{3,2}(K)$ is smoothly concordant to the right-handed trefoil $C_{3,2}(U)$, we have \[\tau(C_{3,2}(K)) = \tau(C_{3,2}(U)) = 1,\] which implies that \[\maxtb(C_{3,2}(K)) \leq 2\tau(C_{3,2}(K))-1 = 1\] by \cite{pla4}.  It follows that $\maxtb(C_{3,2}(K)) = 1$. Applying  \cite[Corollary~1.17]{lidsiv} once more, we may then conclude that $\maxtb(C_{3,2;3,2}(K)) = 6$, as claimed in  the lemma.  Since $C_{3,2;3,2}(K)$ is smoothly concordant to $C_{3,2;3,2}(U)$, we have that \[\tau(C_{3,2;3,2}(K)) = \tau(C_{3,2;3,2}(U))=4,\] and, therefore, that \[\maxsl(C_{3,2;3,2}(K)) \leq 2\tau(C_{3,2;3,2}(K))-1 = 7.\]  This bound is achieved by the transverse pushoff of a $tb$-maximizing Legendrian representative: since $tb=6$ is even for this representative its  rotation number must be odd, and up to reversing orientation we can ensure that $r\leq -1$; hence, the transverse pushoff has $sl=tb-r\geq7$, which implies that this inequality is actually an equality.
\end{proof}

There are infinitely many knots $K$ satisfying the hypothesis of Lemma~\ref{lem:tb-iterated-cable}. These include  the examples of \cite[Theorem~2.10]{cns} and the pretzel knots $P(-m,-3,3)$, for $m \geq 3$, as mentioned in \cite[Section~4.4]{cns}.  Fix any such $K$ and  let $L_1$ be a Legendrian representative of $C_{3,2;3,2}(K)$ with $tb(L_1) = 6$ and $r(L_1) = -1$, whose existence is guaranteed by Lemma \ref{lem:tb-iterated-cable}.

Etnyre, Ng, and V\'ertesi \cite{env} classified Legendrian and transverse representatives of the $6_1$ knot, which in their notation is the twist knot $K_4$. Namely, there is a single $tb$-maximizing Legendrian representative $L_2$ with $(tb,r)=(-5,0)$, and all other representatives are stabilizations of $L_2$, so it follows that $\maxtb(6_1) = -5$ and $\maxsl(6_1) = -5$.

We now define the Legendrian representative $K_2$ of $C_{3,2;3,2}(K)\#6_1$ to be the connected sum $K_2=L_1\#L_2.$ We show below that $K_2$ satisfies the conditions in Theorem \ref{thm:concordance-examples}. 

\begin{proposition}
\label{prop:construct-k-vanishing-grid}
The Legendrian knot $K_2$ has the same classical invariants as $K_1$, it is not a stabilization, and $\Theta_{\HF}(K_2) = 0$.
\end{proposition}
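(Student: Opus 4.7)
The plan is to treat the three claims in sequence, starting with the simplest.  For the classical invariants, we will apply the additivity of Thurston--Bennequin and rotation numbers under Legendrian connected sum,
\[ tb(L_1 \# L_2) = tb(L_1) + tb(L_2) + 1, \qquad r(L_1 \# L_2) = r(L_1) + r(L_2), \]
which immediately give $tb(K_2) = 6 + (-5) + 1 = 2$ and $r(K_2) = -1 + 0 = -1$, matching the values $(tb(K_1), r(K_1)) = (2, -1)$ recorded earlier for $K_1$.

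To show $K_2$ is not a stabilization, the strategy is to prove that $K_2$ realizes $\maxtb$ of its underlying knot type, from which non-destabilizability is automatic (stabilization strictly decreases $tb$).  Lemma~\ref{lem:tb-iterated-cable} gives $\maxtb(C_{3,2;3,2}(K)) = 6$ and the Etnyre--Ng--V\'ertesi classification \cite{env} gives $\maxtb(6_1) = -5$, so $L_1$ and $L_2$ are each $tb$-maximal in their own knot types.  Combining additivity of $tb$ with the Etnyre--Honda theorem that every Legendrian representative of a connected sum in $(S^3,\xi_{std})$ decomposes as a Legendrian connected sum of representatives of the summands, we obtain the bound
\[ \maxtb(C_{3,2;3,2}(K) \# 6_1) \leq \maxtb(C_{3,2;3,2}(K)) + \maxtb(6_1) + 1 = 2 = tb(K_2), \]
with equality realized by $K_2$ itself, yielding the claim.

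The main and hardest task is to prove $\Theta_{HF}(K_2) = 0$.  The approach is to invoke the connected sum formula for the Heegaard Floer Legendrian invariant: under the K\"unneth isomorphism
\[ \HFKh(-S^3, C_{3,2;3,2}(K) \# 6_1) \cong \HFKh(-S^3, C_{3,2;3,2}(K)) \otimes \HFKh(-S^3, 6_1), \]
one has $\Theta_{HF}(L_1 \# L_2) = \Theta_{HF}(L_1) \otimes \Theta_{HF}(L_2)$ up to a unit; this is inherited from the analogous formula for the LOSS invariant via the equivalence of Theorem~\ref{thm:equivleginv}.  It will then suffice to produce the single vanishing $\Theta_{HF}(L_2) = 0$ for the unique $tb$-maximal Legendrian representative $L_2$ of $6_1$.

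The principal difficulty lies precisely in this last step.  My plan is to extract the required vanishing from the Legendrian invariant computations for twist knots carried out by Etnyre--Ng--V\'ertesi in \cite{env}, where the hat Legendrian class of the $tb$-maximal representative of $6_1$ is identified and shown to vanish in $\HFKh(-S^3,6_1)$; equivalently, one can perform the computation directly on a grid diagram for $L_2$ using the combinatorial definition of $\Theta_{HF}$.  Once this single vanishing is secured, the K\"unneth formula forces $\Theta_{HF}(K_2) = \Theta_{HF}(L_1) \otimes 0 = 0$, completing the proof of all three assertions of the proposition.
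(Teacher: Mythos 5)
Your first two steps match the paper: you prove the classical invariants agree via additivity of $tb$ and $r$ under Legendrian connected sum, and you establish non-destabilizability by showing $K_2$ realizes $\maxtb$ of its knot type using the Etnyre--Honda connected sum classification. Your reduction to showing $\Theta_{HF}(L_2)=0$ via the connected sum formula (which is essentially V\'ertesi's \cite[Corollary~1.3]{vera}, cited directly in the paper) also agrees.

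Where you diverge is the last and most substantive step: proving $\Theta_{HF}(L_2)=0$ for the $tb$-maximizing Legendrian representative of $6_1$. You propose to cite a computation of the LOSS/grid invariant for $6_1$ in Etnyre--Ng--V\'ertesi \cite{env}, or alternatively to compute directly from a grid diagram. The citation is problematic: \cite{env} is a classification paper whose Floer-theoretic content is minimal --- it does not identify $\Theta_{HF}(L_2)$ as zero in $\HFKh(-S^3,6_1)$, so the ``extract the required vanishing'' step as written does not go through. The direct-grid fallback is sound in principle but is left unexecuted. The paper instead uses a clean general criterion: since $6_1$ is alternating, it has thin knot Floer homology \cite{osz9}, and by \cite[Proposition~3.4]{not} the grid invariant of a Legendrian representative of a Floer-thin knot is nonzero if and only if $sl(L^+)=2\tau(L)-1$. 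Here $sl(L_2^+)=tb(L_2)-r(L_2)=-5$, while $\tau(6_1)=0$ since $6_1$ is slice, so $2\tau-1=-1\neq -5$ and hence $\Theta_{HF}(L_2)=0$ with no computation at all. You should replace the appeal to \cite{env} with this thinness argument (or actually carry out the grid computation you suggest) to close the gap.
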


\begin{proof}
Both $L_1$ and $L_2$ maximize $tb$ within their knot types, so we have
\[ tb(K_2) = tb(L_1)+tb(L_2)+1 = \maxtb(L_1)+\maxtb(L_2)+1 = \maxtb(K_2) \]
by Lemma 3.3 and Corollary 3.5 of \cite{EH5}.  More precisely, we compute that $tb(K_2) = 2$ and \[r(K_2) = r(L_1)+r(L_2) = -1.\] This shows that $K_2$ has the same classical invariants as $K_1$. The fact that $K_2$ is a $tb$-maximizer also implies that it is not a stabilization. In order to show that $\Theta_{\HF}(K_2)=0$, we appeal to a result of V\'ertesi \cite[Corollary~1.3]{vera}, which says that there is an isomorphism
\[ \HFKh(m(L_1)) \otimes \HFKh(m(L_2)) \to \HFKh(m(L_1\#L_2)) \]
sending \[\Theta_{\HF}(L_1) \otimes \Theta_{\HF}(L_2)\textrm{ to }\Theta_{\HF}(L_1 \# L_2) = \Theta_{\HF}(K_2).\]  It therefore suffices to show that $\Theta_{\HF}(L_2) = 0$.  But $L_2$ represents an alternating knot type, so it has thin knot Floer homology \cite{osz9}. Therefore, by \cite[Proposition~3.4]{not}, we have $\Theta_{\HF}(L_2) \neq 0$ if and only if $sl(L_2^+) = 2\tau(L_2) - 1$.  The left side is $-5$, but the right side is $-1$ since $L_2$ is smoothly slice, so we have that $\Theta_{\HF}(L_2)=0$, as desired.
\end{proof}

This completes the proof of Theorem \ref{thm:concordance-examples}.
\end{proof}

\bibliographystyle{hplain}
\bibliography{References}

\end{document}